\author{Krzysztof Paczka\thanks{Centre of Mathematics for Applications, University of Oslo, Norway, e-mail address: k.j.paczka@cma.uio.no.}}
\title{On the properites of Poisson random measures associated with a $G$-L\'evy process\thanks{The research leading to these results has received funding from the European Research Council under the European Community's Seventh Framework Programme (FP7/2007-2013) / ERC grant agreement no [228087].}}
\newtheorem{tw}{Theorem}
\newtheorem{defin}[tw]{Definition}
\newtheorem{ass}{Assumption}
\newtheorem{lem}[tw]{Lemma}
\newtheorem{cor}[tw]{Corollary}
\newtheorem{prop}[tw]{Proposition}
\theoremstyle{definition}
\newtheorem{rem}[tw]{Remark}
\DeclareMathOperator*{\tr}{tr}
\begin{document}
\def\r0{\mathbb{R}^d_0}
    \def\E{\mathbb{E}}
    \def\GE{\hat{\mathbb{E}}}
    \def\cadlag{c\`{a}dl\`{a}g }
        \def\cliprd{C_{b,Lip}(\mathbb{R}^d)}
    \def\cliprn{C_{b,Lip}(\mathbb{R}^n)}
    \def\cliprdn{C_{b,Lip}(\mathbb{R}^{d\times n})}
    \def\cliprm{C_{b,Lip}(\mathbb{R}^m)}
    \def\cp{C^{\infty}_{p}(\mathbb{R}^n)}
    \def\clipr{C_{b,Lip}(\mathbb{R})}
    \def\lipt{Lip(\Omega_t)}
        \def\lipT{Lip(\Omega_T)}
    \def\lip{Lip(\Omega)}
    \def\qB{\langle B\rangle}
    \def\P{\mathbb{P}}
    \def\D{\mathbb{D}}
    \def\I{\mathds{1}}
    \def\N{\mathbb{N}}
        \def\n{\mathcal{N}}
    \def\L12{\mathbb{L}^{1,2}}
    \def\R{\mathbb{R}}
    \def\ae{\mathcal{A}_{\EE}}
    \def\Z{\mathbb{Z}}
        \def\A{\mathcal{A}}
    \def\H{\mathcal{H}}
        \def\tl{\tau_{\lambda}}
     \def\G{\mathcal{G}}
          \def\a{\mathcal{A}}
    \def\C{\mathbb{C}}
        \def\L{\mathbb{L}}
    \def\Q{\mathbb{Q}}
        \def\q{\mathcal{Q}}
    \def\S{\mathcal{S}}
        \def\s{\mathbb{S}}
    \def\B{\mathcal{B}}
        \def\v{\mathcal{V}}
	\def\u{\mathcal{U}}
    \def\b{\mathbb{B}}
    \def\fil{\mathbb{F}}
    \def\F{\mathcal{F}}
    \def\EE{\mathcal{E}}
    \def\m{\mathcal{M}}
    \def\p{\mathcal{P}}
    \def\ito{It\^o }
    \def\levy{L\'evy }
    \def\itolevy{It\^o-L\'evy }
        \def\levykhintchine{L\'evy-Khintchine }
    \def\dom{Dom\,\delta}
    \def\LG{L^2_G(\Omega)}
    \def\LGp{L^p_G(\Omega)}
    \def\LGT{L^2_G(\Omega_T)}
        \def\LGpl{L^2_{G_{\epsilon}}(\mathcal{F}_T)}
    \def\lgT{L^2_G(0,T)}
     \def\mgT{M^2_G(0,T)}
         \def\hSTR{\mathcal{H}^S_G([0,T]\times\mathbb{R}^d_0)}
    \def\hgTR{\mathcal{H}^2_G([0,T]\times\mathbb{R}^d_0)}
        \def\hgTRo{\mathcal{H}^1_G([0,T]\times\mathbb{R}^d_0)}
	\def\hhR{\hat{\mathcal{H}}^2_G([0,T]\times\mathbb{R}^d_0)}
    \def\hgT{\mathcal{H}^2_G(0,T)}
    \def\mg1T{M^1_G(0,T)}
	\def\Mg1T{\mathcal{M}^1_G(0,T)}
    \def\hMT{\hat{\mathcal{M}}^2_G(0,T)}
    \def\hM1T{\hat{\mathcal{M}}^1_G(0,T)}
    \def\hMTp{\hat{\mathcal{M}}^p_G(0,T)}
    \def\mgT{{M}^1_G(0,T)}
    \def\d0{\mathbb{D}(\R^+,\R^{d})}
    \def\da{\mathbb{D}(\R^+,\R^{2d})}

\maketitle
\begin{abstract}
In this paper we study the properties of the Poisson random measure and the Poisson integral associated with a $G$-\levy process. We prove that a Poisson integral is a $G$-\levy process and give the conditions which ensure that a Poisson integral belongs to a good space of random variables. In particular, we study the relation between the quasi-continuity of an integrand and the quasi-continuity of the integral. Lastly, we apply the results to establish the pathwise decomposition of a $G$-\levy process into a generalized $G$-Brownian motion and a pure-jump $G$-\levy process and prove that both processes belong to a good space of random variables. 
	
	\noindent\textbf{Mathematics Subject Classification 2000:}  60H05, 60H10, 60G51.

\noindent\textbf{Key words:}  $G$-\levy process, \ito calculus, quasi-continuity, non-linear expectations.
\end{abstract}

\section{Introduction}
In the last years much effort has been made to investigate the problem of model uncertainty. The problem has roots in the real life: participants of the financial markets are interested in measuring the risk of losses connected with the financial positions which they take. It is worth to note that besides the model-implicit risks resulting from the movement of prices, there is also a risk (or rather the uncertainty) that one has misspecified the model by either assuming the wrong parameters or taking a wrong class of models. The uncertainty connected with the model misspecification is very interesting from the mathematical point of view as it is on the one hand difficult to quantify, and on the other hand, it can have serious consequences to the conclusions drawn from the misspecified model. It is also very challenging as often it leads to the family of the probability measures which cannot be dominated by a single reference measure. 

Such undominated families of models were studied first by Denis and Martini in \cite{Martini} where they proposed the framework for investigating the volatility uncertainty via quasi-sure approach. At the same time Shige Peng introduced his $G$-Brownian motion, a process on a canonical space equipped with a sublinear expectation called a $G$-expectation (see \cite{Peng_GBM}). Peng constructed the $G$-expectation using the viscosity solutions of a non-linear heat equation reflecting the unknown level of volatility. Both approaches are closely connected, as it was shown in \cite{Denis_function_spaces}, and the interest spurred by both papers resulted in many interesting papers:  \cite{Peng_skrypt}, \cite{Soner_mart_rep}, \cite{Song_Mart_decomp}, \cite{Soner_quasi_anal}, \cite{Peng_general_rep}, \cite{Peng_bsde}, \cite{Girsanov}, \cite{Nutz_random}, \cite{Nutz_conditional} and many others. 

The natural generalization of both frameworks is to consider a jump processes and the uncertainty associated with the drift, the volatility and the jump component. Such jump process was first considered in \cite{Peng_levy} in which a process called a $G$-\levy process was introduced. A $G$-\levy process is a \cadlag process defined on a sublinear expectation space which has increments stationary and independent of the past and which might be decomposed into a pure-jump part and a continuous part. A sublinear expectation associated with a $G$-\levy process may be defined by some non-linear IPDE describing all three sources of uncertainty. Ren in \cite{Ren} showed that such a sublinear expectation might be represented as supremum of ordinary expectations over a relatively compact family of probability measures (which again cannot be dominated by a single reference probability measure). Nutz and Neufeld in \cite{Nutz_levy} and we in gave a characterization of the laws used in this representation. The difference between our approach and the one by Nutz and Neufeld is that we consider a strong formulation via some \itolevy integrals, whereas Nutz \emph{et al.} characterizes the laws using much weaker conditions on characteristics of a semimartingale. More information on the $G$-\levy processes can be found in Section \ref{sec_preliminaries_d}.

In this paper we investigate the properties of the Poisson random measure and a Poisson integral for $G$-\levy processes. We investigate both the distributional properties of these objects and the regularity w.r.t. omega. Among other results, we show that a Poisson integral is itself a $G$-\levy process and its regularity w.r.t. omega (i.e. quasi-continuity) is strictly connected with the regularity of an integrand. The ultimate goal of the paper is to show that the decomposition of a $G$-\levy process into a generalized $G$-Brownian motion and a process of finite variation might be done pathwise on the same sublinear expectation space. Moreover, the may require the finite variation part to be a $G$-martingale. At last we show that we cannot require the finite variation part to be a symmetric $G$-martingale, unless we extend the sublinear expectation space. Then, however, the decomposition is only meant in the distributional sense.

The structure of the paper is as follows. In Section \ref{sec_preliminaries_d} we give an introduction to the framework and present the most important results used throughout the paper. In Section \ref{sec_compensation_d} we consider two different ways of compensating a pure-jump $G$-\levy process on an auxiliary sublinear expectation space. Section \ref{sec_Poiss_random_d} is devoted to studying the continuity of an Poisson integral as an operator and investigating its distributional properties. We also establish the characterization of the spaces of integrands in terms of their tightness, uniform integrability and regularity w.r.t. $z$. In Section \ref{sec_quasi-cont_d} we investigate the regularity of the Poisson integral w.r.t. $\omega$. We give sufficient condition for the quasi-continuity of the integral in terms of the quasi-continuity of the integrand. We also establish the conditions for the quasi-continuity of the jump times of a $G$-\levy process and use them to give the necessary conditions for the quasi-continuity of the Poisson integral. Lastly, in Section \ref{sec_applications_d} we apply the previous results to establish the decomposition of a $G$-\levy process into a generalized $G$-Brownian motion and a pure-jump $G$-\levy process, which are defined without introducing the auxiliary sublinear expectation space. We also investigate in that section the possibility of compensating a pure jump process into a $G$-\levy martingale.

\section{Preliminaries}\label{sec_preliminaries_d}
	Let $\Omega$ be a given space and $\H$  be a vector lattice of real functions defined on $\Omega$, i.e. a linear space containing $1$ such that $X\in\H$ implies $|X|\in\H$. We will treat elements of $\H$ as random variables.
	\begin{defin}\label{def_sublinear_exp_d}
		\emph{A sublinear expectation} $\E$ is a functional $\E\colon \H\to\R$ satisfying the following properties
		\begin{enumerate}
			\item \textbf{Monotonicity:} If $X,Y\in\H$ and $X\geq Y$ then $\E[ X]\geq\E [Y]$.
			\item \textbf{Constant preserving:} For all $c\in\R$ we have $\E [c]=c$.
			\item \textbf{Sub-additivity:} For all $X,Y\in\H$ we have $\E [X] - \E[Y]\leq\E [X-Y]$.
			\item \textbf{Positive homogeneity:} For all $X\in\H$  we have $\E [\lambda X]=\lambda\E [X]$, $\forall\,\lambda\geq0$.
		\end{enumerate}
		The triple $(\Omega,\H,\E)$ is called \emph{a sublinear expectation space}.
	\end{defin}
	
	We will consider a space $\H$ of random variables having the following property: if\break $X_i\in\H,\ i=1,\ldots n$ then
	\[
		\phi(X_1,\ldots,X_n)\in\H,\quad \forall\ {\phi\in\cliprn},
	\]
	where $\cliprn$ is the space of all bounded Lipschitz continuous functions on $\R^n$. We will express the notions of a distribution and an independence of the random vectors using test functions in $\cliprn$.
	\begin{defin}
		An $m$-dimensional random vector $Y=(Y_1,\ldots,Y_m)$ is said to be independent of an $n$-dimensional random vector $X=(X_1,\ldots,X_n)$ if for every $\phi\in C_{b,Lip}(\R^n\times \R^m)$
		\[
			\E[\phi(X,Y)]=\E[\E[\phi(x,Y)]_{x=X}].
		\]
		Let $X_1$ and $X_2$ be  $n$-dimensional random vectors defined on sublinear random spaces\break $(\Omega_1,\H_1,\E_1)$ and $(\Omega_2,\H_2,\E_2)$ respectively. We say that $X_1$ and $X_2$ are identically distributed and denote it by $X_1 \sim X_2$, if for each $\phi\in\cliprn$ one has
		\[
			\E_1[\phi(X_1)]=\E_2[\phi(X_2)].
		\]
	\end{defin}
	Now we give the definition of $G$-\levy process (after \cite{Peng_levy}).	
	\begin{defin}\label{def_levy_d}	
		Let $X=(X_t)_{t\geq0}$ be a $d$-dimensional \cadlag process on a sublinear expectation space $(\Omega,\H, \E)$. We say that $X$ is a \levy process if:
		\begin{enumerate}
			\item $X_0=0$,
			\item for each $t,s\geq 0$ the increment $X_{t+s}-X_{t}$ is independent of $(X_{t_1},\ldots, X_{t_n})$ for every $n\in\N$ and every partition $0\leq t_1\leq\ldots\leq t_n\leq t$,
			\item the distribution of the increment $X_{t+s}-X_t,\ t,s\geq 0$ is stationary, i.e.\ does not depend on $t$.
		\end{enumerate}
		Moreover, we say that a \levy process $X$ is a $G$-\levy process, if satisfies additionally following conditions
		\begin{enumerate}\setcounter{enumi}{3}
			\item there a $2d$-dimensional \levy process $(X^c_t,X^d_t)_{t\geq0}$ such for each $t\geq0$ $X_t=X_t^c+X_t^d$, where the equality is meant in the distributional sense,
			\item\label{condition_d} processes $X^c$ and $X^d$ satisfy the following growth conditions
			\[
					\lim_{t\downarrow0} \E[|X^c_t|^3]t^{-1}=0;\quad \E[|X^d_t|]<Ct\ \textrm{for all}\ t\geq0.
			\]
		\end{enumerate}
	\end{defin}
	\begin{rem}
		The condition \ref{condition_d} implies that $X^c$ is a $d$-dimensional generalized $G$-Brownian motion (in particular, it has continuous paths), whereas the jump part $X^d$ is of finite variation.
	\end{rem}
	Peng and Hu noticed in their paper that each $G$-\levy process $X$ might be characterized by a non-local operator $G$.
	\begin{tw}[\levykhintchine representation, Theorem 35 in \cite{Peng_levy}]\label{tw_levykhintchine_d}
		Let $X$ be a $G$-\levy process in $\R^d$. For every $f\in C^3_b(\R^d)$ such that $f(0)=0$ we put
		\[
			G[f(.)]:=\lim_{\delta\downarrow 0}\, \E[f(X_{\delta})]\delta^{-1}.
		\]
		The above limit exists. Moreover, $G$ has the following \levykhintchine representation
		\[
			G[f(.)]=\sup_{(v,p,Q)\in \u}\left\{\int_{\r0} f(z)v(dz)+\langle Df(0),q\rangle + \frac{1}{2}\tr[D^2f(0)QQ^T] \right\},
		\]
		where $\r0:=\R^d\setminus\{0\}$, $\u$ is a subset $\u\subset \m(\r0)\times \R^d\times\R^{d\times d}$ and $\m(\r0)$ is a set of all Borel measures on $(\r0,\B(\r0))$. We know additionally that $\u$ has the property
		\begin{equation}\label{eq_property_of_u_d}
			\sup_{(v,p,Q)\in \u}\left\{\int_{\r0} |z|v(dz)+|q|+ \tr[QQ^T] \right\}<\infty.		
		\end{equation}

	\end{tw}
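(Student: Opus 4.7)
The strategy splits the limit into a continuous and a jump contribution by exploiting the distributional decomposition $X_\delta \sim X^c_\delta + X^d_\delta$. For $f\in C^3_b(\R^d)$ with $f(0)=0$, I would write
\[
f(X^c_\delta+X^d_\delta)=\bigl[f(X^c_\delta+X^d_\delta)-f(X^d_\delta)\bigr]+f(X^d_\delta)
\]
and analyze each bracket separately. A third-order Taylor expansion of the first bracket around $X^d_\delta$ gives
\[
f(X^c_\delta+X^d_\delta)-f(X^d_\delta)=\langle Df(X^d_\delta),X^c_\delta\rangle+\tfrac12\tr\bigl[D^2f(X^d_\delta)X^c_\delta(X^c_\delta)^T\bigr]+R_\delta,
\]
with $|R_\delta|\leq C\|f\|_{C^3}|X^c_\delta|^3$; the growth condition $\E[|X^c_\delta|^3]=o(\delta)$ kills the remainder after dividing by $\delta$. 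Because $\E[|X^d_\delta|]\leq C\delta$ makes $X^d_\delta$ converge to $0$ sufficiently fast, replacing $D^if(X^d_\delta)$ by $D^if(0)$ costs only $o(1)$ after division by $\delta$, so the continuous contribution is handled by the classical generator theory for the generalized $G$-Brownian motion $X^c$, yielding a term $\langle Df(0),q\rangle+\tfrac12\tr[D^2f(0)QQ^T]$.

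For the second bracket, $X^d$ is a pure-jump \levy process of finite $L^1$-variation. Writing $f(X^d_\delta)$ as a telescoping sum over jumps (or via the compound-Poisson approximation, noting that in each scenario the probability of two or more jumps in $[0,\delta]$ is $O(\delta^2)$), I would show
\[
\delta^{-1}\E[f(X^d_\delta)]\longrightarrow \sup_{v}\int_{\r0}f(z)\,v(dz)
\]
for a suitable family of \levy measures, where the sublinearity of $\E$ forces the supremum structure. Combining the two limits gives the existence of $G[f(\cdot)]$ and a preliminary expression as a sum of two suprema; sublinearity, monotonicity and positive homogeneity of $G$ on $\{f\in C^3_b:f(0)=0\}$ then permit an application of a Hahn--Banach/Daniell-type representation to collect the two suprema into a single sup over triples $(v,q,Q)\in\u\subset\m(\r0)\times\R^d\times\R^{d\times d}$.

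The integrability property \eqref{eq_property_of_u_d} requires the main technical work. Testing $G$ against a smooth bump $f$ approximating $z\mapsto|z|\I_{|z|\leq1}+\I_{|z|>1}$ and using $\E[|X^d_\delta|]\leq C\delta$ gives a uniform bound on $\int|z|\wedge1\,v(dz)$ and on the total mass of $v$ on $\{|z|>1\}$ across $\u$; testing against quadratic bumps in directions $e_i\pm e_j$ uses $\E[|X^c_\delta|^3]=o(\delta)$ (which precludes heavy-tailed diffusion) together with boundedness of $\E[|X^c_\delta|^2]/\delta$ (a consequence of tightness of the increments) to bound $\tr[QQ^T]$; the drift bound $|q|$ follows from applying $G$ to linear bumps restricted to the unit ball.

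The main obstacle is the last step: turning the \emph{a priori} distinct suprema over diffusion characteristics and jump measures into one joint supremum, and proving that the triples are uniformly integrable in the sense of \eqref{eq_property_of_u_d}. This is where one must exploit simultaneously the joint \levy property of $(X^c,X^d)$ (to rule out spurious mixing of the two suprema) and the quantitative growth conditions in item \ref{condition_d} of Definition \ref{def_levy_d}; the rest of the argument is a careful but routine Taylor expansion plus small-time analysis of a finite-variation jump process.
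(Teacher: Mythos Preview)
The paper does not prove this theorem; it is quoted from Hu and Peng \cite{Peng_levy} (their Theorem~35) as background in the preliminaries, so there is no proof in the present paper to compare against.

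On the substance of your sketch, the main gap is that sublinear expectations do not respect additive decompositions. Writing $f(X_\delta)=[f(X^c_\delta+X^d_\delta)-f(X^d_\delta)]+f(X^d_\delta)$ and then applying $\E$ to each bracket separately yields only the inequality $\E[A+B]\leq\E[A]+\E[B]$, not an identity; so ``computing the two limits and adding them'' produces at best an upper bound for $G[f]$, not its value, and you never arrive at a genuine ``sum of two suprema'' to which Hahn--Banach could be applied. The route actually taken by Hu and Peng is the reverse of what you outline: one first proves that the limit $G[f]=\lim_{\delta\downarrow0}\delta^{-1}\E[f(X_\delta)]$ exists for every admissible $f$ (using the semigroup property coming from stationary independent increments, not any decomposition of $X$), observes that $f\mapsto G[f]$ is itself a monotone sublinear functional on $\{f\in C^3_b:f(0)=0\}$, and only then invokes a Hahn--Banach/Riesz representation on $G$ directly to write it as a supremum of linear functionals, each of which is subsequently identified with a triple $(v,q,Q)$. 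Your scheme tries to split before representing, which is where it breaks.

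A secondary point: your test-function argument for \eqref{eq_property_of_u_d} as written only controls $\sup_{v}\int(|z|\wedge1)\,v(dz)$, not $\sup_v\int|z|\,v(dz)$. The full first-moment bound genuinely needs the hypothesis $\E[|X^d_t|]\leq Ct$ applied to (smooth approximations of) the unbounded function $f(z)=|z|$, with a separate tail-truncation argument; a bounded bump near the origin cannot see the large-$|z|$ contribution.
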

	\begin{tw}[Theorem 36 in \cite{Peng_levy}]
			Let $X$ be a $d$-dimensional $G$-\levy process.  For each $\phi\in\cliprd$, define $u(t,x):=\E[\phi(x+X_t)]$. Then $u$ is the unique viscosity solution of the following integro-PDE
			\begin{align}\label{eq_integroPDE_d}
				0=&\partial_tu(t,x)-G[u(t,x+.)-u(t,x)]\notag\\
				=&\partial_tu(t,x)-\sup_{(v,p,Q)\in \u}\left\{\int_{\r0} [u(t,x+z)-u(t,x)]v(dz)\right.\notag\\
				&\left.+\langle Du(t,x),q\rangle + \frac{1}{2}\tr[D^2u(t,x)QQ^T] \right\}
			\end{align}
			with initial condition $u(0,x)=\phi(x)$.
	\end{tw}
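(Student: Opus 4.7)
The plan is the classical dynamic-programming route adapted to the sublinear $G$-setting: establish a semigroup identity for $u$, expand it against smooth test functions in a small time parameter to obtain the viscosity inequalities for (\ref{eq_integroPDE_d}), and close with a comparison principle. As preparation I would observe that $u(t,\cdot)$ inherits the Lipschitz constant of $\phi$ from the monotonicity and sub-additivity of $\E$, and that $u$ is continuous in $t$: by condition \ref{condition_d} of Definition \ref{def_levy_d} together with a H\"older-type inequality for sublinear expectations, $\E[|X^c_t|]\leq(\E[|X^c_t|^3])^{1/3}\to 0$ and $\E[|X^d_t|]\leq Ct\to 0$, and this transfers through the semigroup identity below to continuity of $u$ in $t$.

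The semigroup identity reads $u(t+s,x)=\E[u(s,x+X_t)]$. To see it, write $X_{t+s}=X_t+(X_{t+s}-X_t)$ and use that $X_{t+s}-X_t$ is independent of $X_t$ (item 2 of Definition \ref{def_levy_d}) and distributed as $X_s$ (item 3); the definition of independence then yields
\[
u(t+s,x)=\E[\phi(x+X_t+(X_{t+s}-X_t))]=\E\bigl[\E[\phi(x+y+X_s)]_{y=X_t}\bigr]=\E[u(s,x+X_t)].
\]
For the subsolution property, fix $(t_0,x_0)\in(0,\infty)\times\R^d$ and a smooth bounded test function $\psi$ with $\psi\geq u$ and $\psi(t_0,x_0)=u(t_0,x_0)$. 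Applying the semigroup at $t=t_0-\delta$, $s=\delta$ and bounding $u\leq\psi$ inside $\E$ gives
\[
0\leq\E[\psi(t_0-\delta,x_0+X_\delta)]-\psi(t_0,x_0)=\E[f_\delta(X_\delta)]+\psi(t_0-\delta,x_0)-\psi(t_0,x_0),
\]
where $f_\delta(z):=\psi(t_0-\delta,x_0+z)-\psi(t_0-\delta,x_0)$ satisfies $f_\delta(0)=0$ (the identity $\E[A-c]=\E[A]-c$ for scalar $c$ is immediate from constant preserving applied in both directions of sub-additivity). Dividing by $\delta$ and letting $\delta\downarrow 0$, the time difference tends to $-\partial_t\psi(t_0,x_0)$ and, by the defining limit of $G$ combined with a continuity-in-$f$ estimate absorbing the $\delta$-dependence (using $|(f_\delta-f_0)(z)|\leq C\delta|z|$ and the uniform bound (\ref{eq_property_of_u_d})), $\E[f_\delta(X_\delta)]/\delta\to G[\psi(t_0,x_0+\cdot)-\psi(t_0,x_0)]$. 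This yields the subsolution inequality; the supersolution case is symmetric.

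Uniqueness reduces to a comparison principle for the integro-PDE (\ref{eq_integroPDE_d}). Since $G$ is the supremum over $\u$ with the uniform bound (\ref{eq_property_of_u_d}), the non-local integrals are well defined on Lipschitz test functions and uniformly controlled, and one can adapt the Crandall-Ishii doubling-of-variables technique to the non-local framework. The main obstacle I foresee is exactly this comparison step: the measures $v\in\m(\r0)$ appearing in $\u$ may have infinite mass near $0$, so the non-local operator has to be split into a small-jump contribution handled by second-order bounds on the test functions and a large-jump contribution handled by the $L^1$-type estimate (\ref{eq_property_of_u_d}); the doubling-of-variables argument must then be arranged so that both pieces close. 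Once comparison is in hand, $u$ coincides with any bounded and uniformly continuous viscosity solution having initial value $\phi$, establishing uniqueness.
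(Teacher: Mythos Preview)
The paper does not contain its own proof of this statement: it is quoted verbatim as Theorem 36 of \cite{Peng_levy} and used without argument. So there is nothing in the present paper to compare your proposal against; you are reconstructing the proof that Hu and Peng give in the cited reference.

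That said, your outline is the standard one and matches what is done in \cite{Peng_levy}: Lipschitz regularity in $x$ inherited from $\phi$, continuity in $t$ from the growth conditions in Definition~\ref{def_levy_d}, the dynamic-programming (semigroup) identity from independence and stationarity of increments, the viscosity inequalities via the defining limit $G[f]=\lim_{\delta\downarrow 0}\E[f(X_\delta)]/\delta$ applied to the test function, and finally a comparison principle. One small over-worry in your sketch: you flag the possible infinite mass of $v$ near $0$ as the main difficulty in comparison, but note that condition~(\ref{eq_property_of_u_d}) already gives the uniform first-order bound $\sup_{(v,p,Q)\in\u}\int_{\r0}|z|\,v(dz)<\infty$, i.e.\ the jump part is of finite variation. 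This is strictly easier than the general integro-PDE setting (where one only has $\int(|z|^2\wedge 1)\,v(dz)<\infty$), so the small-jump contribution is controlled by first-order rather than second-order estimates on the test functions, and the doubling argument simplifies accordingly.
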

	
	It turns out that the set $\u$ used to represent the non-local operator $G$ fully characterize $X$, namely having $X$ we can define $\u$ satysfying eq.\ (\ref{eq_property_of_u_d}) and vice versa.
	\begin{tw}
		Let $\u$ satisfy (\ref{eq_property_of_u_d}). Consider the canonical  space $\Omega:=\d0$ of all \cadlag functions taking values in $\R^{d}$ equipped with the Skorohod topology. Then there exists a sublinear expectation $\GE$ on $\d0$ such that the canonical process $(X_t)_{t\geq0}$ is a $G$-\levy process satisfying \levykhintchine representation with the same set $\u$.
	\end{tw}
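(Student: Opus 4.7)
The plan is to construct $\GE$ on the canonical Skorohod space by defining a nonlinear Markov semigroup $(P_t)_{t\geq0}$ associated with the integro-PDE (\ref{eq_integroPDE_d}), prescribing finite-dimensional distributions through it, and extending by Peng's completion.

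First I would check that for each $\phi\in\cliprd$ the PDE (\ref{eq_integroPDE_d}) with $u(0,\cdot)=\phi$ and with $G$ built from the given $\u$ admits a unique bounded Lipschitz viscosity solution; property (\ref{eq_property_of_u_d}) guarantees that the nonlocal operator is well defined on $\cliprd$, so this is a standard application of the comparison and existence theory for parabolic integro-PDEs already used in \cite{Peng_levy}. Setting $P_t\phi(x):=u(t,x)$, sublinearity of $G$ translates into $(P_t)$ being a sublinear Markov semigroup on $\cliprd$: monotone, constant-preserving, subadditive, and positively homogeneous, with $P_{t+s}=P_tP_s$ by uniqueness.

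Second, on $\Omega=\d0$ with canonical process $(X_t)$, I would define $\GE$ on the cylindrical lattice
\[
\H_0:=\{\phi(X_{t_1}-X_{t_0},\ldots,X_{t_n}-X_{t_{n-1}}):\phi\in\cliprdn,\ 0=t_0<\ldots<t_n\}
\]
by the backward iteration: put $\phi_n:=\phi$ and, recursively for $k=n-1,\ldots,0$,
\[
\phi_k(x_1,\ldots,x_k):=\bigl(P_{t_{k+1}-t_k}\phi_{k+1}(x_1,\ldots,x_k,\cdot)\bigr)(0),
\]
and set $\GE[\phi(\cdots)]:=\phi_0$. Consistency under insertion of an extra time point follows from the semigroup property, and the axioms of a sublinear expectation are inherited from those of each $P_t$. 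By construction, increments $X_{t+s}-X_t$ are stationary and independent of $\phi_0$-cylindrical variables supported on $[0,t]$, so items 1--3 of Definition \ref{def_levy_d} hold. The operator $\GE$ is then extended to the closure of $\H_0$ under the seminorm $\GE[|\cdot|]$ via the standard Peng completion.

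Third, to verify items 4 and 5, I would split $\u$ additively as $\u^c:=\{(0,q,Q):(v,q,Q)\in\u\}$ and $\u^d:=\{(v,0,0):(v,q,Q)\in\u\}$, both of which still satisfy (\ref{eq_property_of_u_d}); applying the same construction on $\da$ produces a sublinear expectation under which the canonical $2d$-dimensional process $(X^c,X^d)$ is a \levy process whose components have the diffusive resp.\ pure-jump generator, and the uniqueness of solutions to (\ref{eq_integroPDE_d}) identifies the law of $X^c+X^d$ with that of $X$. The growth bounds are obtained by testing the two PDEs with $\phi(x)=|x|^3$ and $\phi(x)=|x|$ and performing a Taylor expansion at $t=0$: the pure diffusion PDE has no jump term so $\GE[|X_t^c|^3]=o(t)$, while the jump term in the second case gives $\GE[|X_t^d|]\leq t\sup_{v\in\u^d}\int_{\r0}|z|v(dz)$, which is finite by (\ref{eq_property_of_u_d}).

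The main obstacle is ensuring that the finite-dimensional prescription is actually carried by the Skorohod space, i.e.\ that the canonical process has \cadlag paths quasi-surely under $\GE$. This requires an Aldous-type tightness estimate at the level of the upper expectation: I would control the jump part using the a priori bound $\GE[|X_t^d|]\leq Ct$, which prevents accumulation of jumps, and the continuous part via $\GE[|X_t^c|^3]=o(t)$ together with a Kolmogorov-type criterion adapted to sublinear expectations, mimicking Hu--Peng's original argument but on $\d0$ rather than $C(\R^+,\R^d)$.
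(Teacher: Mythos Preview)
Your overall framework coincides with the paper's: the paper does not give its own proof but cites Hu--Peng (Theorems 38 and 40 in \cite{Peng_levy}) and then spells out exactly the construction you describe in your first two steps---define $P_t\phi$ via the unique viscosity solution of the integro-PDE, iterate backward on cylinder functions in $\lip$, and complete in $\|\cdot\|_p$. So on the core construction you are aligned with the intended argument.

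There is, however, a genuine gap in your third step. By defining the decoupled sets $\u^c=\{(0,q,Q):(v,q,Q)\in\u\}$ and $\u^d=\{(v,0,0):(v,q,Q)\in\u\}$ and building the $2d$-dimensional process from these separately, you destroy the constraint linking a given $v$ to the admissible $(q,Q)$. The generator of $X^c+X^d$ under your construction becomes
\[
\sup_{v\in\v}\int_{\r0}[f(x+z)-f(x)]\,v(dz)\;+\;\sup_{(q,Q)}\Bigl\{\langle Df(0),q\rangle+\tfrac12\tr[D^2f(0)QQ^T]\Bigr\},
\]
which in general strictly dominates the original $G[f]=\sup_{(v,q,Q)\in\u}\{\cdots\}$, so uniqueness for \eqref{eq_integroPDE_d} cannot identify the laws---they solve different equations. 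The correct object on $\da$ keeps the coupling: one uses a $2d$-dimensional parametrising set of the form $\{((v\otimes 0),(0,p),(0,Q)):(v,p,Q)\in\u\}$ (compare Proposition 13 later in this paper for the compensated variant). With that single modification your argument for items 4--5 goes through. A smaller point: $|x|^3$ and $|x|$ are not in $\cliprd$, so the growth bounds need a truncation/approximation step rather than a direct insertion into the PDE; this is routine but should be said.
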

	The proof might be found in \cite{Peng_levy} (Theorem 38 and 40). We will give however the construction of $\GE$, as it is important to understand it. 
	
		Begin with defining the sets of random variables. Put
		\begin{align*}
			\lipT:=&\{\xi\in L^0(\Omega)\colon \xi=\phi(X_{t_1},X_{t_2}-X_{t_1},\ldots,X_{t_n}-X_{t_{n-1}}),\\ &\phi\in C_{b,Lip}(\R^{d\times n}),\ 0\leq t_1<\ldots<t_n<T\},
		\end{align*}
		where $X_t(\omega)=\omega_t$ is the canonical process on the space $\d0$ and $L^0(\Omega)$ is the space of all random variables, which are measurable to the filtration generated by the canonical process. We also set
		\[
			\lip:=\bigcup_{T=1}^{\infty}\, \lipT.
		\]
		Firstly, consider the random variable $\xi=\phi(X_{t+s}-X_{t})$, $\phi\in\cliprd$. We define
		\[
			\GE[\xi]:=u(s,0),
		\]
		where $u$ is a unique viscosity solution of integro-PDE (\ref{eq_integroPDE_d}) with the initial condition $u(0,x)=\phi(x)$. For general
		\[
			\xi=\phi(X_{t_1},X_{t_2}-X_{t_1},\ldots,X_{t_n}-X_{t_{n-1}}),\quad \phi\in C_{b,Lip}(\R^{d\times n})
		\]
		we set $\GE[\xi]:=\phi_n$, where $\phi_n$ is obtained via the following iterated procedure
		\begin{align*}
			\phi_1(x_1,\ldots,x_{n-1})&=\GE[\phi(x_1,\ldots, X_{t_n}-X_{t_{n-1}})],\\
			\phi_2(x_1,\ldots,x_{n-2})&=\GE[\phi_1(x_1,\ldots, X_{t_{n-1}}-X_{t_{n-2}})],\\
			&\vdots\\
			\phi_{n-1}(x_1)&=\GE[\phi_{n-1}(x_1,X_{t_{2}}-X_{t_{1}})],\\
			\phi_n&=\GE[\phi_{n-1}(X_{t_{1}})].
		\end{align*}
		Lastly, we extend definition of $\GE[.]$ on the completion of $\lipT$ (respectively $\lip$) under the norm $\|.\|_p:=\GE[|.|^p]^{\frac{1}{p}},\ p\geq1$. We denote such a completion by $L^p_G(\Omega_T)$ (or resp. $L^p_G(\Omega)$).
		
		Note that we can equip the Skorohod space $\d0$ with the canonical filtration $\F_t:=\B(\Omega_t)$, where $\Omega_t:=\{\omega_{.\wedge t}\colon \omega\in\Omega\}$. Then using the procedure above we may in fact define the time-consistent conditional sublinear expectation $\GE[\xi|\F_t]$. Namely, w.l.o.g. we may assume that $t=t_i$ for some $i$ and then
		\[
			\GE[\xi|\F_{t_i}]:=\phi_{n-i}(X_{t_{0}},X_{t_1}-X_{t_0},\ldots,X_{t_i}-X_{t_{i-1}}).
		\]
		One can easily prove that such an operator is continuous w.r.t. the norm $\|.\|_1$ and might be extended to the whole space $L^1_G(\Omega)$. By construction above, it is clear that the conditional expectation satisfies the tower property, i.e. is dynamically consistent.
		\begin{defin}
			A stochastic process $(M_t)_{t\in[0,T]}$ is called a $G$-martingale if $M_t\in L^1_G(\Omega_t)$ for every $t\in[0,T]$ and for each $0\leq s\leq t\leq T$ one has
			\[
				M_s=\GE[M_t|\F_s].
			\]
			Moreover, a $G$-martingale $M$ is called symmetric, if $-M$ is also a $G$-martingale.
		\end{defin}

\subsection{Representation of $\GE[.]$ as an upper-expectation}
$\GE[.]$ satisfies the definition of a coherent risk measure and, as it is well known, coherent risk measures exhibit representation as a supremum of some expectations over a family of some probabilities. In \cite{Ren} it has been proved that the sublinear expectation associated with a $G$-\levy process can be represented as such an upper-expectation. Moreover, in \cite{moj} we characterized that family of probability measures as laws of some \itolevy integrals under some conditions on the family of \levy measures (see Section 3 in \cite{moj}). We will use this characterization and take the following assumption throughout this paper.
\begin{ass}\label{ass1_d}
Let a canonical process $X$ be a $G$-\levy process in $\R^d$ on a sublinear expectation space $(\d0, L^1_G(\Omega), \GE)$. Let $\u\subset \m(\r0)\times \R^d\times\R^{d\times d}$ be a set used in the \levykhintchine representation of $X$ \eqref{eq_integroPDE_d} satisfying \eqref{eq_property_of_u_d}. 

	Moreover, define the set of \levy measures in $\u$ as follows
\begin{equation}\label{eq_def_v_d}
	\v:=\{v\in  \m(\r0)\colon \exists (p,q)\in \R^d\times\R^{d\times d}\textrm{ such that }(v,p,q)\in \u\}.
\end{equation}
	We assume that there exists  $q<1$ such that
	\[
		\sup_{v\in\v}\int_{\{0<|z|< 1\}}|z|^qv(dz)<\infty.
	\]
\end{ass}

\begin{rem}\label{rem_g_v_d}
Let $\G_{\B}$ denote the set of all Borel function $g\colon\R^d\to \R^d$ such that $g(0)=0$. It might be checked that for all \levy measures $\mu\in \m(\R^d)$ which are absolutely continuous w.r.t. Lebesgue measure there exists a  exists a function $g_v\in\G_{\B}$ such that
\[
	v(B)=\mu(g_v^{-1}(B))\quad \forall B\in\B(\r0).
\] 
Moreover, we can choose the functions $g_v$ in such a way that for all $\epsilon>0$ there exists $\eta>0$ such that for all $v\in\v$ we have
$g_v^{-1}(B(0,\epsilon)^c)\subset B(0,\eta)^c$. We construct such a function explicitly in Appendix, Subsection \ref{ssec_g_v_d}.

We fix a measure $\mu$ and assume additionally that	$\int_{\r0}|z|\mu(dz)<\infty$. We may consider a different parametrizing set in the \levykhintchine formula. Namely, using
	\[
		\tilde \u:=\{(g_v,p,q)\in \G_{\B}\times \R^d\times\R^{d\times d}\colon (v,p,q)\in\u\}
	\]
	it is elementary that the  equation \eqref{eq_integroPDE_d} is equivalent to the following equation
				\begin{align}\label{eq_integroPDE2_d}
				0=&\partial_tu(t,x)-\sup_{(g,p,Q)\in \tilde\u}\left\{\int_{\r0} [u(t,x+g(z))-u(t,x)]\mu(dz)\right.\notag\\
				&\left.+\langle Du(t,x),p\rangle + \frac{1}{2}\tr[D^2u(t,x)QQ^T] \right\}.
			\end{align}
\end{rem}

Let $(\tilde \Omega,\G,\P_0)$ be a probability space carrying a Brownian motion $W$ and a \levy process with a \levy triplet $(0,0,\mu)$, which is independent of $W$. Let $N(dt,dz)$ be a Poisson random measure associated with that \levy process. Define
$N_t=\int_{\r0}zN(t,dz)$, which is finite $\P_0$-a.s. as we assume that $\mu$ integrates $|z|$. We also define the filtration generated by $W$ and $N$:
\begin{align*}
	\G_t:=&\sigma\{W_s,\ N_s\colon 0\leq s\leq t\}\vee\n;\ \n:=\{A\in\tilde \Omega\colon \P_0(A)=0\};\ \mathbb G:=(\G_t)_{t\geq0}.
\end{align*}
\begin{tw}[Theorem 11-13 and Corollary 14 in \cite{moj}]\label{tw_rep_sub_lin_d}
		Introduce a set of integrands $\a_{t,T}^{\u}$, $0 \leq t<T$, associated with $\u$ as a set of all processes $\theta=(\theta^d,\theta^{1,c}, \theta^{2,c})$ defined on $]t,T]$ satisfying the following properties:
	\begin{enumerate}
		\item $(\theta^{1,c},\theta^{2,c})$ is $\mathbb G$-adapted process and $\theta^d$ is $\mathbb G$-predictable random field on $]t,T]\times \R^d$.
		\item For $\P_0$-a.a. $\omega\in\tilde \Omega$ and a.e. $s\in]t,T]$ we have that 
		$(\theta^d(s,.)(\omega),\theta^{1,c}_s(\omega), \theta^{2,c}_s(\omega))\in \tilde\u$.
		\item $\theta$ satisfies the following integrability condition
		\[
		\E^{\P_0}\left[\int_t^T\left[|\theta^{1,c}_s|+|\theta^{2,c}_s|^2+\int_{\r0}|\theta^{d}(s,z)|\mu(dz)\right]ds\right]<\infty.
		\]
	\end{enumerate}
	For $\theta\in \a_{0,\infty}^{\u}$ denote the following \levy -\ito integral as
	\[
		B^{t,\theta}_T=\int_{t}^T\, \theta^{1,c}_sds+\int_{t}^T\, \theta^{2,c}_sdW_s+ \int_{]t,T]}\int_{\r0}\, \theta^{d}(s,z)N(ds,dz).
	\]
		Lastly, for a fixed $\phi\in \cliprd$ and fixed $T>0$ define for each $(t,x)\in[0,T]\times\R^d$
	\begin{align*}
		u(t,x)&:=\sup_{\theta\in\a_{t,T}^{\u}}\E^{\P_0} [\phi(x+B^{t,\theta}_T)].
	\end{align*}
	Then under Assumption \ref{ass1_d} $u$ is the viscosity solution of the following integro-PDE
	\begin{equation}\label{eq_IPDE_backwards_d}
			\partial_tu(t,x)+G[u(t,x+.)-u(t,x)]=0
	\end{equation}
	with the terminal condition $u(T,x)=\phi(x)$. Moreover, for every $\xi\in L^1_G(\Omega)$ we can represent the sublinear expectation in the following way	
		\[
		\GE[\xi]=\sup_{\theta\in \a^{\u}_{0,\infty}}\, \E^{\P^{\theta}}[\xi],
	\]
	where  $\P^{\theta}:=\P_0\circ (B{.}^{0,\theta})^{-1},\ \theta\in\a^{\u}_{0,\infty}$. We will introduce also the following notation $\mathfrak{P}:=\{\P^{\theta}\colon \theta \in \a^{\u}_{0,\infty}\}$.
\end{tw}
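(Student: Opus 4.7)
The plan is to prove this in three stages: (i) establish a dynamic programming principle (DPP) for the value function $u$, (ii) use the DPP to identify $u$ as the viscosity solution of \eqref{eq_IPDE_backwards_d}, and (iii) leverage the uniqueness of that viscosity solution together with the PDE characterization of $\GE$ (equation \eqref{eq_integroPDE_d}) to obtain the representation formula. The shape of the argument is classical for stochastic control problems, but the non-local component $\int_{\r0}\theta^d(s,z)N(ds,dz)$ requires extra care.

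First I would verify that the family $\a_{t,T}^{\u}$ is rich enough to support a DPP. Using Remark \ref{rem_g_v_d}, any triplet $(v,p,Q) \in \u$ is represented via $(g_v,p,Q) \in \tilde\u$ so that constant controls $\theta^d(s,z,\omega) = g_v(z)$, $\theta^{1,c}_s = p$, $\theta^{2,c}_s = Q$ lie in $\a_{t,T}^{\u}$; together with stability under concatenation of controls at stopping times (obtained by splicing a $\G_\tau$-measurable selection), this yields the DPP
\[
u(t,x) = \sup_{\theta \in \a_{t,\tau}^{\u}} \E^{\P_0}\!\left[u\!\left(\tau, x + B^{t,\theta}_\tau\right)\right]
\]
for every $\mathbb G$-stopping time $\tau \in [t,T]$. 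The hard part of this stage, and in fact the single main obstacle of the whole proof, is the measurable selection / pasting argument: since $\tilde\u$-valued controls must remain in the parametrization, one has to check that the pasted processes $\theta^d$ inherit the pointwise constraint $\theta^d(s,\cdot)(\omega) \in \{g_v : v \in \v\}$ and the $L^1(\mu)$-integrability uniformly enough that Assumption \ref{ass1_d} plus the property \eqref{eq_property_of_u_d} preserves the admissibility.

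Next, from the DPP I would derive the viscosity sub- and supersolution properties of $u$ by the standard route: applying \ito's formula to $\varphi(s, x + B^{t,\theta}_s)$ for a smooth test function $\varphi$ touching $u$ at $(t,x)$, dividing by $h$ and sending $h \downarrow 0$ along constant controls $(g_v,p,Q)$ gives the subsolution inequality with the supremum taken over $\tilde \u$; the supersolution inequality is obtained analogously using $\epsilon$-optimal controls. This yields that $u$ solves \eqref{eq_IPDE_backwards_d} with terminal condition $u(T,x)=\phi(x)$, written via the \eqref{eq_integroPDE2_d} form of $G$.

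Finally, by Theorem \ref{tw_levykhintchine_d} and the PDE in \eqref{eq_integroPDE_d}, the function $(t,x) \mapsto \GE[\phi(x + X_{T-t})]$ is also a viscosity solution of the same equation with the same terminal condition. Invoking the comparison/uniqueness result for this non-local equation (as in \cite{Peng_levy}, applied with Assumption \ref{ass1_d}), I conclude $u(0,x) = \GE[\phi(x + X_T)]$. Specialising to $x=0$ gives the representation on random variables of the form $\phi(X_{t_1},\ldots,X_{t_n}-X_{t_{n-1}})$ by iterating the one-step identity through the tower property for both $\GE[\cdot|\F_{t_i}]$ and the conditional expectations of the shifted controls under $\P^\theta$. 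Density of $\lip$ in $L^1_G(\Omega)$ combined with continuity of both sides in the $\|\cdot\|_1$-norm then extends $\GE[\xi] = \sup_{\theta \in \a^{\u}_{0,\infty}} \E^{\P^\theta}[\xi]$ to arbitrary $\xi \in L^1_G(\Omega)$, completing the proof.
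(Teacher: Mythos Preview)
The paper does not prove this statement at all: it is quoted verbatim as ``Theorem 11--13 and Corollary 14 in \cite{moj}'' and is used as background input, so there is no in-paper proof to compare against. Your outline is the classical stochastic-control route (DPP $\Rightarrow$ viscosity sub/supersolution $\Rightarrow$ uniqueness $\Rightarrow$ representation on $\lip$ $\Rightarrow$ density extension), which is indeed the architecture of the argument in the cited reference; you have also correctly flagged the genuine technical crux, namely that the concatenation/pasting of controls must preserve the pointwise constraint $(\theta^d(s,\cdot),\theta^{1,c}_s,\theta^{2,c}_s)\in\tilde\u$ and the integrability, which is where Assumption~\ref{ass1_d} and the parametrization via $g_v$ from Remark~\ref{rem_g_v_d} are actually used.

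Two places where your sketch is thin and would not pass as a proof without more work. First, the DPP as you write it requires a priori regularity of $u$ (at least joint continuity) to make the measurable-selection and conditioning steps go through; in the jump setting this is typically obtained beforehand from the Lipschitz property of $\phi$ together with the moment bound \eqref{eq_property_of_u_d} and Assumption~\ref{ass1_d}, and you should state and prove that lemma explicitly. Second, the passage from the one-increment identity $u(0,x)=\GE[\phi(x+X_T)]$ to the multi-time representation on $\lip$ is not just ``iterating the tower property'': on the control side you must show that $\sup_{\theta}\E^{\P^\theta}[\psi(X_{t_1},\ldots,X_{t_n}-X_{t_{n-1}})]$ factorizes through conditional suprema, which again hinges on the concatenation property of $\a^{\u}_{0,\infty}$ and on the fact that the law of $B^{t_i,\theta}_{t_{i+1}}$ given $\G_{t_i}$ runs over the same family as the unconditional laws. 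This is exactly the content of the ``Theorem 13'' part of the citation and deserves an explicit argument rather than a parenthetical.
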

We can define the capacity $c$ associated with $\GE$.
\begin{defin}
Let $\GE[.]$ has a following representation $\GE[.]=\sup_{\P\in\mathfrak{P}}\,\E^{\P}[.]$. Then capacity $c$ associated with $\GE$ is defined as
\[
	c(A):=\sup_{\P\in\mathfrak{P}}\P(A),\quad A\in\B(\Omega).
\]
We will say that a set $A\in\B(\Omega)$ is polar if $c(A)=0$. We say that a property holds quasi-surely (q.s.) if it holds outside a polar set. 
\end{defin}
\begin{rem}\label{rem_extension_GE_d}
We can also extend our sublinear expectation to all random variables $Y$ on $\Omega_T$ (or $\Omega$) for which the following expression has sense
\[
	\GE[Y]:=\sup_{\P\in\mathfrak{P}}\, \E^{\P}[Y].
\]
We can thus can also extend the definition of the norm $\|.\|_p$ and  define following spaces
\begin{enumerate}
	\item Let $L^0(\Omega_T)$ be the space of all random variables on $\Omega_T$. Let $\L^p(\Omega_T),\ p\geq 1$ be a space of all equivalence classes of functions in $L^0(\Omega_T)$ s.t. $\|.\|_p$ norm is finite. 
		\item Let $B_b(\Omega_T)$ be the space of all  bounded random variables in $L^0(\Omega_T)$. The completion of $B_b(\Omega_T)$ in the norm $\|.\|_p$ will be denoted as $\L^p_b(\Omega_T)$.
	\item Let $C_b(\Omega_T)$ be the space of all continuous and bounded random variables in $L^0(\Omega_T)$. The completion of $C_b(\Omega_T)$ in the norm $\|.\|_p$ will be denoted as $\L^p_c(\Omega_T)$.
	\item Let $C_{b,lip}(\Omega_T)$ be the space of all Lipschitz continuous random variables in $C_b(\Omega_T)$. The completion of $C_{b,lip}(\Omega_T)$ in the norm $\|.\|_p$ will be denoted as $\L^p_{c,lip}(\Omega_T)$.
\end{enumerate}
\end{rem}

\begin{defin}
	We will say that the random variable $Y\in L^0(\Omega)$ is quasi-continuous, if for all $\epsilon>0$ there exists an open set $O$ such that $c(O)<\epsilon$ and $Y|_{O^c}$ is continuous. For convenience, we will often use the abbreviation \emph{q.c}.
\end{defin}
It is well known that the following characterization of $\L^p_b(\Omega)$ and $\L^p_c(\Omega)$ holds (see Theorem 25 in \cite{Denis_function_spaces}).
\begin{prop}\label{prop_characterization_of_Lpc+Lpg_d}
	For each $p\geq 1$ one has
\begin{align*}
	\L^p_b(\Omega)=\{Y\in \L^p(\Omega)\colon \lim_{n\to\infty}\GE[|Y|^p\I_{\{|Y|>n\}}]=0\}
\end{align*}
and
\begin{align*}
	\L^p_c(\Omega)=\{Y\in \L^p(\Omega)\colon \lim_{n\to\infty}\GE[|Y|^p\I_{\{|Y|>n\}}]=0, \ Y \textrm{ has a q.c. version}\}.
\end{align*}
\end{prop}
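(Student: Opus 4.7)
The plan is to prove the two characterizations in parallel, each by showing the two inclusions. The backbone of every direction is a truncation argument paired with a Chebyshev-type inequality for the capacity $c$, plus a capacity version of the Borel--Cantelli lemma (which follows from the countable subadditivity of $c$, itself inherited from countable subadditivity of each $\P\in\mathfrak{P}$).

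For the $\L^p_b(\Omega)$ identity, the inclusion $\supseteq$ is immediate by truncation: if $Y\in\L^p(\Omega)$ satisfies the tail condition, put $Y_n:=(-n)\vee Y\wedge n\in B_b(\Omega)$; then $|Y-Y_n|^p\leq |Y|^p\I_{\{|Y|>n\}}$, so $\|Y-Y_n\|_p\to 0$. For $\subseteq$, fix $Y\in\L^p_b(\Omega)$ with approximants $Y_k\in B_b(\Omega)$, $\|Y-Y_k\|_p\to 0$. Use the convexity inequality $\GE[|Y|^p\I_{\{|Y|>n\}}]\leq 2^{p-1}\GE[|Y-Y_k|^p]+2^{p-1}\GE[|Y_k|^p\I_{\{|Y|>n\}}]$. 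For each fixed $k$, Chebyshev gives $c(\{|Y|>n\})\leq \GE[|Y|^p]n^{-p}\to 0$, so the second term tends to zero as $n\to\infty$. Taking $\limsup_n$ leaves a bound of $2^{p-1}\|Y-Y_k\|_p^p$, which is arbitrarily small, and the tail condition follows.

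For the $\L^p_c(\Omega)$ identity, the same truncation delivers $\supseteq$ once we know each bounded q.c.\ random variable is approximable by elements of $C_b(\Omega)$: given $Z$ bounded and q.c., for every $\epsilon>0$ pick an open $O$ with $c(O)<\epsilon$ such that $Z|_{O^c}$ is continuous, and extend $Z|_{O^c}$ by the Tietze extension theorem (applicable because the Skorohod space $\d0$ is Polish, hence normal) to a continuous $\tilde Z\in C_b(\Omega)$ with $\|\tilde Z\|_\infty\leq\|Z\|_\infty$. Then $\GE[|Z-\tilde Z|^p]\leq (2\|Z\|_\infty)^p c(O)<(2\|Z\|_\infty)^p\epsilon$, so $Z\in\L^p_c(\Omega)$. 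For $\subseteq$, the tail condition follows exactly as in the $\L^p_b$ case, so the only substantive task is to extract a q.c.\ version. Given $Y\in\L^p_c(\Omega)$ with $Y_k\in C_b(\Omega)$ and $\|Y-Y_k\|_p\to 0$, pass to a subsequence (still denoted $Y_k$) with $\sum_k 2^{kp}\|Y-Y_k\|_p^p<\infty$; Chebyshev yields $\sum_k c(A_k)<\infty$ where $A_k:=\{|Y-Y_k|>2^{-k}\}$. By subadditivity of $c$, the sets $B_N:=\bigcup_{k\geq N}A_k$ satisfy $c(B_N)\to 0$; on $B_N^c$ the sequence $Y_k$ converges uniformly, hence to a continuous limit. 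Defining $\tilde Y$ to equal that limit on $\bigcup_N B_N^c$ (and arbitrarily elsewhere, on a polar set) produces a q.c.\ version of $Y$.

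The step I expect to be most delicate is the q.c.\ version extraction in the last inclusion: one must juggle the subsequence selection, the Borel--Cantelli consequence for $c$, and the verification that the limit truly agrees with $Y$ (equivalently, is in the same $\|\cdot\|_p$-equivalence class) quasi-surely rather than merely $\P$-a.s.\ for each $\P\in\mathfrak{P}$. Here one exploits that the pointwise limit of a subsequence converging in $L^p(\P)$ exists $\P$-a.s.\ for every $\P\in\mathfrak{P}$, combined with the uniform control $|Y-Y_k|\leq 2^{-k}$ on $B_N^c$, to conclude that $\tilde Y=Y$ on $B_N^c$ up to a polar set; taking $N\to\infty$ gives quasi-sure equality. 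Everything else is routine: sublinearity, monotonicity, and the Chebyshev inequality for $\GE$.
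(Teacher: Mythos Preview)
The paper does not prove this proposition; it simply records it as well known and cites Theorem~25 in \cite{Denis_function_spaces}. Your reconstruction follows the Denis--Hu--Peng argument closely, and the $\L^p_b$ part as well as the $\supseteq$ inclusion for $\L^p_c$ (truncation plus Tietze extension on the Polish space $\d0$) are correct.

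There is one genuine gap, in the extraction of a q.c.\ version for $\L^p_c(\Omega)\subseteq\{\ldots\}$. You take $A_k:=\{|Y-Y_k|>2^{-k}\}$, but here $Y$ is only a measurable representative, so the sets $A_k$ and $B_N=\bigcup_{k\ge N}A_k$ are merely Borel, not open. The definition of quasi-continuity used in the paper requires an \emph{open} set $O$ with $c(O)<\epsilon$ such that the restriction to $O^c$ is continuous; continuity of the uniform limit on the possibly non-closed set $B_N^c$ does not yield this, and the capacity $c(\cdot)=\sup_{\P\in\mathfrak{P}}\P(\cdot)$ is not, in general, outer regular on Borel sets (outer regularity of each $\P$ does not pass to the supremum because the approximating open sets depend on $\P$).

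The standard fix, which is exactly what Denis--Hu--Peng do and what the paper itself invokes in Step~2 of the proof of Proposition~\ref{prop_characterization_spaces_v_d}, is to replace your $A_k$ by $A_k:=\{|Y_{k+1}-Y_k|>2^{-k}\}$. These are open because each $Y_k\in C_b(\Omega)$, so $B_N$ is open, $B_N^c$ is closed, and the uniform Cauchy property of $(Y_k)_{k\ge N}$ on $B_N^c$ produces a continuous limit $\tilde Y$ there. The verification that $\tilde Y=Y$ q.s.\ then proceeds along the lines you sketched. With this one change your argument is complete.
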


Ren proved in \cite{Ren} that under Assumption \ref{ass1_d} we have the following inclusion $L^p_G(\Omega_T)\subset \L^p_c(\Omega_T)$. In \cite{moj} we proved that for a $G$-\levy process with finite activity we have that $L^p_G(\Omega_T)=\L^p_c(\Omega_T)$. Modifying the same proof we can obtain this equality also in our more general framework, what is done in the following proposition and theorem.
\begin{prop}\label{prop_lip_in_Lg_d}
	We have the following inclusion
	\[C_{b,lip}(\Omega_T)\subset L^1_G(\Omega_T).\]
    As a consequence
    \[\L^p_{c,lip}(\Omega_T)\subset L^p_G(\Omega_T).\]
\end{prop}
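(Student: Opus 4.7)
The plan is to approximate any $Y\in C_{b,lip}(\Omega_T)$ in the $\|\cdot\|_1$-norm by cylinder functions from $\lipT$; this will immediately put $Y\in L^1_G(\Omega_T)$ by definition of the completion, and the second inclusion will follow by a boundedness upgrade and density. Fix $Y\in C_{b,lip}(\Omega_T)$ with Lipschitz constant $L$ and $M:=\|Y\|_{\infty}$, a sequence of partitions $\pi_n=\{0=t_0^n<\ldots<t_{k_n}^n=T\}$ with $|\pi_n|\to 0$, and the discretization $(\Pi_n\omega)(t):=\omega(t_i^n)$ for $t\in[t_i^n,t_{i+1}^n)$, $(\Pi_n\omega)(T):=\omega(T)$. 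Setting $Y_n:=Y\circ\Pi_n$, one readily checks that $Y_n$ is a bounded Lipschitz function of the increment vector $(X_{t_1^n},X_{t_2^n}-X_{t_1^n},\ldots,X_{t_{k_n}^n}-X_{t_{k_n-1}^n})$, so $Y_n\in\lipT$ and $\|Y_n\|_{\infty}\leq M$.

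By Theorem \ref{tw_rep_sub_lin_d} one has
\[
\|Y-Y_n\|_1=\sup_{\theta\in\a^{\u}_{0,\infty}}\E^{\P_0}\bigl[|Y(B^{0,\theta})-Y(\Pi_nB^{0,\theta})|\bigr].
\]
Since $B^{0,\theta}$ is $\P_0$-a.s.\ càdlàg, $\Pi_nB^{0,\theta}\to B^{0,\theta}$ in the Skorokhod topology and dominated convergence (using $|Y-Y_n|\leq 2M$) gives pointwise-in-$\theta$ convergence of the integrand. To make this uniform I would combine the Lipschitz estimate $|Y(\omega)-Y(\Pi_n\omega)|\leq L\cdot d_S(\omega,\Pi_n\omega)\wedge 2M$ (where $d_S$ is the Skorokhod distance) with an explicit bound of the form
\[
\E^{\P_0}\bigl[d_S(B^{0,\theta},\Pi_nB^{0,\theta})\wedge 2M\bigr]\leq C_1|\pi_n|+C_2\sqrt{|\pi_n|}+A_n^{\epsilon}+T\epsilon^{1-q}\sup_{v\in\v}\int_{\{|z|<1\}}|z|^qv(dz),
\]
obtained after a time-change that aligns the finitely many jumps of $B^{0,\theta}$ of size $\geq\epsilon$. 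Here $C_1|\pi_n|$ and $C_2\sqrt{|\pi_n|}$ control the drift and Brownian contributions via the universal coefficient bound \eqref{eq_property_of_u_d}; the residual $A_n^{\epsilon}$ accounts for the rare event that two large jumps fall in a common partition interval and vanishes as $|\pi_n|\to 0$ for each fixed $\epsilon$, uniformly in $\theta$; the last term dominates the accumulated small-jump mass $\E^{\P_0}\sum_{s\leq T}|\Delta B^{0,\theta}_s|\I_{\{|\Delta B^{0,\theta}_s|<\epsilon\}}\leq T\sup_{v\in\v}\int_{\{|z|<\epsilon\}}|z|v(dz)$, which by Assumption \ref{ass1_d} is bounded uniformly in $\theta$ and tends to $0$ with $\epsilon$. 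Sending $|\pi_n|\to 0$ and then $\epsilon\to 0$ yields $\|Y-Y_n\|_1\to 0$, so $Y\in L^1_G(\Omega_T)$.

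For the stated consequence, the elementary bound $|Y-Y_n|^p\leq(2M)^{p-1}|Y-Y_n|$ gives $\|Y-Y_n\|_p\to 0$ for every $p\geq 1$, hence $Y\in L^p_G(\Omega_T)$; combined with the completeness of $L^p_G(\Omega_T)$ in $\|\cdot\|_p$ and the fact that $\L^p_{c,lip}(\Omega_T)$ is by definition the $\|\cdot\|_p$-closure of $C_{b,lip}(\Omega_T)$, this yields $\L^p_{c,lip}(\Omega_T)\subset L^p_G(\Omega_T)$. The delicate point — and the only place where Assumption \ref{ass1_d} is genuinely used — is the uniform-in-$\theta$ control of the small-jump contribution: the bare bound $\sup_{v\in\v}\int|z|v(dz)<\infty$ from \eqref{eq_property_of_u_d} does \emph{not} force $\sup_{v\in\v}\int_{\{|z|<\epsilon\}}|z|v(dz)\to 0$ as $\epsilon\to 0$, whereas the uniform $|z|^q$-integrability with $q<1$ precisely produces the bound $\epsilon^{1-q}\cdot\mathrm{const.}$ that drives the whole argument.
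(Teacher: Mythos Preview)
Your skeleton coincides with the paper's: discretize $Y$ via the piecewise-constant map $\Pi_n$, bound $|Y-Y_n|$ by $L\cdot d_S(\omega,\Pi_n\omega)\wedge 2M$, realign the finitely many jumps of size $>\epsilon$ by a time change, and handle the bad event ``two large jumps share a mesh cell'' separately (its capacity tends to $0$ uniformly in $\theta$ by reducing to jumps of $N$ of size $>\eta^\epsilon$, exactly the paper's use of Remark~\ref{rem_g_v_d}). The genuine divergence is in how the oscillation on the \emph{good} event is controlled. The paper keeps the path whole and bounds the post-time-change error by the c\`adl\`ag modulus, getting the pathwise estimate $d_S(\omega,\Pi_n\omega)\wedge 2K\le[2T/n+2w'_\omega(2T/n)+\epsilon]\wedge 2K$; it then passes to the limit under $\hat\E$ via upper semicontinuity of $w'$ and a capacity--monotone-convergence lemma (Lemma~\ref{lem_convergence_usc_d}), which in turn rests on the relative compactness of $\mathfrak P$. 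You instead split $B^{0,\theta}$ into drift, Brownian, small-jump and large-jump parts and bound each in $\P_0$-expectation uniformly in $\theta$. Your route is more direct and quantitative: it avoids the topological apparatus (USC plus capacity continuity on decreasing closed sets) and makes transparent exactly where the $q<1$ hypothesis of Assumption~\ref{ass1_d} is spent, namely to force $\sup_{v\in\v}\int_{\{|z|<\epsilon\}}|z|\,v(dz)\le\epsilon^{1-q}\cdot\mathrm{const}\to 0$. The paper's route is cleaner pathwise (no semimartingale decomposition needed) but trades this for the auxiliary Lemma~\ref{lem_convergence_usc_d} and Ren's compactness. Two cosmetic corrections to your displayed bound: the Brownian modulus over a mesh of width $\delta$ carries a $\sqrt{\delta\log(1/\delta)}$ rather than $\sqrt\delta$, and the $A_n^\epsilon$ term should appear as $2M\cdot A_n^\epsilon$ since on the bad set one simply invokes the uniform bound $2M$; neither affects the argument.
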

\begin{tw}\label{L_G_equal_L_c_d}
	The space $C_{b,lip}(\Omega_T)$ is dense in $C_b(\Omega_T)$ under the norm $\GE[|.|]$. Thus $L^1_G(\Omega_T)=\L^1_c(\Omega_T)$.
\end{tw}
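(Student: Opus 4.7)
The plan is to reduce the equality $L^1_G(\Omega_T)=\L^1_c(\Omega_T)$ to the density statement. By Remark \ref{rem_extension_GE_d}, $\L^1_c(\Omega_T)$ and $\L^1_{c,lip}(\Omega_T)$ are by definition the $\|\cdot\|_1$-completions of $C_b(\Omega_T)$ and $C_{b,lip}(\Omega_T)$, so once density is established these two completions coincide. Combining this with the inclusion $\L^1_{c,lip}(\Omega_T)\subset L^1_G(\Omega_T)$ from Proposition \ref{prop_lip_in_Lg_d} and the opposite inclusion $L^1_G(\Omega_T)\subset\L^1_c(\Omega_T)$ of Ren, one obtains the theorem.

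For the density step, fix $f\in C_b(\Omega_T)$ with $\|f\|_\infty\le M$ and let $\rho$ be a complete metric generating the Skorohod topology on $\Omega_T$, normalized so that $\rho\le 1$. The classical inf-convolution
\[
f_n(\omega):=\inf_{\omega'\in\Omega_T}\bigl\{f(\omega')+n\rho(\omega,\omega')\bigr\}
\]
is $n$-Lipschitz, bounded by $M$ in absolute value, and monotone nondecreasing in $n$. If $\omega'_n$ is a near-minimizer, then $n\rho(\omega,\omega'_n)\le f_n(\omega)-f(\omega'_n)+1/n\le 2M+1/n$, so $\rho(\omega,\omega'_n)\to 0$, and continuity of $f$ yields $f_n(\omega)\uparrow f(\omega)$ for every $\omega\in\Omega_T$. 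In particular $f_n\in C_{b,lip}(\Omega_T)$.

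To upgrade pointwise convergence to convergence in $\|\cdot\|_1$, I would use tightness of the representing family $\mathfrak P$ from Theorem \ref{tw_rep_sub_lin_d}. Given $\varepsilon>0$, choose a compact set $K_\varepsilon\subset\Omega_T$ with $\sup_{\P\in\mathfrak P}\P(K_\varepsilon^c)<\varepsilon$. On $K_\varepsilon$ the monotone sequence of continuous functions $(f_n)$ converges to the continuous limit $f$, so Dini's theorem gives uniform convergence, and then
\[
\GE[|f_n-f|]=\sup_{\P\in\mathfrak P}\E^{\P}[|f_n-f|]\le\sup_{\omega\in K_\varepsilon}|f_n(\omega)-f(\omega)|+2M\varepsilon.
\]
Letting $n\to\infty$ and then $\varepsilon\to 0$ finishes the argument.

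The main obstacle is the tightness of $\mathfrak{P}$ on the Skorohod space, since this is not a formal consequence of Theorem \ref{tw_rep_sub_lin_d}. It has to be extracted from the \itolevy representation $B^{0,\theta}=\int\theta^{1,c}\,ds+\int\theta^{2,c}\,dW+\int\int\theta^d(s,z)N(ds,dz)$, using the uniform bound \eqref{eq_property_of_u_d} to control the drift and diffusion parts by a standard Kolmogorov criterion, and using Assumption \ref{ass1_d} (in particular the $q<1$ integrability of the small jumps, which gives a uniform bound on $\int_{|z|<1}|g_v(z)|^q\mu(dz)$) together with an Aldous-type criterion to control the pure-jump part. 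This is exactly the point at which the infinite-activity generalization requires genuinely new work compared to the finite-activity argument of \cite{moj}.
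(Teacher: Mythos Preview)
Your reduction to the density statement and the inf-convolution plus Dini argument are correct and constitute a clean proof. The paper itself omits the proof, stating it is identical to Theorem 21 in \cite{moj}, so a line-by-line comparison is not possible from this text; your argument is a natural candidate for what that proof amounts to.

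The one point to correct is your assessment of the ``main obstacle''. The tightness of $\mathfrak{P}$ on the Skorohod space is not a gap you need to fill: under Assumption \ref{ass1_d} the relative compactness of $\mathfrak{P}$ is established in Ren \cite{Ren}, and the present paper invokes it explicitly as a known fact (see the proof of Theorem \ref{tw_qc_of_phi_d}, where one reads ``Ren proved in \cite{Ren} that the family $\mathfrak{P}$ is relatively compact'', and the end of the proof of Theorem \ref{tw_integral_qc_on_R_d}). By Prohorov's theorem this yields the compact sets $K_\varepsilon$ you need. So the Aldous/Kolmogorov programme you sketch is unnecessary here; with Ren's result taken as input, your proof is complete as written.
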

The proof of Proposition \ref{prop_lip_in_Lg_d} is given in Appendix as it is  similar to Propostion 18 in \cite{moj}, whereas the proof of Theorem \ref{L_G_equal_L_c_d} is skipped as it is identical to the proof of Theorem 21 in the same paper.

\section{Compensating a jump part of a $G$-\levy process}\label{sec_compensation_d}

Let $X$ be a $d$-dimensional $G$-\levy process on some sublinear expectation space $(\Omega,L^1_G(\Omega),\GE[.])$ associated with a set $\u$ via \levykhintchine decomposition. We know by definition that there exists a sublinear expectation space $(\tilde \Omega,L^1_G(\tilde \Omega),\tilde\E[.])$ and a $2d$-dimensional $G$-\levy process $(X^c,X^d)$ such that the distribution of $X^c+X^d$ is the same as $X$ and $\lim_{t\downarrow0}\tilde\E[(X^c_t)^3]t^{-1}=0$ and $\tilde \E[|X^d_t|]\leq Ct$ for some constant $C$. 

Hu and Peng showed in \cite{Peng_levy} that $\tilde \Omega$ might be chosen to be $\D_0(\R_+,\R^{2d})$. The jump part $X^d$ satisfies then the \levykhintchine formula with the set $\v\times\{0\}\times\{0\}$, where $\v$ has the definition as in eq. \eqref{eq_def_v_d}. One may ask the question how to compensate the $X^d$ to obtain a $G$-martingale. It turns out that it might be done in two different manners.

The first alternative is just to substract the expectation of $X^d_t$. It is easy to check directly by the construction of $\tilde \E$ via the viscosity solution of an IPDE that $\tilde \E[X^d_t]=t\sup_{v\in\v}\int_{\r0}z v(dz)$. Consider then the process
\[Y_t:=X_t^d-t\sup_{v\in\v}\int_{\r0}z v(dz).\]
It is easy to see that $Y$ is a $G$-\levy process associated with the \levy triple $\v\times\{0\}\times\{-\sup_{v\in\v}\int_{\r0}z v(dz)\}$ and hence it has stationary increments independent of the past and with $0$ expectation. Hence it must be a $G$-martingale. In \cite{moj2} we showed much stronger result for $G$-\itolevy integral compensated by its expectation under the assumption that the $G$-\levy process driving the integral is of finite activity.

However, it is also trivial to note that $-Y$ is a $G$-martingale iff $\v=\{v\}$, i.e. $Y$ is a classical \levy process without jump-measure uncertainty. Hence, if we want to have a compensating factor which would lead to a $G$-\levy process which is a symmetric $G$-martingale, we need to be slightly cleverer. The easiest way to do that is to introduce the drift uncertainty which would exactly compensate the jump measure uncertainty. We mainly consider a $G$-levy process $Z$ associated with the set
\[
	\{(v,0,-\int_{\r0}zv(dz))\colon v\in\v\}.
\]
We stress that usually such a process is defined on a different sublinear expectation space than $X^d$. It is not a problem as long as we are interested only in the distributional properties of a $G$-\levy process. We will return to that problem in Section \ref{sec_applications_d}.

It is easy to see that both $Z_t$ and $-Z_t$ have $0$ expectation. To see it note that $u(t,x)=\pm x$ is a viscosity solution of the folowing IPDE
\begin{align*}
	0&=\partial_t\, u(t,x)-\sup_{v\in\v}\left[\int_{\r0}[u(t,x+z)-u(t,x)]v(dz)+\langle-\int_{\r0}z\,v(dz),Du(t,x) \rangle\right],\\
	u(0,x)&=\pm x.
\end{align*}
Hence both $Z$ and $-Z$ are $G$-martingales as processes with independent stationary increments and $0$ expectation. We sumarize this section with the following Proposition.
\begin{prop}
	Let $X$ be a $G$-\levy process associated with a set $\u$. We may define a sublinear expectation $\tilde \E$ on $\D_0(\R_+,\R^{2d})$ such that the canonical process $Y_t(\omega^d,\omega^c):=(X_t^d(\omega^d),X^c_t(\omega^c)):=(\omega^d_t,\omega^c_t)$ is a $\tilde G$-\levy process (under $\tilde \E[.]$ ) associated with a set $\tilde \u$ defined as
	\[
		\tilde \u:=\{((v\otimes 0) ,(-\int_{\r0}zv(dz),p+\int_{\r0}zv(dz)), (0,q))\colon (v,p,q)\in\u\}.
	\]
	Then $X^c_t+X^d_t$ has the same distribution as $X_t$, $\lim_{t\downarrow0}\tilde\E[(X^c_t)^3]t^{-1}=0$ and $\tilde \E[|X^d_t|]\leq Ct$ for some constant $C>0$ and $X^d$ is a symmetric $\tilde G$-martingale.
\end{prop}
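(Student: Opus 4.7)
The plan is to invoke the construction of sublinear expectations from Theorem~\ref{tw_levykhintchine_d} (and the subsequent existence theorem) with the parametrizing set $\tilde\u$, and then verify the remaining claims via a single viscosity-uniqueness argument applied to the generator of the $2d$-dimensional process.

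First I would check that $\tilde\u$ satisfies the integrability bound \eqref{eq_property_of_u_d}. The only delicate term is the new drift $-\int_{\r0}zv(dz)$, whose Euclidean norm is dominated by $\int_{\r0}|z|v(dz)$ and hence uniformly bounded over $v\in\v$ by the original bound on $\u$; the measure $v\otimes 0$ sits inside $\R^d\times\{0\}$ and therefore integrates $|w|$ with the same bound; the diffusion $(0,q)$ trivially inherits the trace bound. Hence the existence theorem recalled after Theorem~\ref{tw_levykhintchine_d} produces the required $\tilde\E$ on $\D_0(\R_+,\R^{2d})$.

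Next, to prove $X^d_t+X^c_t\sim X_t$, fix $\phi\in\cliprd$ and set $f(y^d,y^c):=\phi(y^d+y^c)\in C_{b,Lip}(\R^{2d})$. Then $Df(0,0)=(D\phi(0),D\phi(0))$ and $D^2f(0,0)$ is the block matrix whose every block equals $D^2\phi(0)$, so a direct computation of the symbol of $\tilde G$ at $f$ gives
\begin{align*}
&\int_{\R^{2d}_0}[f(w)-f(0)](v\otimes 0)(dw)+\langle Df(0,0),(-\int zv(dz),p+\int zv(dz))\rangle\\
&\qquad+\tfrac12\tr[D^2f(0,0)\,\mathrm{diag}(0,qq^T)]=\int_{\r0}[\phi(z)-\phi(0)]v(dz)+\langle D\phi(0),p\rangle+\tfrac12\tr[D^2\phi(0)qq^T],
\end{align*}
the two $\int zv(dz)$ contributions cancelling in the drift, the jump measure ignoring the $y^c$-block, and the diffusion ignoring the $y^d$-block. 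Taking the supremum over $(v,p,q)\in\u$ yields $\tilde G[f(.)-f(0)]=G[\phi(.)-\phi(0)]$. Consequently, if $u$ solves \eqref{eq_integroPDE_d} with datum $\phi$, the function $g(t,y^d,y^c):=u(t,y^d+y^c)$ is a viscosity solution of the $2d$-dimensional IPDE for $\tilde\u$ with datum $f$; by uniqueness (Theorem 36 in \cite{Peng_levy}), $g(t,y^d,y^c)=\tilde\E[f((y^d,y^c)+(X^d_t,X^c_t))]$, and evaluating at $(0,0)$ gives $\tilde\E[\phi(X^d_t+X^c_t)]=\E[\phi(X_t)]$.

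Finally, the symmetric martingale property of $X^d$ and the moment bounds are short IPDE computations. The marginal law of $X^d$ under $\tilde\E$ is that of a pure-jump $\tilde G$-\levy process parametrized by $\{(v,-\int zv(dz),0)\colon (v,p,q)\in\u\}$; plugging $u(t,x)=\pm x$ into the corresponding IPDE shows $\tilde\E[\pm X^d_t]=0$, so together with the stationary independent increments both $X^d$ and $-X^d$ are $\tilde G$-martingales, i.e.\ $X^d$ is symmetric. The bound $\tilde\E[|X^d_t|]\leq Ct$ comes from the same machinery applied to a smooth Lipschitz majorant of $|x|$, using $\sup_{v\in\v}\int_{\r0}|z|v(dz)<\infty$, while $\tilde\E[|X^c_t|^3]=O(t^{3/2})$ is immediate from the generalized $G$-Brownian scaling of the $(0,q)$-block. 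The main technical hurdle is verifying that $g(t,y^d,y^c)=u(t,y^d+y^c)$ is a viscosity (sub- and super-) solution of the $2d$-dimensional $\tilde G$-IPDE; the symbol computation above does the job on smooth test functions, and the reduction to viscosity solutions is the one step that must be handled carefully, via the comparison principle in dimension $2d$.
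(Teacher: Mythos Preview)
Your approach is correct and in the same spirit as the paper's. In fact, the paper does not give a separate formal proof of this proposition at all: it is stated as a summary of the preceding discussion in Section~\ref{sec_compensation_d}, where only the symmetric-martingale property is argued, and precisely by the same device you use --- checking that $u(t,x)=\pm x$ solves the relevant IPDE so that $\tilde\E[\pm X^d_t]=0$, and then invoking stationary independent increments.

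What you add beyond the paper is the explicit verification of the integrability bound \eqref{eq_property_of_u_d} for $\tilde\u$, and the symbol computation showing $\tilde G[\phi(\cdot^d+\cdot^c)]=G[\phi(\cdot)]$, from which the distributional identity $X^d_t+X^c_t\sim X_t$ follows via viscosity uniqueness. The paper takes these points as understood; your outline is the natural way to make them precise. One small remark: when you compute the third-moment bound for $X^c$, keep in mind that the drift of the $c$-block, $p+\int_{\r0}zv(dz)$, still depends on $v$, so the marginal of $X^c$ is a generalized $G$-Brownian motion with drift set $\{p+\int_{\r0}zv(dz):(v,p,q)\in\u\}$; this set is bounded by \eqref{eq_property_of_u_d}, so the $O(t^{3/2})$ scaling you invoke is justified, but the dependence on $v$ should be acknowledged rather than hidden behind ``$(0,q)$-block''.
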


\section{Poisson random measure and a Poisson integral associated with a $G$-\levy process}\label{sec_Poiss_random_d}
In \cite{moj} we introduced a Poisson jump measure associated with a $G$-\levy process with finite activity. In the same paper we defined a pathwise integral $\int_{t}^T\int_{\r0}\,K(s,z)\,N(ds,dz)$ w.r.t. that measure for regularly enough random fields $K$. We proved that the integral defined in such a way has good properties: it is continuous as an operator into $L^p_G(\Omega_T),\ p=1,2$, it satisfies the \ito formula etc. The class of integrands considered in that paper was large enough to consider the martingale representation, as it was shown in \cite{moj2} (see Theorem 25). However, the analysis carried out in the two papers was done under the assumption of some continuity of the integrand w.r.t. $z$, i.e. jump size. That assumption prevented us from considering a much simpler Poisson integrals $\int_{A}\phi(z)N([0,t],dz),\ A\in \B(\r0), \ 0\notin \bar A$ and integrals w.r.t. $G$-\levy processes which might not have finite activity. We also didn't say anything about the properties of a Poisson random measure. In this section we will deal with both questions.

Some of the results in the subsequent section will be given under the following assumption:
\begin{ass}\label{ass_uniform_integ_d}
	Let the family of \levy measures have the following property: there exists $p>1$ such that
	\[
		\sup_{v\in\v}\int_{\{|z|\geq 1\}}|z|^pv(dz)<\infty.
	\]
\end{ass}

First, let us introduce a Poisson random measure for a \levy process $X$ on a canonical space $\d0$ associated with a set $\u$ via \levykhintchine formula. Let also $\v$ be a set of \levy measures considered in $\u$. For such a \levy process we introduce a Poisson random measure $L$ defined by
\[
	L(]s,t],A):=\sum_{s<u\leq t} \I_{A}(\Delta X_u),\quad 0\leq s<t,\ A\in\B(\r0),\ 0\notin \bar A.
\]
Note that $L(]s,t],A)$ is well-defined as all paths of $X$ are \cadlag functions. Moreover, it is obvious that $L(]s,t],A)\in L^0(\Omega_T)$. If we consider only interval $]0,t]$ we will often shorten the notation and write $L(t,A)$ instead of $L(]0,t],A)$.

In the same way we define the Poisson integral. Let $\phi$ be a detereministic Borel function on $\R^d$ which is finite on $A\in\B(\r0),\ 0\notin \bar A$. We introduce
\[
	\int_A \phi(z)L(t,dz):=\sum_{0<u\leq t} \phi(\Delta X_u)\I_{A}(\Delta X_u).
\]
The integral is well defined and it belongs to $L^0(\Omega_T)$. We remind that we have extended the notion of the sublinear expectation $\GE[.]$ using its representation as an upper-expectation (compare with Remark \ref{rem_extension_GE_d}). This enables us to examine the continuity w.r.t. $\GE[|.|]$-norm of the integral as an operator or to study later the distributional properties of a process $t\mapsto\int_A \phi(z)L(t,dz)$. To study those problems we will introduce and characterize the function spaces on $\r0$.

\subsection{Function spaces on $\r0$ and their characterization}
We introduce the following capacity related to $\v$.
\begin{defin}
	For a set of \levy measures $\v$ define a set function $c^{\v}$ on $\B(\r0)$ by
	\[
		c^{\v}(A):=\sup_{v\in\v}\, v(A),\quad A\in\B(\r0).
	\]
	We will call this function $\v$-capacity. We will say that a set $A$ is $\v$-polar if $c^{\v}(A)=0$. Similarly we will say that a property holds ${\v}$-quasi-surely (abbr. $\v$-q.s.) if it holds outside a $\v$-polar set. Finally, we will say that a function $f:\r0\to\R$ is $\v$-quasi-continuous (abbr. $\v$-q.c.), if for every $\epsilon>0$ there exists an open subset $O$ of $\r0$ such that $c^{\v}(O)<\epsilon$ and $f|_{O^c}$ is a continuous function.
\end{defin}
We introduce the following function spaces connected with $c^{\v}$. Let $A\in\B(\r0)$  and $p\geq1$.
\begin{enumerate}
\item $L^0(A)$ is the space of all Borel functions $f$ on $\R^d$ such that $f\equiv 0$ outside $A$.
	\item $\L^p(A,\v)$ is the space of all equivalent classes of functions  $f\in L^0(A)$ such that $\|f\|_{p,A,\v}^p:=\sup_{v\in\v}\int_A|f(z)|^pv(dz)<\infty$. Equivalent classes are taken w.r.t. the $\v$-quasi-sure equivalence.
 	\item $B_b(A)$ is space of all bounded Borel functions in $L^0(A)$. $C_b(A)$ is the space of all continuous functions in $B_b(A)$. Note that $C_b(Int\, A)=C_b(\bar A)$. 
 	\item If $0\notin \bar A$ then  $\L^p_b(A,\v)$ and $\L^p_c(A,\v)$ are defined as a completion of a $B_{b}(A)$ (respectively $C_b(A)$) under the norm $\|.\|_{p,A,\v}^p$. 
 	\item If $0\in \bar A$ then we define $\L^p_{b}(A,\v)$ and $\L^p_{b}(A,\v)$ spaces as a completion under  $\|.\|_{p,A,\v}$ norm of the following sets (respectively)
\[
	\bigcup_{\substack{B\subset A\\ 0\notin \bar B}}B_b(B)\quad\textrm{and}\quad
	\bigcup_{\substack{B\subset A\\0\notin \bar B}}C_b(B).
\]
Note that $\L^p_c(Int\, A,\v)=\L^p_c(\bar A,\v)$.
\end{enumerate}

\begin{rem}\label{rem_cv_capacity_d}
Note that $\v$-capacity is a Choquet capacity, however it is usually unnormed (or even infinite). One needs to mention that the general results from \cite{Denis_function_spaces} have been proven for normed capacities, so a priori they might not hold for our $\v$-capacity. Fortunately, many results do not require this condition. In particular, we have exactly the same characterization of spaces $\L^p_b(A,\v)$ and $\L^p_c(A,\v)$ as in Proposition \ref{prop_characterization_of_Lpc+Lpg_d}, if $0\notin \bar A$. 
\end{rem}

In order to generalize this characterization, we introduce the following natural definitions of tightness and the uniform integrability of functions w.r.t. the $c^{\v}$. 
\begin{defin}
	Fix $A\in\B(\r0)$ and $f\in \L^p(A,\v)$

	We will say that $f$ is  ${\v}$-tight if for all $\epsilon>0$ there exists a compact set $F\subset \bar A\cap \r0$ such that and $\sup_{v\in \v}\int_{F^c}|f(z)|dz<\epsilon$. 
	
	We will say that $f$ is $\v$-uniformly integrable if  $$\lim_{n\to\infty}\sup_{v\in\v}\int_{\r0} |f(z)|\I_{\{|f(z)|\geq n\}}v(dz)=0.$$
\end{defin}

\begin{prop}\label{prop_characterization_spaces_v_d}
Let $A\in \B(\r0)$.
Then we have the following characterization of spaces $\L^p_b(A,\v)$ and $\L^p_c(A,\v)$
\begin{align*}
	\L^p_b(A,\v)=\{f\in \L^p(A,\v)\colon  |f|^p \textrm{ is }{\v}\textrm{-tight and }\v\textrm{-uniformly integrable}\}
\end{align*}
and
\begin{align*}
	\L^p_c(A,\v)=\{f\in \L^p_b(A,\v)\colon f \textrm{ has a }\v\textrm{-q.c. version}\}.
\end{align*}
\end{prop}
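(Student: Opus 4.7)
The plan is to reduce the characterization to the case where $0 \notin \bar A$, in which $c^{\v}$ restricted to $A$ is a finite Choquet capacity. The key observation is that the L\'evy--Khintchine bound \eqref{eq_property_of_u_d} gives $M := \sup_{v \in \v} \int_{\r0} |z| v(dz) < \infty$, whence $c^{\v}(\{|z| > R\}) \leq M/R$ and $c^{\v}(B) \leq M/\mathrm{dist}(0,\bar B)$ for any Borel $B \subset \r0$ with $0 \notin \bar B$. For such $A$ the characterization reduces, after normalization by $c^{\v}(A)$, to Proposition 25 of \cite{Denis_function_spaces} (the analogue of Proposition \ref{prop_characterization_of_Lpc+Lpg_d} for the capacity $c^{\v}/c^{\v}(A)$); in this situation $\v$-tightness is automatic from the above bound at infinity for any bounded function on $A$.

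For general $A$ (possibly with $0 \in \bar A$), write $A_{\delta,R} := A \cap \{\delta \leq |z| \leq R\}$. For the forward inclusion $\L^p_b(A,\v) \subset \{f \in \L^p(A,\v) : |f|^p \text{ is } \v\text{-tight and } \v\text{-u.i.}\}$, each $f \in \L^p_b(A,\v)$ is by definition a $\|.\|_{p,A,\v}$-limit of functions $g_n \in B_b(B_n)$ with $B_n \subset A$ and $0 \notin \bar B_n$. Each such $g_n$ is bounded (hence $\v$-u.i.) and $\v$-tight (its support lies in $\bar B_n \cap \{|z| \leq R\}$ up to an error controlled by $c^{\v}(\{|z|>R\}) \leq M/R$). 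A standard triangle-inequality argument based on $|f|^p \leq 2^{p-1}(|f-g_n|^p + |g_n|^p)$ then transfers both properties to $f$.

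For the reverse inclusion, given $f \in \L^p(A,\v)$ with $|f|^p$ $\v$-tight and $\v$-u.i., pick a compact $F_\epsilon \subset \bar A \cap \r0$ with $\sup_v \int_{F_\epsilon^c} |f|^p v(dz) < \epsilon$. Then $F_\epsilon \subset A_{\delta,R}$ for some $\delta > 0$ and $R < \infty$, on which $c^{\v}$ is finite; by the first case $f \I_{F_\epsilon} \in \L^p_b(A_{\delta,R},\v) \subset \L^p_b(A,\v)$, and letting $\epsilon \to 0$ gives $f \in \L^p_b(A,\v)$. The characterization of $\L^p_c(A,\v)$ runs in parallel: given a $\v$-q.c. version $\tilde f$, introduce a continuous cutoff $\chi_\epsilon$ supported in a compact subset of $\r0$ with $\chi_\epsilon \equiv 1$ on $F_\epsilon$; then $\tilde f \chi_\epsilon$ is $\v$-q.c. (as a product of a $\v$-q.c. function and a continuous one), is bounded, and has support in some $A_{\delta,R}$, so the first case gives $\tilde f \chi_\epsilon \in \L^p_c(A_{\delta,R},\v) \subset \L^p_c(A,\v)$. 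Passing to the limit $\epsilon \to 0$ concludes.

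The main technical obstacle, as foreshadowed in Remark \ref{rem_cv_capacity_d}, is that the Choquet--Lusin-type theorems behind Proposition \ref{prop_characterization_of_Lpc+Lpg_d} require a finite capacity and therefore cannot be applied directly on $A$ when $0 \in \bar A$. The reduction to $A_{\delta,R}$ above is precisely what circumvents this, and the passage $\delta \to 0$, $R \to \infty$ is justified by the control on $c^{\v}(\{|z|>R\})$ above combined with the integrability near the origin embedded in Assumption \ref{ass1_d}.
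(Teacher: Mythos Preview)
Your approach is a genuine alternative to the paper's. The paper proves both characterizations from scratch, adapting the Denis--Hu--Peng arguments (their Propositions~18, 24 and Theorem~25) to the possibly infinite capacity $c^{\v}$: it shows directly that each approximant $g_n\in B_b(A_n)$ is $\v$-tight using the relative compactness of $\v$ on sets bounded away from $0$ (\cite{Peng_levy}, p.~14), transfers tightness and uniform integrability to the limit by triangle inequalities, and then handles the $\L^p_c$ characterization via a Tietze-extension construction combined with a $\v$-tightness cutoff. You instead localise to annuli $A_{\delta,R}$ on which $c^{\v}$ is finite and invoke the case $0\notin\bar A$ (granted by Remark~\ref{rem_cv_capacity_d}) as a black box; this is more modular and arguably cleaner, at the price of one extra layer of approximation.

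Two points need repair. First, the claim that $\tilde f\chi_\epsilon$ ``is bounded'' is false: $\chi_\epsilon$ is a spatial cutoff only. What you actually need, and what does hold, is $|\tilde f\chi_\epsilon|^p\le |f|^p$ $\v$-q.s., so that $|\tilde f\chi_\epsilon|^p$ inherits $\v$-uniform integrability; this suffices to place it in $\L^p_b(A_{\delta,R},\v)$ and then, by its $\v$-quasi-continuity, in $\L^p_c(A_{\delta,R},\v)$. Second, you omit the forward inclusion for $\L^p_c$, namely that every $f\in\L^p_c(A,\v)$ admits a $\v$-q.c.\ version. This is the paper's Step~2 and is not covered by ``runs in parallel'' to your $\L^p_b$ argument; it needs the standard Borel--Cantelli/Chebyshev extraction (rapidly convergent subsequence of continuous approximants, open exceptional sets of summable $c^{\v}$-capacity), which works here since Chebyshev for $c^{\v}$ does not require finiteness of the total capacity. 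Two minor remarks: the ``normalization by $c^{\v}(A)$'' is off (the measures $v|_A/c^{\v}(A)$ are not probabilities), but irrelevant since Remark~\ref{rem_cv_capacity_d} already grants what you need; and the appeal to Assumption~\ref{ass1_d} in your last paragraph is unnecessary, as the control both near $0$ and at infinity in the passage $\delta\to 0$, $R\to\infty$ comes from the assumed $\v$-tightness of $|f|^p$, not from any property of $\v$.
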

\begin{proof}
	We will follow the ideas  Proposition 18, Proposition 24 and Theorem 25 in \cite{Denis_function_spaces}. We will proceed in three steps.

\paragraph*{Step 1.} $\L^p_b(A,\v)=\{f\in \L^p(A,\v)\colon  |f|^p \textrm{ is }{\v}\textrm{-tight and }\v\textrm{-uniformly integrable}\}$.
		
		Define $J_p:=\{f\in \L^p(A,\v)\colon  |f|^p \textrm{ is }{\v}\textrm{-tight and }\v\textrm{-uniformly integrable}\}$. Fix $f\in J_p$ and $\epsilon>0$.  By the tightness of $f$ we may define the compact and bounded away from $0$ subset $F_{\epsilon}\subset \bar A$ s.t. 
		$$\sup_{v\in\v}\int_{F_{\epsilon}^c}|f(z)|^pv(dz)<\epsilon/2.$$ 
		Define $f_{n,\epsilon}=[(f\wedge n^{1/p})\vee (-n^{1/p})]\I_{F_{\epsilon}\cap A}$. Then $f_n\in B_b(F_{\epsilon}\cap A)$ and we have by definition of $J_p$ that
		\begin{align*}
			\sup_{v\in\v}\int_{A}|f(z)-f_{n,\epsilon}(z)|^pv(dz)&\leq\sup_{v\in\v}\left\{\int_{F_{\epsilon}^c\cup A^c}|f(z)|^pv(dz)+
			\int_{F_{\epsilon}\cap A}(|f(z)|^p-n)\I_{\{|f(z)|^p>n\}}v(dz)\right\}\\
			&\leq \sup_{v\in\v}\int_{F_{\epsilon}^c}|f(z)|^pv(dz)+
			\sup_{v\in\v}\int_{\r0}|f(z)|^p\I_{\{|f(z)|^p>n\}}v(dz)<\epsilon
		\end{align*}
		for $n$ large enough.
		Hence $J_p\subset \L^p_b(A,\v)$.
		
		On the other hand, for each $f\in \L^p_b(A,\v)$ we may find a sequence $\{g_n\}_{n=1}^{\infty}$ such that $g_n\in B_b(A_n)$ $n=1,2,\ldots$, where $A_n\subset A$ satisfying $0\notin \bar A_n$ and $A_n\uparrow A$, and $\|f-g_n\|^p_{p,A,\v}\to 0$.
		
		First, we claim that each $|g_n|^p$ is $\v$-tight. The proof is straightforward. We fix $\epsilon>0$ Since $\v$ restricted to $\bar A_n$ is tight (see \cite{Peng_levy}, p. 14), we may choose a compact set $K_n$ such that $c^{\v}(K_n^c\cap \bar A_n)\leq \epsilon/(M_n)^p$, where $M_n$ is a bound of $g_n$. We put $F_n:=K_n^c\cap \bar A_n$. Then $F_n$ is compact and bounded away from $0$. We also have
		\begin{align*}
			\sup_{v\in\v}\int_{F^c_n}|g_n(z)|^pv(dz)&=\sup_{v\in\v}\int_{F^c_n\cap \bar A}|g_n(z)|^pv(dz)\leq M_n^p\sup_{v\in\v}\int_{F^c_n\cap \bar A}v(dz)<\epsilon.
		\end{align*}
		To prove that $f$ is also $\v$-tight, fix $\epsilon>0$ again and $N$ such that
			$\|f(z)-g_N(z)\|_{p,A,\v}<\epsilon^{1/p}/3$.
We also fix a compact set $F\subset \overline{ A_N}\subset\bar A$ and bounded away from $0$ set such that 
		\[
			\left(\sup_{v\in\v}\int_{F^c} |g_N(z)|^pv(dz)\right)^{1/p}<\epsilon^{1/p}/3.
		\]
		We have then the estimate
		\begin{align*}
			\left(\sup_{v\in\v}\int_{F^c} |f(z)|^pv(dz)\right)^{1/p}&=\|f(z)-f(z)\I_{F}(z)\|_{p,\r0,\v}\\&\leq\|f(z)-g_N(z)\|_{p,\r0,\v}+\|g_N(z)-f(z)\I_F(z)\|_{p,\r0,\v}\\
		&\leq\|f(z)-g_N(z)\|_{p,A,\v}	
		+\left( \sup_{v\in\v}\int_{F^c} |g_N(z)|^pv(dz)\right)^{1/p}\\&+\left(\sup_{v\in\v}\int_{F} |g_N(z)-f(z)|^pv(dz)\right)^{1/p}<\epsilon^{1/p}
		\end{align*}
		and consequently $|f|^p$ is $\v$-tight.
		
		To prove $\v$-uniform integrability of $|f|^p$, we fix $\epsilon>0$ and a compact set $F_{\epsilon}\subset \r0$ such that $\sup_{v\in\v}\int_{F_{\epsilon}^c} |f(z)|^pv(dz)<\epsilon/2$. Note that
		\begin{equation}\label{eq_characterization_lpv_d}
				\sup_{v\in\v}\int_{\r0} |f(z)|^p\I_{\{|f(z)|^p\geq n\}}v(dz)<\sup_{v\in\v}\int_{F_{\epsilon}} |f(z)|^p\I_{\{|f(z)|^p\geq n\}}v(dz)+\epsilon/2 
		\end{equation}
		 Let $y_n:=\sup_{z\in A} |g_n(z)|$ and $f_{n,\epsilon}:=[(f\wedge y_n)\vee (-y_n)]\I_{F_{\epsilon}}$. Then we have $|f-f_{n,\epsilon}|\leq |f-g_n|$ on $F_{\epsilon}$ and consequently $\|f-f_{n,\epsilon}\|^p_{p,F_{\epsilon},\v}\to 0$. Hence it is not difficult to prove that for any sequence $(\eta_n)_{n=1}^{\infty}$ tending to $\infty$ one has
		 \[
		 	\sup_{v\in\v}\int_{F_{\epsilon}}\left|f(z)-(f(z)\wedge \eta_n)\vee(-\eta_n)\right|^pv(dz)\to 0
		 \]
		 and using exactly the same arguments as in Proposition 18 in \cite{Denis_function_spaces} we get that for $n$ large enough 
		 \[
		 	\sup_{v\in\v}\int_{F_{\epsilon}} |f(z)|^p\I_{\{|f(z)|^p\geq n\}}v(dz)\leq\epsilon/2 
		 \]
		 This together with \eqref{eq_characterization_lpv_d} guarantees that $|f|^p$ is $\v$-uniformly integrable.
		 
\paragraph*{Step 2.} Each element in $\L^p_c(A,\v)$ has a q.c. version.

The argument here is exactly the same as in Proposition 24 in \cite{Denis_function_spaces}: we choose the appropriate sequence of the elements in $C_{b}(A_n)$ converging to a fixed function $f\in\L^p(A,\v)$ and define open sets of small $\v$-capacity, such that the convergence is uniform on their complement. For details, see the original paper.

\paragraph*{Step 3.} $\L^p_c(A,\v)=\{f\in \L^p_b(A,\v)\colon f \textrm{ has a }\v\textrm{-q.c. version}\}$.
First note that we may just prove the characterization for set $A$ open as $\L^p_c(Int\, A,\v)=\L^p_c(\bar A,\v)$.

By step 1 and 2 we have  $\L^p_c(A,\v)\subset\{f\in \L^p_b(A,\v)\colon f \textrm{ has a }\v\textrm{-q.c. version}\}=:J_p$.

To prove the inclusion in the other direction we fix $f\in J_p$. Define $h_n:=(f\wedge n^{1/p})\vee (-n^{1/p})$. Note that $h_n$ is $\v$-q.c. Hence we may find an open set $O_n$ with small ${\v}$-capacity s.t. $h_n$ is continuous outside $O_n$. Moreover we may assume that $O_n\subset A$, as $h^n\equiv 0$ on $A^c$ (which is a closed set). By Tietze's theorem, we extend $h_n$ to a continuous functions $g_n\in C_b(A)$ (in particular, $g_n\equiv 0$ outside $A$). Moreover, we if we choose sets $O_n$ small enough, we can get that $\|f-g_n\|_{p,A,\v}\to 0$. See the proof of Theorem 25 in \cite{Denis_function_spaces} for details. Hence,  without loss of generality we may assume that 
\begin{equation}\label{eq_characterization_Lpcv_d}
		\|f-g_n\|_{p,A,\v}\leq \frac{1}{5n}.
\end{equation}

We will use now $\v$-tightness to construct functions with supports bounded away from 0 which approximate $f$. For every $n=1,2,\ldots$  we choose a compact set $F_n\subset \bar A$ ($0\notin F_n$) such that $\left(\sup_{v\in\v}\int_{F_n^c}|f(z)|^pv(dz)\right)^{1/p}<\frac{1}{5n}$. Hence, by \eqref{eq_characterization_Lpcv_d} we get
\[
	\left(\sup_{v\in\v}\int_{F_n^c}|g_n(z)|^pv(dz)\right)^{1/p}<\frac{2}{5n}.
\]
It is now easy to construct a function $f_n$ satisfying the following conditions:
\begin{enumerate}
	\item $f_n$ is a continuous bounded function with support $A_n\subset A$ bounded away from $0$.
	\item $f_n\equiv g_n$ on $F_n$.
	\item $|f_n|\leq |g_n|$ on $F_n^c$.
\end{enumerate}

Then we have the following
\begin{align*}
	\|f_n-f\|_{p,A,\v}&\leq
	\|f_n-g_n\|_{p,A,\v}+\|g_n-f\|_{p,A,\v}\\
	&=\left(\sup_{v\in\v}\int_{F_n^c}|f_n(z)-g_n(z)|^pv(dz)\right)^{1/p}+\frac{1}{5n}\\
	&=\left(\sup_{v\in\v}\int_{F_n^c}|f_n(z)|^pv(dz)\right)^{1/p}+\left(\sup_{v\in\v}\int_{F_n^c}|g_n(z)|^pv(dz)\right)^{1/p}+\frac{1}{5n}\\
	&=2\left(\sup_{v\in\v}\int_{F_n^c}|g_n(z)|^pv(dz)\right)^{1/p}+\frac{1}{5n}<\frac{1}{n}.\qedhere
\end{align*}
\end{proof}

\subsection{Continuity of the Poisson integral as an operator}

\begin{prop}\label{prop_continity_int_d}
	For each $A\in\B(\r0),\ 0\notin \bar A$ we have that the integral $\int_A\, .\, L(t,dz)$ is a continuous operator from the space $\L^1_b(A,\v)$ to $\L^1_b(\Omega_t)$.
\end{prop}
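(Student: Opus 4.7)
The strategy is to exploit the upper-expectation representation $\GE[\,\cdot\,]=\sup_{\theta\in\a^{\u}_{0,\infty}}\E^{\P^{\theta}}[\,\cdot\,]$ from Theorem \ref{tw_rep_sub_lin_d}: first I would establish the continuity bound on the dense subspace $B_b(A)\subset\L^1_b(A,\v)$, then verify that the integral lies in $\L^1_b(\Omega_t)$ (and not merely in $\L^1(\Omega_t)$) via a uniform $L^2$ estimate, and finally extend by density and completeness of $\L^1_b(\Omega_t)$.

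Fix $\theta\in\a^{\u}_{0,\infty}$ and $\phi\in B_b(A)$. Under $\P^{\theta}=\P_0\circ (B^{0,\theta}_\cdot)^{-1}$ the canonical process has the law of the It\^o--L\'evy integral $B^{0,\theta}$, whose jumps are exactly the points $\theta^d(s,z)$ at the atoms $(s,z)$ of $N$ with $\theta^d(s,z)\neq 0$. Since $\theta^d(s,\cdot)(\omega)=g_{v_{s,\omega}}(\cdot)$ for some $v_{s,\omega}\in\v$ with $g_{v}(0)=0$, one obtains pathwise
\[
\int_A\phi(z)\,L(t,dz)\circ B^{0,\theta}=\int_0^t\!\!\int_{\r0}\I_A(\theta^d(s,z))\,\phi(\theta^d(s,z))\,N(ds,dz).
\]
Applying Campbell's formula to $|\phi|$ together with the change of variables $v(\cdot)=\mu(g_v^{-1}(\cdot))$ from Remark \ref{rem_g_v_d} then gives
\[
\E^{\P^{\theta}}\!\left[\int_A|\phi(z)|L(t,dz)\right]=\E^{\P_0}\!\int_0^t\!\!\int_A|\phi(y)|\,v_{s,\omega}(dy)\,ds\leq t\,\|\phi\|_{1,A,\v},
\]
and taking the supremum over $\theta$ produces the operator estimate $\GE[|\int_A\phi\,L(t,dz)|]\leq t\,\|\phi\|_{1,A,\v}$.

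To see that the image lies in $\L^1_b(\Omega_t)$ I would invoke Proposition \ref{prop_characterization_of_Lpc+Lpg_d}: it is enough to exhibit uniform integrability, and for this the stronger bound $\GE[(\int_A\phi\,L(t,dz))^2]<\infty$ suffices by Chebyshev's inequality. Under each $\P^{\theta}$ decompose the Poisson integral as the compensated martingale $\int_0^t\!\int\I_A\phi(\theta^d)\,d\tilde N$ (with $\tilde N:=N-\mu\otimes ds$) plus the predictable drift $\int_0^t\!\int\I_A\phi(\theta^d)\,\mu(dz)\,ds$. Since $A$ is bounded away from $0$, the finiteness $\sup_v\int|z|v(dz)<\infty$ from \eqref{eq_property_of_u_d} forces $c^{\v}(A)<\infty$, so for $|\phi|\leq M$ the drift is dominated by $tM c^{\v}(A)$ and the It\^o isometry controls the $L^2$-norm of the martingale part by $\sqrt{tM^2 c^{\v}(A)}$; the supremum over $\theta$ remains finite. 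The bounded linear map $B_b(A)\to\L^1_b(\Omega_t)$ thus extends uniquely to $\L^1_b(A,\v)\to\L^1_b(\Omega_t)$ by density and completeness, and the extension agrees with the pathwise sum because $\bar A$ being bounded away from $0$ makes that sum a.s.\ finite for every Borel integrand. The only non-routine step is the identification of $L$ under each $\P^{\theta}$ via $N$ and the random $g_v$'s; once that is set up, the continuity estimate and the $\L^1_b$ membership are standard consequences.
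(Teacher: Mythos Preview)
Your argument is correct and the continuity estimate (the pathwise identification of $L(t,A)$ under each $\P^{\theta}$ via $N$ and Campbell's formula yielding $\GE[|\int_A\phi\,L(t,dz)|]\le t\|\phi\|_{1,A,\v}$) is exactly what the paper does in equations \eqref{eq_continuity1_d}--\eqref{eq_continuity2_d}. The genuine difference is how you establish membership in $\L^1_b(\Omega_t)$ for $\phi\in B_b(A)$.

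The paper proves the uniform integrability condition of Proposition \ref{prop_characterization_of_Lpc+Lpg_d} directly: assuming $|\phi|\le 1$, it observes that the event $\{|\int_A\phi\,L(t,dz)|>n\}$ under $\P^{\theta}$ is contained in the event that the classical Poisson process $u\mapsto N(u,B)$ has at least $n$ jumps on $]0,t]$, where $B\supset\bigcup_{v\in\v}g_v^{-1}(A)$ is bounded away from $0$ (which exists by Remark \ref{rem_g_v_d}). One then controls the tail by $\sum_{m\ge n}m\,\P_0(N(t,B)=m)$, which is finite and $\theta$-independent, so the supremum over $\theta$ vanishes as $n\to\infty$.

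Your route is shorter: you bound the second moment uniformly in $\theta$ by decomposing into compensated martingale plus drift and using the It\^o isometry together with $c^{\v}(A)<\infty$, then conclude via $|Y|\I_{\{|Y|>n\}}\le Y^2/n$. This is a perfectly valid and arguably more standard argument; the only extra ingredient is the isometry, whose hypotheses are met since $(s,z)\mapsto\I_A(\theta^d(s,z))\phi(\theta^d(s,z))$ is predictable and square-integrable against $\mu(dz)ds$. What the paper's jump-counting argument buys is that it never leaves the $L^1$ world and gives an explicit tail bound independent of the second moment, but for the purpose of this proposition your $L^2$ estimate is both sufficient and cleaner.
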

\begin{proof}
	The proof is similar to the proofs of Theorem 27 and 28 in \cite{moj}. For the sake of completeness, we will sketch it. First we take a $\phi\in B_b(A)$. 
	
	We prove that the integral of $\phi$ is $\L^1_b(\Omega_t)$. We use the extended notion of a sublinear expectation as an upper-expectation and the characterization of $\L^1_b(\Omega_t)$ from Proposition \ref{prop_characterization_of_Lpc+Lpg_d}. 

First, we prove the continuity of the integral w.r.t. the appropriate norms.
	\begin{align}\label{eq_continuity1_d}
		\GE\left[\left|\int_A\phi(z)L(t,dz)\right|\right]&=\sup_{\theta\in\a^{\u}_{0,t}}\E^{\P_0}\left[\left|\sum_{0<u\leq t}\phi(\Delta B^{0,\theta}_u)\I_A(\Delta B^{0,\theta}_u)\right|\right]\notag\\
		&=\sup_{\theta\in\a^{\u}_{0,t}}\E^{\P_0}\left[\left|\sum_{0<u\leq t}\phi(\theta^d(u,\Delta N_u))\I_A(\theta^d(u,\Delta N_u))\right|\right]\notag\\
		&=\sup_{\theta\in\a^{\u}_{0,t}}\E^{\P_0}\left[\left|\int_0^t\int_{\r0}\phi^{\theta}(u,z)N(du,dz)\right|\right],
	\end{align}
	where $\phi^{\theta}$ is a predictable random field defined as
	\[
		\phi^{\theta}(u,z):=\phi(\theta^d(u,z))\I_A(\theta^d(u,z)).
	\]

	Define now a random measure $\pi^{\theta}_u(.)(\omega):=\mu\circ(\theta^d(u,.)(\omega))^{-1}$. By the definition of a set $\a^{\u}_{0,\infty}$ we know that for a.a. $\omega$ and $u$ we have that $\pi^{\theta}(t,\omega,.)\in\v$. Hence it is now easy to see that 
	\begin{align}\label{eq_continuity2_d}
		\GE\left[\left|\int_A\phi(z)L(t,dz)\right|\right]
		&\leq\sup_{\theta\in\a^{\u}_{0,t}}\E^{\P_0}\left[\int_0^t\int_{\r0}|\phi^{\theta}(u,z)|N(du,dz)\right]\notag\\
		&=\sup_{\theta\in\a^{\u}_{0,t}}\E^{\P_0}\left[\int_0^t\int_{\r0}|\phi^{\theta}(u,z)|\mu(dz)du\right]\notag\\
		&=\sup_{\theta\in\a^{\u}_{0,t}}\E^{\P_0}\left[\int_0^t\int_{A}|\phi(z)|\pi^{\theta}_u(dz)du\right]= t\, \sup_{v\in\v} \int_A |\phi(z)|v(dz).
	\end{align}
	The last equality is true as we may always take process $\theta^d$ which is deterministic. 
	
	Hence, if $\phi\in B_b(A)$, then $\int_{A}\phi(z)L(t,dz)\in \L^1(\Omega_t)$.
	
	Now we prove that the integral is in fact in $\L_b^1(\Omega_t)$.
	Let $K$ be a bound of $\phi$. Without the loss of generality we may assume that $K=1$. Take a set $B\in \B(\r0)$ such that $0\notin \bar B$ and $B\supset \bigcup_{v\in\v}g^{-1}_v(A)$ (it is possible by Remark \ref{rem_g_v_d}). Then for any $\theta \in \a^{\u}_{0,\infty}$ we have the following inclusion
\begin{align*}
    &\left\{\left|\sum_{s<u\leq t}\phi(\theta^d(u,\Delta N_u))\I_{A}(\theta^d(u,\Delta N_u))\right|>n\right\}\subset \left\{\sum_{s<u\leq t}\left|\phi(\theta^d(u,\Delta N_u))\I_{B}(\Delta N_u)\right|>n\right\}
    \\&\quad\subset\{u\mapsto N(u,B) \textrm{ has at least } n\textrm{ jumps in }]s,t]\}=:B_n,
\end{align*}
as the sum of jumps grows only at jump times and only by a value bounded by $1$. Introduce
\[
    C_n:=B_{n}\setminus B_{n+1}=\{u\mapsto N(u,B)  \textrm{ has } n\textrm{ jumps in the interval }]s,t]\}.
\]
Hence we have the estimate
\begin{align*}
    \GE&\left[\left|\int_A\phi(z)L(t,dz)\right|\I_{\{|\int_A\phi(z)L(t,dz)|>n\}}\right]\\
     &=\sup_{\theta\in\a^{\u}_{0,t}}\E^{\P_0}\left[\left|\sum_{u\leq t}\phi(\theta^d(u,\Delta N_u))\I_{A}(\theta^d(u,\Delta N_u))\right|\I_{\left\{\left|\sum_{u\leq t}\psi(\theta^d(u,\Delta N_u))\I_{A}(\theta^d(u,\Delta N_u))\right|>n\right\}}\right]\\
&\leq \sup_{\theta\in\a^{\u}_{0,t}}\E^{\P_0}\left[\sum_{u\leq t}\left|\phi(\theta^d(u,\Delta N_u))\I_{B}(\Delta N_u)\right|\I_{B_n}\right]\\
&\leq \sum_{m=n}^{\infty}\sup_{\theta\in\a^{\u}_{0,t}}\E^{\P_0}\left[\sum_{u\leq t}\left|\phi(\theta^d(u,\Delta N_u))\I_{B}(\Delta N_u)\right|\I_{C_n}\right]
\\
&\leq \sum_{m=n}^{\infty}\sup_{\theta\in\a^{\u}_{0,t}}\E^{\P_0}\left[m\I_{C_m}\right]
=\sum_{m=n}^{\infty}\, m\P_0(C_m), \to 0\textrm{ as }n\to\infty,
\end{align*}
because the Poisson random variable has first moment finite and the number of jumps of the Poisson process $u\mapsto N(u,B)$ in the fixed interval is Poisson-distributed. Consequently, the integral belongs to $\L^1_b(\Omega_t)$ by the characterization from Proposition \ref{prop_characterization_of_Lpc+Lpg_d}. For details see the proof of Theorem 28 in \cite{moj}.
	 
	 Now it is easy to extend this result to the whole space $\L^1_b(A,\v)$ by using \eqref{eq_continuity2_d}. 	
\end{proof}
\begin{rem}\label{rem_extenstion_of_integral_d}
	\begin{enumerate}
	\item Using Proposition \ref{prop_continity_int_d} one can prove that for each $A\in\B(\r0)$ and $\phi\in\L^1_b(A,\v)$ the following integral is well defined
	\[
		\int_{A}\phi(z)L(t,dz).
	\]
	In particular, we note that under Assumption \ref{ass_uniform_integ_d} (and of course Assumption \ref{ass1_d}), for $A=\r0$ since $(z\mapsto z) \in \L^1_b(\r0,\v)$ (compare with Proposition 28 in \cite{Denis_function_spaces}). Hence, we have that we can define the integral $\int_{\r0}z L(t,dz)$ and it is an element of $\L^1_b(\Omega)$.
	\item The extended integral is a continuous operator from $\L^1_b(\r0,\v)$ to $\L^1_b(\Omega)$.
	\item By using the same argument as in eq. \eqref{eq_continuity1_d} and \eqref{eq_continuity2_d} we may conclude that $\GE[\int_A \phi(z)L(t,dz)]=t\sup_{v\in\v}\int_A\phi(z)v(dz)$.
	\end{enumerate}
\end{rem} 

\subsection{Distributional properties of  Poisson integrals}
The second important property, which we will investigate in this section, is the distribution of a Poisson integral. Thanks to the proof of the representation of $\GE[.]$ we easily get the following Proposition.

\begin{prop}
	For each $A\in \B(\r0)$ and $\phi\in \L^1_b(A,\v)$ the stochastic process $t\mapsto\int_A \phi(z)L(t,dz)$ is a $G^{\phi,A}$-\levy process, where $G^{\phi,A}$ is a non-local operator associated with set $\u^{\phi,A}= \v^{\phi,A}\times \{0\}\times \{0\}$ via \levykhintchine formula in Theorem \ref{tw_levykhintchine_d} with
	\[
	 \v^{\phi,A}:=\{v\circ \phi^{-1}(.\cap A)\colon v\in\v\}.
	\]
\end{prop}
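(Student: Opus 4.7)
The plan is to verify each clause of Definition \ref{def_levy_d} for $Y_t := \int_A \phi(z) L(t, dz)$, leaning on the representation $\GE[\cdot] = \sup_{\theta \in \a^\u_{0,\infty}} \E^{\P^\theta}[\cdot]$ from Theorem \ref{tw_rep_sub_lin_d}. Several clauses are immediate: $Y_0 = 0$ and the c\`adl\`ag property follow from the definition and from $X$ being c\`adl\`ag; the continuous part is identically zero so $\lim_{t \downarrow 0} \GE[|Y^c_t|^3]t^{-1} = 0$ trivially; and the finite-variation bound $\GE[|Y_t|] \leq Ct$ is exactly Remark \ref{rem_extenstion_of_integral_d}(3).

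For stationarity and independence of increments I would fix $\theta \in \a^\u_{0,\infty}$ and use that, under $\P^\theta$, the jumps of the canonical process are realized as $\Delta X_u = \theta^d(u, \Delta N_u)$, with $N$ the Poisson measure on $(\tilde\Omega, \G, \P_0)$. Thus
\[
Y_{t+s} - Y_t = \int_t^{t+s}\int_{\r0}\phi(\theta^d(u,z))\I_A(\theta^d(u,z))\,N(du,dz)\quad \P^\theta\text{-a.s.}
\]
Exploiting that $\a^\u_{0,\infty}$ is stable under the time-shift $\theta(\cdot + t, \cdot)$ and under piecewise concatenation of admissible controls at a fixed time, together with the stationary-independent-increment structure of $N$ under $\P_0$, the desired identities $\GE[\psi(Y_{t+s}-Y_t)] = \GE[\psi(Y_s)]$ and the independence clause of Definition \ref{def_levy_d} would follow after a standard pasting argument on the supremum over $\theta$.

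For the \levykhintchine representation with set $\u^{\phi,A} = \v^{\phi,A} \times \{0\} \times \{0\}$, I would compute the generator $G^{\phi,A}[\psi(\cdot)] := \lim_{\delta \downarrow 0} \GE[\psi(Y_\delta)]\delta^{-1}$ for $\psi \in C^3_b(\R^d)$ with $\psi(0) = 0$. Under each $\P^\theta$, the process $Y$ is a classical \itolevy integral and It\^o's formula gives that the leading $\delta$-order term of $\E^{\P^\theta}[\psi(Y_\delta)]$ is
\[
\delta \int_{\r0} \psi(\phi(\theta^d(0,z))\I_A(\theta^d(0,z)))\mu(dz) = \delta\int_{\r0}\psi(y)\tilde\pi^\theta(dy),
\]
where $\tilde\pi^\theta := (\mu \circ \theta^d(0,\cdot)^{-1})\circ \phi^{-1}(\cdot \cap A) \in \v^{\phi,A}$. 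Since every $\tilde v \in \v^{\phi,A}$ arises from some deterministic admissible $\theta$ (via the $g_v$ of Remark \ref{rem_g_v_d}), taking the supremum over $\theta$ recovers $G^{\phi,A}[\psi] = \sup_{\tilde v \in \v^{\phi,A}} \int_{\r0} \psi(y)\tilde v(dy)$, which is exactly the \levykhintchine form with the claimed set.

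The main obstacle I anticipate is the rigorous treatment of the independence of increments in the sublinear framework: the control $\theta$ couples the time intervals $[0, t]$ and $]t, t+s]$, and one must argue that the supremum splits along this decomposition. This is where the concatenation stability of $\a^\u_{0,\infty}$, implicit in the representation Theorem \ref{tw_rep_sub_lin_d}, is essential. Once this is in place, the remaining steps reduce to routine classical computations for Poisson integrals together with the already-established continuity of the integral as an operator.
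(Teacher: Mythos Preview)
Your approach is essentially the same as the paper's: both write the increments of $Y$ as \itolevy integrals $\tilde B^{a,\theta}_b=\int_a^b\int_{\r0}(\phi\cdot\I_A)\circ\theta^d(s,z)\,N(ds,dz)$ under $\P_0$ and then invoke the concatenation/dynamic-programming property of the control set $\a^{\u}$ (what the paper cites as Theorem~13 in \cite{moj}) to split the supremum and obtain independence and stationarity of increments under $\GE$.

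The one difference worth flagging is how the specific form of $G^{\phi,A}$ is identified. The paper does not compute the generator limit; instead it observes once and for all that the family $\{\tilde B^{a,\theta}_b\}$ is \emph{exactly} the family of \itolevy integrals appearing in the representation of the sublinear expectation $\tilde\E$ associated with the set $\u^{\phi,A}=\v^{\phi,A}\times\{0\}\times\{0\}$. This identification transfers the full \levy structure (including the \levykhintchine form) from the canonical $G^{\phi,A}$-\levy process to $Y$ in one stroke. Your direct generator computation reaches the same endpoint, but the expression $\theta^d(0,\cdot)$ is not well-defined for a generic time-varying predictable control; what actually survives the limit $\delta\downarrow 0$ after the supremum is $\sup_{v\in\v}\int_{\r0}\psi(\phi(z)\I_A(z))\,v(dz)$, and making this rigorous again leans on the same dynamic-programming machinery. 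The paper's identification argument sidesteps this bookkeeping.
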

Before we go to the proof, note that by this proposition it is trivial that the Poisson random measure $t\mapsto L(t,A)$ is a $G$-Poisson process for any $A\in \B(\r0)$, $0\notin \bar A$.
\begin{proof}
We need to check the definition of a $G$-\levy process for a process $t\mapsto Y_t:=\int_A \phi(z)L(t,dz)$. It is obvious that $Y_0=0$. Fix $t,s\geq0$, take a partition $0\leq t_1\leq \ldots \leq t_n\leq t$ and a function $\psi \in C_{b,Lip}(\R^{d\times(n+1)})$. Then we have via representation of $\GE[.]$ that
\begin{align*}
	\GE\left[\psi(Y_{t_1},\ldots,Y_{t_n}-Y_{t_{n-1}},Y_{t+s}-Y_{t})\right]&=\sup_{\theta\in \a^{\u}_{0,t+s}}\E^{\P_0}\left[\psi(\tilde B^{0,\theta}_{t_1},\ldots,\tilde B^{t_{n-1},\theta}_{t_n},\tilde B^{t,\theta}_{t+s})\right],
\end{align*}
where 
\[
	\tilde B^{a,\theta}_{b}:= \int_a^b\int_{\r0}(\phi\cdot \I_A)\circ \theta^d(s,z)N(ds,dz).
\]
The Poisson random measure $N$ and the probability measure $\P_0$ are taken from the representation of $\GE$. It is now easy to see that $\tilde B^{a,\theta}_{b}$ is an \itolevy integral which is used to prove the representation of some sublinear expectation $\tilde\E$ associated with a $G^{\phi,A}$-\levy process. By the proof of representation of $\tilde\E$ (see Section 3 in \cite{moj}) it is easy to see that the following property holds (compare with Theorem 13 in \cite{moj}):
\begin{align*}
	\sup_{\theta\in \a^{\u}_{0,t+s}}\E^{\P_0}\left[\psi(\tilde B^{0,\theta}_{t_1},\ldots,\tilde B^{t_{n-1},\theta}_{t_n},\tilde B^{t,\theta}_{t+s})\right]&=\sup_{\theta\in \a^{\u}_{0,t}}\E^{\P_0}\left[\sup_{\theta\in \a^{\u}_{t,t+s}}\E^{\P_0}\left[\psi(x,\tilde B^{t,\theta}_{t+s})\right]|_{x=B}\right],
\end{align*}
where $	B=\left(\tilde B^{0,\theta}_{t_1},\ldots,\tilde B^{t_{n-1},\theta}_{t_n}\right)$.
Hence, again by the representation of $\GE[.]$ we get that
\begin{align*}
	\GE\left[\psi(Y_{t_1},\ldots,Y_{t_n}-Y_{t_{n-1}},Y_{t+s}-Y_{t})\right]&=\GE\left[\GE \left[\psi(x,Y_{t+s}-Y_{t})\right]|_{x=(Y_{t_1},\ldots,Y_{t_n}-Y_{t_{n-1}})}\right],
\end{align*}
therefore the increment of $Y$ is independent of the past. We prove that the increment has stationary distribution in the same way. It is now obvious that $Y$ is a $G^{\phi,A}$-\levy process.
\end{proof}

Just as in the classical case, one can consider a very simple function $\phi(z)=z\I_{A}(z)$ for some Borel set $A$ bounded away from $0$. Using a technique similar to the proof above, one can prove the following result.  
\begin{prop}
	Under Assumption \ref{ass_uniform_integ_d} for any $A\in \B(\r0)$ stochastic process $t\mapsto(X_t-\int_AzL(t,dz),\int_AzL(t,dz))$ is a $\tilde G^{A}$-\levy process, where $\tilde G^{A}$ is a non-local operator associated with set $\tilde\u^{A}$ via \levykhintchine formula in Theorem \ref{tw_levykhintchine_d} with
	\[
	 \tilde\u^{A}:=\{(v|_{A^c}\otimes v|_A, (p,0),(q,0))\colon (v,p,q)\in\u\}.
	\]
\end{prop}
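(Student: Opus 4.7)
The plan is to mimic the proof of the preceding proposition, this time for a $2d$-dimensional process obtained by splitting the jump structure of $X$ along $A$ and $A^c$. The key observation is that under Assumption \ref{ass_uniform_integ_d}, the map $z\mapsto z$ belongs to $\L^1_b(\r0,\v)$, so by Remark \ref{rem_extenstion_of_integral_d} the integral $\int_A z\,L(t,dz)$ is a well-defined element of $\L^1_b(\Omega)$ and, by the definition of the pathwise Poisson integral, $\int_A z\, L(t,dz) = \sum_{0<u\le t}\Delta X_u\,\I_A(\Delta X_u)$ q.s., so that the difference $X_t-\int_A z\,L(t,dz)$ is obtained from $X$ by removing its $A$-jumps and is thus a $\cadlag$ process.

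Using the representation from Theorem \ref{tw_rep_sub_lin_d}, for any $\theta\in \a^{\u}_{0,\infty}$ the $\P_0$-law of $(X_t-\int_A z\,L(t,dz),\int_A z\,L(t,dz))$ under $\P^{\theta}$ coincides with the law of the $\R^{2d}$-valued \itolevy integral
\begin{align*}
    \Bigl(\int_0^t\theta^{1,c}_s\,ds+\int_0^t\theta^{2,c}_s\,dW_s &+\int_{]0,t]}\!\!\int_{\r0}\theta^d(s,z)\I_{A^c}(\theta^d(s,z))N(ds,dz),\\
    &\int_{]0,t]}\!\!\int_{\r0}\theta^d(s,z)\I_A(\theta^d(s,z))N(ds,dz)\Bigr).
\end{align*}
Writing $\pi^{\theta}_s(\omega,\cdot)=\mu\circ(\theta^d(s,\cdot)(\omega))^{-1}\in\v$, the pushforward of this integrand under $\mu(dz)$ gives a random Lévy measure on $\R^{2d}_0$ equal to the image of $\pi^{\theta}_s|_{A^c}$ under $z\mapsto(z,0)$ plus the image of $\pi^{\theta}_s|_A$ under $z\mapsto(0,z)$, which is precisely the measure denoted $\pi^{\theta}_s|_{A^c}\otimes\pi^{\theta}_s|_A$ in the definition of $\tilde\u^A$. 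The drift and diffusion coefficients, lifted to $\R^{2d}$ as $(\theta^{1,c}_s,0)$ and $(\theta^{2,c}_s,0)$, also match the form prescribed by $\tilde\u^A$. Consequently the family of admissible integrands for the pair process is exactly $\a^{\tilde\u^A}_{0,\infty}$.

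To verify the $G$-\levy property for the pair, I will repeat the argument used for the preceding proposition verbatim: fix $0\le t_1<\dots<t_n\le t$, $s\ge 0$, and $\psi\in C_{b,Lip}(\R^{2d\times(n+1)})$. Applying $\GE[\psi(\cdot)]=\sup_{\theta}\E^{\P_0}[\psi(\cdot)]$ to the vector of increments of the pair process and invoking the dynamic-consistency property of the $\sup$ over $\a^{\u}$-integrands (Theorem 13 in \cite{moj}), one factors the supremum over $[0,t_{n}]$ and $[t,t+s]$, which gives the independence of the increment over $]t,t+s]$ from the past and, by invariance of the \itolevy integrals under time-shift, its stationarity. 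The starting value at $0$ is $(0,0)$ trivially.

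Finally, the decomposition condition and the growth conditions in Definition \ref{def_levy_d} carry over from $X$: the continuous part of the pair is $(X^c,0)$, satisfying $\lim_{t\downarrow 0}\GE[|(X^c_t,0)|^3]t^{-1}=\lim_{t\downarrow 0}\GE[|X^c_t|^3]t^{-1}=0$, and the pure-jump part $(X^d-\int_A z\,L(\cdot,dz),\int_A z\,L(\cdot,dz))$ has $\GE$ bounded by $\GE[|X^d_t|]+2\GE[|\int_A z\,L(t,dz)|]\le Ct+2t\sup_{v\in\v}\int_A|z|v(dz)\le C't$ by item 3 of Remark \ref{rem_extenstion_of_integral_d}. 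The Lévy-Khintchine operator is then identified by computing $\lim_{\delta\downarrow 0}\delta^{-1}\GE[f(\text{pair}_\delta)]$ for $f\in C^3_b(\R^{2d})$ with $f(0)=0$, which by the representation reduces to a supremum over $\u$ of $\int_{\r0}[f(z\I_{A^c}(z),0)+f(0,z\I_A(z))-f(0,0)]v(dz)+\langle Df(0),(p,0)\rangle+\tfrac12\tr[D^2f(0)(q,0)(q,0)^T]$, matching exactly the operator $\tilde G^A$ associated to $\tilde\u^A$. The main care needed is the bookkeeping with the notation $v|_{A^c}\otimes v|_A$ — it should be read as the sum of the two one-sided marginals on $\R^d\times\{0\}$ and $\{0\}\times\R^d$, not as a product measure — and the verification that, under Assumption \ref{ass_uniform_integ_d}, all integrals appearing in the Lévy-Khintchine representation are finite.
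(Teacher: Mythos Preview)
Your proposal is correct and follows exactly the route the paper indicates: the paper itself gives no proof here beyond the remark ``using a technique similar to the proof above,'' and you have carried out precisely that technique---pushing the pair process through the representation of $\GE$ from Theorem \ref{tw_rep_sub_lin_d}, identifying the resulting \itolevy integrals as admissible integrands for $\tilde\u^A$, and then invoking the dynamic-consistency argument (Theorem 13 in \cite{moj}) to obtain the independence and stationarity of increments. Your additional verification of conditions 4--5 of Definition \ref{def_levy_d} and the explicit computation of the \levykhintchine operator are more than the paper spells out, and your caution about reading $v|_{A^c}\otimes v|_A$ as a sum of one-sided embeddings rather than a product measure is exactly right.
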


The direct consequence of this proposition is the following corollary. 
\begin{cor}\label{cor_decomposition_x_d}
	Let $X$ be a $G$-\levy process defined on the canonical sublinear expectation space $(\Omega,\L^1_b(\Omega),\GE[.]),\ \Omega=\d0$ satisfying Assumption \ref{ass_uniform_integ_d}. Then the decomposition $X_t=X^c_t+X^d_t$ as in Point 4 of Definition \ref{def_levy_d} might be taken on the same sublinear expectation space with $X_t^c:=X_t-\int_{\r0}zL(t,dz)$ and $X^d_t:=\int_{\r0}zL(t,dz)$, i.e. $X^c_t$ and $X^d_t$ belong to $\L^1_b(\Omega_t)$. 
\end{cor}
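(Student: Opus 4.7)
First, my plan is to specialize the proposition immediately preceding this corollary to $A=\r0$: then $A^c=\emptyset$ carries no L\'evy mass, so the triplet set $\tilde\u^{\r0}$ consists of elements $(0\otimes v,(p,0),(q,0))$ with $(v,p,q)\in\u$. Hence the pair $(X^c_t,X^d_t)=(X_t-\int_{\r0}zL(t,dz),\int_{\r0}zL(t,dz))$ is a $2d$-dimensional $\tilde G^{\r0}$-L\'evy process on the same canonical sublinear expectation space $(\Omega,\L^1_b(\Omega),\GE)$, with $X^c$ a generalized $G$-Brownian motion (triplet $(0,p,q)$, continuous paths) and $X^d$ a pure-jump process of finite variation (triplet $(v,0,0)$). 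The equality $X_t=X^c_t+X^d_t$ moreover holds \emph{pathwise} by construction, which is strictly stronger than the distributional equality required by Point~4 of Definition~\ref{def_levy_d}.

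Next, I would verify the integrability claim. For $X^d_t$, I would invoke Remark~\ref{rem_extenstion_of_integral_d}(1): under Assumption~\ref{ass_uniform_integ_d} the map $z\mapsto z$ lies in $\L^1_b(\r0,\v)$, so its extended Poisson integral $X^d_t$ belongs to $\L^1_b(\Omega)$ and, being $\F_t$-measurable, to $\L^1_b(\Omega_t)$. For $X^c_t=X_t-X^d_t$ I would argue via closure of $\L^1_b(\Omega_t)$ under addition: the characterization in Proposition~\ref{prop_characterization_of_Lpc+Lpg_d}, together with $\{|Y+Z|>n\}\subset\{|Y|>n/2\}\cup\{|Z|>n/2\}$, sub-additivity of $\GE$, and a routine truncation argument for cross terms $\GE[|Y|\I_{\{|Z|>n/2\}}]$, shows $\L^1_b(\Omega_t)$ is a vector space. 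It then suffices to show $X_t\in\L^1_b(\Omega_t)$, for which I would use the representation of Theorem~\ref{tw_rep_sub_lin_d},
\[
	\GE[|X_t|^p]=\sup_{\theta\in\a^{\u}_{0,t}}\E^{\P_0}[|B^{0,\theta}_t|^p],
\]
and bound the right-hand side uniformly in $\theta$ for some $p>1$ via BDG and Kunita inequalities: the coefficients $\theta^{1,c},\theta^{2,c}$ are bounded by~\eqref{eq_property_of_u_d}, and Assumptions~\ref{ass1_d}--\ref{ass_uniform_integ_d} together yield $\sup_{v\in\v}\int_{\r0}|z|^pv(dz)<\infty$ for the corresponding $p>1$.

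The main obstacle is this final $L^p$-bound. Assumption~\ref{ass_uniform_integ_d} is exactly the hypothesis that upgrades the integrability of the L\'evy measures past the first moment, and it is this upgrade that makes $|X_t|$ uniformly integrable under $\GE$. Without it the supremum over $\theta$ of the truncated tails need not vanish and the pathwise decomposition would escape $\L^1_b(\Omega_t)$ even though each $\P^\theta$ delivers a finite first moment. Once the $L^p$-bound is established, Proposition~\ref{prop_characterization_of_Lpc+Lpg_d} gives $X_t\in\L^1_b(\Omega_t)$ and linearity then gives $X^c_t\in\L^1_b(\Omega_t)$.
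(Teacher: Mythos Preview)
Your proposal is correct and follows the same route the paper intends: the paper states the corollary as a ``direct consequence'' of the preceding proposition, and you carry this out by specializing that proposition to $A=\r0$ and invoking Remark~\ref{rem_extenstion_of_integral_d}(1) for $X^d_t\in\L^1_b(\Omega_t)$. The only difference is that you fill in the membership $X^c_t\in\L^1_b(\Omega_t)$ via an explicit $L^p$-bound on $X_t$ together with linearity, whereas the paper leaves this implicit; your argument is sound, though one could alternatively read off directly from the triplet set $\tilde\u^{\r0}$ that $X^c$ is a generalized $G$-Brownian motion and control its moments that way. One minor remark: your justification that $\L^1_b(\Omega_t)$ is closed under addition via the inclusion $\{|Y+Z|>n\}\subset\{|Y|>n/2\}\cup\{|Z|>n/2\}$ is unnecessary, since $\L^1_b(\Omega_t)$ is by definition the completion of a normed vector space and hence already a Banach space.
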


\section{Quasi-continuity of Poisson integrals}\label{sec_quasi-cont_d}
In this section we  concentrate on the quasi-continuity of the Poisson integrals. In particular, we  introduce the tools which enables us to investigate when processes $X^c$ and $X^d$ used in the decomposition established in the Corollary \ref{cor_decomposition_x_d} belong to the space $L^1_G(\Omega)$. Such a particular form of the decomposition is useful, as many objects (as for example the conditional expectation) are defined only on the space $L^1_G(\Omega)$. 

First, let us deal with the simple situation of a continuous function $\phi$.
\begin{prop}\label{prop_phi_continuous_integral_too_d}
	Let $\phi\in C_{b}(A)$, $A\in\B(\r0)$ and $0\notin \bar A$. Then for all $t>0$ we have that the function $\omega^t\mapsto\int_{\r0}\phi(z)L(t,dz)(\omega^t)$ is continuous. Moreover, we also have that $\omega\mapsto\int_{\r0}\phi(z)L(t,dz)(\omega)$ has a q.c. version. Hence, $\int_{\r0}\phi(z)L(t,dz)\in L^1_G(\Omega_t)$. 
\end{prop}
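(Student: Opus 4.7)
The plan is to prove pathwise Skorohod continuity of the integral viewed as a function on $\Omega_t$, then pass to the full space $\Omega$ via the stopping map together with a polarity argument to obtain a q.c.\ version, and finally conclude membership in $L^1_G(\Omega_t)$ by combining Proposition~\ref{prop_continity_int_d} with Theorem~\ref{L_G_equal_L_c_d}. The starting observation is that $0\notin\bar A$ lets us fix $\eta>0$ with $A\subset\{|z|\geq\eta\}$; every \cadlag path then has only finitely many jumps of magnitude $\geq\eta$ on $[0,t]$, so
\[
 \int_{\r0}\phi(z)L(t,dz)(\omega)=\sum_{0<u\le t}\phi(\Delta\omega_u)\I_A(\Delta\omega_u)
\]
is a genuine finite sum, well defined pointwise. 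Recall also that $\phi\in C_b(A)$ means $\phi$ is bounded and continuous on $\R^d$ and vanishes outside $A$, and is therefore continuous across $\partial A$.

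The core step is continuity on $\Omega_t$. Take $\omega^n\to\omega$ in the $J_1$-Skorohod topology on $\Omega_t$ and let $t_1<\ldots<t_N\in[0,t]$ enumerate the jump times of $\omega$ with $|\Delta\omega_{t_j}|\geq\eta/2$. Standard Skorohod-topology properties produce time-changes $\lambda_n$ of $[0,t]$ (fixing both endpoints, $\lambda_n\to\mathrm{id}$ uniformly) such that $t_j^n:=\lambda_n^{-1}(t_j)\to t_j$ and $\Delta\omega^n_{t_j^n}\to\Delta\omega_{t_j}$ for every $j$, and such that for $n$ large $\omega^n$ has no further jump on $[0,t]$ of magnitude $\geq\eta/2$. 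The endpoint-fixing nature of $\lambda_n$ handles any jump of $\omega$ at the endpoint $t$: it is approximated by jumps of $\omega^n$ at $t_j^n\le t$, all properly counted in the integral. Continuity of $\phi$ on $\R^d$ yields $\phi(\Delta\omega^n_{t_j^n})\to\phi(\Delta\omega_{t_j})$ for each $j$, even when $\Delta\omega_{t_j}$ lies on $\partial A$, and summing gives the convergence of the integrals.

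For the unstopped functional on $\Omega$, observe that it equals the continuous functional on $\Omega_t$ composed with the stopping map $\omega\mapsto\omega_{\cdot\wedge t}$, which is continuous at every $\omega$ with $\Delta\omega_t=0$. The exceptional set $\{\omega:\Delta\omega_t\neq 0\}$ is polar: by Theorem~\ref{tw_rep_sub_lin_d}, under each $\P^\theta\in\mathfrak P$ the canonical process coincides in law with the L\'evy-It\^o integral $B^{0,\theta}$, which almost surely has no jump at the deterministic time $t$, so $c(\{\Delta\omega_t\neq0\})=\sup_\theta\P^\theta(\{\Delta\omega_t\neq0\})=0$. By outer regularity of the Choquet capacity $c$, for each $\epsilon>0$ there is an open set $O\supset\{\Delta\omega_t\neq0\}$ with $c(O)<\epsilon$; outside $O$ the unstopped functional is continuous, hence it admits a q.c.\ version. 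Since $\phi$ is bounded and supported on a set bounded away from zero, $\|\phi\|_{1,A,\v}\le\|\phi\|_\infty\sup_{v\in\v}v(A)<\infty$ by \eqref{eq_property_of_u_d}, so $\phi\in\L^1_b(A,\v)$ and Proposition~\ref{prop_continity_int_d} puts the integral in $\L^1_b(\Omega_t)$; combined with the q.c.\ property, this places it in $\L^1_c(\Omega_t)=L^1_G(\Omega_t)$ by Theorem~\ref{L_G_equal_L_c_d}.

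The main obstacle is the interplay between Skorohod convergence and the cutoff at time $t$: on the full space $\Omega$ a path with a jump at $t$ can be approximated in Skorohod by paths whose jump falls just after $t$ and is therefore not seen in the integral, destroying pointwise continuity. The remedy is to prove continuity first on $\Omega_t$ (where time-changes on the closed interval $[0,t]$ must fix $t$, keeping the jump-at-$t$ behavior consistent under convergence) and then to exploit the polarity of the set of paths jumping at a deterministic time to recover q.c.\ on $\Omega$. A secondary, smaller issue is the discontinuity of $\I_A$ across $\partial A$; this is absorbed by the fact that $\phi\in C_b(A)$ is continuous on $\R^d$ and vanishes on $\partial A\cap A^c$, so the product $\phi\cdot\I_A$ remains continuous at boundary-landing jumps.
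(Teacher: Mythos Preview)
Your proof is correct and follows essentially the approach the paper indicates (the paper omits the argument and refers to Theorem~28 in \cite{moj}, but the two-step structure you use---continuity on $\Omega_t$ via matching of large jumps under $J_1$-convergence, then passage to $\Omega$ by showing the set of paths jumping at the fixed time $t$ is polar---matches the remark following the proposition and the technique reused later in the proof of Proposition~\ref{prop_tau_qc_d}). One minor technical remark: you invoke outer regularity of $c$ for the non-closed Borel polar set $\{\Delta\omega_t\neq 0\}$; the paper's preferred route (cf.\ the proof of Proposition~\ref{prop_tau_qc_d}) is to work instead with the \emph{closed} polar set $\{\omega:\Delta\omega_t\in\bar A\}$ and apply Lemma~3.4 of \cite{Song_hitting}, which is slightly cleaner since it avoids appealing to Choquet capacitability for non-closed sets.
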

The proof of this proposition is very similar to the proof of Theorem 28 in \cite{moj}, so we omit it. The remarkable thing about this proposition is that even though the integral $t\mapsto\int_{\r0}\phi(z)L(t,dz)$ is adapted, its continuity w.r.t. $\omega$ depends on what happens just after time $t$. We need to underline that the function $\omega\mapsto \int_{\r0}\phi(z)L(t,dz)(\omega)$ usually has some discontinuities even for very regular $\phi$'s. 
\begin{cor}\label{cor_int_qc_L1c_d}
	Let $\phi\in\L^1_c(\r0,\v)$. Then for every $t>0$ the integral $\int_{\r0}\phi(z)L(t,dz)\in L^1_G(\Omega_t)$.
\end{cor}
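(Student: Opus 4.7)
The plan is to approximate $\phi\in\L^1_c(\r0,\v)$ by continuous functions with supports bounded away from $0$, apply Proposition \ref{prop_phi_continuous_integral_too_d} to each approximant, and then pass to the limit using the continuity of the Poisson integral as an operator (Proposition \ref{prop_continity_int_d} together with Remark \ref{rem_extenstion_of_integral_d}).

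First, by the very definition of $\L^1_c(\r0,\v)$ as the completion of $\bigcup_{B\subset\r0,\,0\notin\bar B}C_b(B)$ under $\|\cdot\|_{1,\r0,\v}$, there exists a sequence of functions $\phi_n\in C_b(A_n)$, with $A_n\subset\r0$ bounded away from $0$, such that $\|\phi_n-\phi\|_{1,\r0,\v}\to0$. Since $0\notin\bar A_n$, Proposition \ref{prop_phi_continuous_integral_too_d} applies to each $\phi_n$ and gives that $\int_{\r0}\phi_n(z)L(t,dz)$ has a quasi-continuous version and belongs to $L^1_G(\Omega_t)$.

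Next, the Poisson integral is a continuous operator from $\L^1_b(\r0,\v)$ to $\L^1_b(\Omega_t)$ by Proposition \ref{prop_continity_int_d} and Remark \ref{rem_extenstion_of_integral_d}; in fact, the bound derived in \eqref{eq_continuity2_d} gives
\[
\GE\left[\left|\int_{\r0}(\phi_n-\phi)(z)L(t,dz)\right|\right]\leq t\,\|\phi_n-\phi\|_{1,\r0,\v}\longrightarrow 0.
\]
Hence $\int_{\r0}\phi_n(z)L(t,dz)\to\int_{\r0}\phi(z)L(t,dz)$ in the $\|\cdot\|_1=\GE[|\cdot|]$-norm.

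Finally, $L^1_G(\Omega_t)$ is by construction the completion of $\lipT$ under $\|\cdot\|_1$, hence a closed subspace of $\L^1_b(\Omega_t)$ for that same norm. Since each $\int_{\r0}\phi_n(z)L(t,dz)$ lies in $L^1_G(\Omega_t)$ and the sequence converges in $\|\cdot\|_1$, the limit $\int_{\r0}\phi(z)L(t,dz)$ also lies in $L^1_G(\Omega_t)$. The only delicate step is making sure that the approximants chosen from the definition of $\L^1_c(\r0,\v)$ really have supports bounded away from $0$, which is precisely how that space is defined in the case $0\in\bar A$ considered here; everything else is a standard closure argument.
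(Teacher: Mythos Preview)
Your proof is correct and follows essentially the same approach as the paper: approximate $\phi$ by $\phi_n\in C_b(A_n)$ with $0\notin\bar A_n$, use Proposition \ref{prop_phi_continuous_integral_too_d} to get $\int_{\r0}\phi_n(z)L(t,dz)\in L^1_G(\Omega_t)$, then invoke the continuity of the integral operator (Remark \ref{rem_extenstion_of_integral_d}) and the closedness of $L^1_G(\Omega_t)$ to pass to the limit. Your version merely spells out the closure argument in slightly more detail than the paper does.
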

\begin{proof}
	By the definition of $\L^1_c(\r0,\v)$ there exists a sequence $(\phi_n)_n,\ \phi_n\in C_{b}(A_n)$ ($0\notin \bar A_n$) under the norm $\|.\|_{1,\r0,\v}$.  We know that the integral is a continuous operator from $\L^1_b(\r0,\v)$ to $\L^1_b(\Omega_t)$ (see Remark \ref{rem_extenstion_of_integral_d}), hence $X_n:=\int_{\r0}\phi_n(z)L(t,dz)$ converges to  $\int_{\r0}\phi(z)L(t,dz)$ in $\GE[|.|]$ norm. But we also know by Proposition \ref{prop_phi_continuous_integral_too_d} that $X_n\in L^1_G(\Omega_t)$, therefore so does its limit.
\end{proof}

Hence, we have found a sufficient condition for the Poisson integral to be quasi-continuous. However, what about the necessary conditions? In the following subsections we will establish that the regularity of the integral w.r.t. $\omega$ also implies the regularity of the integrand w.r.t. $z$. 

\subsection{The regularity of the jumps times and related quasi-continuity theorems}\label{ssec_regular_stop_times_d}
Let $A$ be an open set in $\r0$ such that $0\notin\bar A$. We will define the following stopping times
\begin{align*}
	{\tau^0_A}=\overline{\tau^0_A}:=0,\quad {\tau^k_A}:=\inf\{t>{\tau^{k-1}_A}\colon \Delta X_t \in A\},\quad \overline{\tau^k_A}:=\inf\{t>\overline{\tau^{k-1}_A}\colon \Delta X_t \in \bar A\}\quad k=1,2\ldots
\end{align*}
The stopping times are well-defined and we have that $\overline{\tau^k_A}\leq \tau^k_A$. We have also the  lemma.
\begin{lem}\label{lem_capacity_of_tau_d}
 Let $A$ be an open subset of $\r0$ such that $0\notin \bar A$ and $c^{\v}(\partial A)=0$. Then for any $t>0$ we have that
 \[
 	c(\exists\ u\in [0,t] \textrm{ s.t. }\Delta X_u\in \partial A)=c(\overline{\tau^k_A}\wedge t< \tau^k_A\wedge t)=0.
 \]
 Consequently,
  \[
 	c(\exists\ u>0 \textrm{ s.t. }\Delta X_u\in \partial A)=c(\overline{\tau^k_A}< \tau^k_A)=0.
 \]
\end{lem}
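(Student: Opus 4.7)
The plan is to reduce the claim to a single sublinear-expectation computation for the Poisson random measure counted on $\partial A$, then extract the two capacity statements.

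First I would observe that since $A$ is open with $0\notin \bar A$, the boundary $\partial A = \bar A \setminus A$ is a Borel subset of $\r0$ which is bounded away from $0$. Therefore $\I_{\partial A} \in B_b(\partial A) \subset \L^1_b(\partial A,\v)$, and Proposition \ref{prop_continity_int_d} together with Remark \ref{rem_extenstion_of_integral_d} guarantees that the integer-valued random variable $L(t,\partial A) = \int_{\partial A} L(t,dz)$ belongs to $\L^1_b(\Omega_t)$ and satisfies
\[
    \GE[L(t,\partial A)] \;=\; t \sup_{v\in \v} v(\partial A) \;=\; t\, c^{\v}(\partial A) \;=\; 0.
\]
Next, since $L(t,\partial A) \geq 0$, the Markov inequality applied inside the representation $c(\cdot) = \sup_{\P \in \mathfrak{P}} \P(\cdot)$ of Theorem \ref{tw_rep_sub_lin_d} yields $c(L(t,\partial A) \geq 1) \leq \GE[L(t,\partial A)] = 0$. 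By the very definition of the Poisson random measure, $\{L(t,\partial A) \geq 1\}$ coincides with the event $\{\exists\, u\in (0,t]\colon \Delta X_u \in \partial A\}$, so the first part of the equality in the lemma follows once we merge $u=0$ (at which there is no jump since $X_0 = 0$).

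The remaining inclusion $\{\overline{\tau^k_A} \wedge t < \tau^k_A \wedge t\} \subset \{\exists\, u\in [0,t]\colon \Delta X_u \in \partial A\}$ is the small combinatorial step. If $\overline{\tau^k_A}(\omega) \wedge t < \tau^k_A(\omega) \wedge t$ then $\overline{\tau^k_A}(\omega) \leq t$, so by definition there are at least $k$ jumps of $X$ into $\bar A$ during $[0,\overline{\tau^k_A}(\omega)] \subset [0,t]$, whereas strictly fewer than $k$ land in $A$. Hence at least one jump in $[0,t]$ falls in $\bar A \setminus A$, which equals $\partial A$ because $A$ is open. The desired capacity estimate follows.

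Finally, for the ``consequently'' clause I would use countable subadditivity of $c$ (immediate from its representation as a supremum of probability measures): writing
\[
    \{\exists\, u>0 \colon \Delta X_u \in \partial A\} \;=\; \bigcup_{n\in \N} \{\exists\, u \in [0,n]\colon \Delta X_u \in \partial A\}
\]
and similarly $\{\overline{\tau^k_A} < \tau^k_A\} = \bigcup_{n\in \N}\{\overline{\tau^k_A} \wedge n < \tau^k_A \wedge n\}$, both sides are countable unions of polar sets and hence polar. The only genuinely subtle point in this argument is verifying the combinatorial inclusion relating $\overline{\tau^k_A}<\tau^k_A$ to the occurrence of a jump into $\partial A$; everything else is bookkeeping built on the already-established integral formula.
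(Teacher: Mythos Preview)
Your proposal is correct and follows essentially the same approach as the paper: both arguments hinge on the identity $\GE[L(t,\partial A)] = t\,c^{\v}(\partial A) = 0$ from Remark~\ref{rem_extenstion_of_integral_d}, deduce from non-negativity that the event $\{\exists\,u\in(0,t]:\Delta X_u\in\partial A\}$ is polar, establish the inclusion $\{\overline{\tau^k_A}\wedge t<\tau^k_A\wedge t\}\subset\{\exists\,u\in[0,t]:\Delta X_u\in\partial A\}$, and then pass to the unbounded-time statement by countable subadditivity. The only cosmetic difference is that you use Markov's inequality directly where the paper argues by contradiction.
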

\begin{proof}
First note that
\begin{align*}
	c(\overline{\tau^k_A}\wedge t< \tau^k_A\wedge t)&=c(\exists\ u \textrm{ s.t. }{\overline{\tau^{k-1}_A}\wedge t<u\leq \tau^k_A\wedge t}\, \textrm{ and }\Delta X_u\in \partial A)\\
	&\leq c(\exists\ u\in [0,t] \textrm{ s.t. }\Delta X_u\in \partial A).
\end{align*} 
So to prove the first assertion it is sufficient to check that $c(\exists\ u\in [0,t] \textrm{ s.t. }\Delta X_u\in \partial A)=0$. Assume the opposite. Then $L(t,\partial A)\geq 1$ on a set with positive capacity. Then by the definition of $c$ there exists a measure $\Q\in\mathfrak{P}$ s.t. $L(t,\partial A)\geq 1 $ with positive $\Q$-probability. Since the Poisson random measure is non-negative, we know then that $\GE[L(t,\partial A)]\geq \E^{\Q}[L(t,\partial A)]>0$.  But by Remark \ref{rem_extenstion_of_integral_d}, Point 3, we know that $\GE[L(t,\partial A)]=t\cdot c^{\v}(\partial A)=0$. Hence \[0=c(\exists\ u\in [0,t] \textrm{ s.t. }\Delta X_u\in \partial A)=c(\overline{\tau^k_A}\wedge t< \tau^k_A\wedge t).\]

Now, note that the second assertion is a simple consequence of the first one:
\begin{align*}
	c(\overline{\tau^k_A}< \tau^k_A)&=c(\exists\ q\in\Q_{+} \textrm{ s.t. }\overline{\tau^{k-1}_A}<q< \tau^k_A) \leq\sum_{q\in\Q_{+}}\,c(\overline{\tau^{k-1}_A}<q< \tau^k_A)\\
	&\leq \sum_{q\in\Q_{+}}\,c(\overline{\tau^{k-1}_A\wedge q}< \tau^k_A\wedge q)=0,
\end{align*} 
using the first assertion of the lemma. Similarly
\[
	c(\exists\ u>0 \textrm{ s.t. }\Delta X_u\in \partial A)\leq \sum_{n=1}^{\infty} c(\exists\ u\in [0,n] \textrm{ s.t. }\Delta X_u\in \partial A)=0\qedhere
\]
\end{proof}

Using this lemma it is easy to prove the quasi-continuity of the stopping times $\tau_k^A$.
\begin{prop}\label{prop_tau_qc_d}
	 Let $A$ be an open subset of $\r0$ such that $0\notin \bar A$ and $c^{\v}(\partial A)=0$. Then for any $t>0$ the random variables $\tau_A^k$, $ \I_{\{\tau^k_A\leq t\}}$ and $\Delta X_{\tau^k_A}$ are q.c. 	 
\end{prop}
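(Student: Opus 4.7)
The plan is to show that each of the three random variables is continuous at every point of the quasi-sure event
\[
\Omega_0 := \bigcap_{k\ge 1}\{\omega\in\Omega : \overline{\tau}_A^k(\omega)=\tau_A^k(\omega)\},
\]
whose complement has capacity zero by Lemma \ref{lem_capacity_of_tau_d}, and then to extract quasi-continuity from the outer regularity of $c=\sup_{\P\in\mathfrak{P}}\P(\cdot)$ on the Polish space $\d0$: for each $\epsilon>0$ one picks an open set $O_\epsilon\supset\Omega_0^c$ with $c(O_\epsilon)<\epsilon$ and observes that the restrictions of the random variables to the closed set $O_\epsilon^c\subset\Omega_0$ are then continuous, which is quasi-continuity.

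The key step is continuity of $\tau_A^k$ on $\Omega_0$, which I would prove by a semi-continuity squeeze. Since $A$ is open, $\tau_A^k$ is upper semi-continuous on all of $\Omega$: the set $\{\tau_A^k<c\}$ records paths with at least $k$ jumps of size in $A$ strictly before time $c$, and Skorohod $J_1$ convergence $\omega^n\to\omega$ forces these finitely many large jumps to be matched by jumps of $\omega^n$ at nearby times with sizes converging to the originals, hence still in the open set $A$ and still before $c$. On $\Omega_0$ one also has lower semi-continuity of $\overline{\tau}_A^k$: for $\omega\in\Omega_0$ the finitely many jumps of $\omega$ of size at least $\eta>0$ (with $\eta$ coming from $0\notin\bar A$) on any compact $[0,T]$ sit either strictly inside $A$ or strictly outside $\bar A$ (since none lie in $\partial A$), so for $n$ large the matched jumps of $\omega^n$ land on the same side of $\partial A$; new jumps of $\omega^n$ have sizes tending to zero and therefore cannot enter $\bar A$. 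Combining $\overline{\tau}_A^k\le\tau_A^k$ everywhere, the equality $\tau_A^k=\overline{\tau}_A^k$ on $\Omega_0$, and the two semi-continuity bounds yields $\lim_n\tau_A^k(\omega^n)=\tau_A^k(\omega)$ at each $\omega\in\Omega_0$.

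The companion statements fall out from this. Continuity of $\Delta X_{\tau_A^k}$ on $\Omega_0\cap\{\tau_A^k<\infty\}$ is built into the same Skorohod matching, since matched jumps converge also in size. For the indicator $\I_{\{\tau_A^k\le t\}}$ the only additional obstruction is discontinuity on $\{\tau_A^k=t\}$, and this set has capacity zero: it is contained in $\{\Delta X_t\neq 0\}$, and stationary independent increments together with condition \ref{condition_d} of Definition \ref{def_levy_d} give $\GE[|X_{1/n}|]\to 0$, so a Fatou-type bound forces $\GE[|\Delta X_t|]\le\liminf_n\GE[|X_t-X_{t-1/n}|]=0$, and Chebyshev's inequality closes the argument.

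The main technical point is the lower semi-continuity of $\overline{\tau}_A^k$ on $\Omega_0$: one must rule out that Skorohod perturbations push a jump of $\omega$ lying just outside $\bar A$ into $\bar A$, and the defining condition of $\Omega_0$ provides precisely the uniform buffer around $\partial A$ for the finitely many relevant large jumps. A secondary point is outer regularity of $c$ on $\d0$, which is standard once one invokes the tightness of $\mathfrak{P}$, in the spirit of \cite{Denis_function_spaces}.
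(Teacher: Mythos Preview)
Your approach is correct and essentially mirrors the paper's proof: the paper also works on the set $G=\{\tau_A^k=\overline{\tau}_A^k\}$ (your $\Omega_0$), invokes Lemma~\ref{lem_capacity_of_tau_d} for polarity of its complement, and passes to an open superset of small capacity via Lemma~3.4 in \cite{Song_hitting} (your outer-regularity step), then handles $\I_{\{\tau_A^k\le t\}}$ by additionally excising the polar closed set $\{\Delta\omega_t\in\bar A\}$. Your semicontinuity squeeze for the continuity of $\tau_A^k$ on $\Omega_0$ and your Fatou argument for $c(\Delta X_t\neq 0)=0$ spell out what the paper leaves as one-line assertions, but the structure is the same.
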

\begin{proof}
Consider a set $G=\{\tau^k_A=\overline{\tau^k_A}\}$. It is easy to see that $\omega\mapsto\tau^k_A(\omega)$ is a continuous function on $G$. On the other hand $G^c\subset \{\omega\in\Omega\colon\exists\ u>0\textrm{ s.t. }\Delta w_u\in \partial A\}=:F$. $F$ is a closed set and by Lemma \ref{lem_capacity_of_tau_d} we know that $c(F)=0$. Hence, by Lemma 3.4. in \cite{Song_hitting} for each $\epsilon>0$ there exists an open set $O^{\epsilon}$ with $c(O^{\epsilon})<\epsilon$ s.t. $F\subset O^{\epsilon}$. But $(O^{\epsilon})^c\subset G$ and therefore $\tau^k_A$ is continuous on $O^{\epsilon}$. As a consequence, $\tau^k_A$ is q.c. 

To prove that $\I_{\{\tau^k_A\leq t\}}$ and $ \Delta X_{\tau^k_A}$ are q.c. we fix $\epsilon>0$ and define the set $O^{\epsilon/2}$ as above.  Consider also a set $H:=\{\omega\in\Omega\colon \Delta \omega_t\in \bar A\}$. It is easy to check by the definition of $\P^{\theta}$ used in the definition of $c$ that this set is polar. It is also a closed set, hence there exists an open set $\tilde O^{\epsilon/2}$ containing $H$ and $c(\tilde O^{\epsilon/2})<\epsilon/2$. Now define $O:= O^{\epsilon/2}\cup \tilde O^{\epsilon/2}$ and note that $c(O)<\epsilon$.

Take any sequence $(\omega^n)_n\subset O^c$ which converges in Skorohod topology to $\omega$ (also in $O^c$ by its closedness). Then by the choice of $O^{\epsilon/2}$ we have $\tau^k_A(\omega^n)\to\tau^k_A(\omega)$.  We also know that $\tau^k_A(\omega)\neq t$ by the choice of  $\tilde O^{\epsilon/2}$, hence either $\tau^k_A(\omega^n)<t$ for large $n$ (if $ \tau^k_A(\omega)<t$) or $ \tau^k_A(\omega^n)> t$ for large $n$  (if $ \tau^k_A(\omega)>t$). In any case we have the convergence $\I_{\{\tau^k_A\leq t\}}(\omega^n)\to \I_{\{\tau^k_A\leq t\}}(\omega^n)$. Hence $\I_{\{\tau^k_A\leq t\}}$ is q.c. Similarly, by the convergence  $\omega^n\to \omega$ we get that $\Delta \omega^n_{\tau^k_A(\omega^n)}\to \Delta \omega_{\tau^k_A(\omega)}$. Hence $ \Delta X_{\tau^k_A}$ is also q.c.
\end{proof}	

We may also include another jump times in the following manner without losing the quasi-continuity.
\begin{prop}\label{prop_qc_different_stop_times_d}
		 Let $A$ be an open subset of $\r0$ such that $0\notin \bar A$ and $c^{\v}(\partial A)=0$, $0\notin \bar A$. Then for any $k>l>0$ and $C>0$ the random variable $\I_{\{\tau^l_A\leq \tau^k_A-C\}}$ is  q.c. 
\end{prop}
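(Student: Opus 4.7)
My plan is to follow the template of Proposition~\ref{prop_tau_qc_d}: use the quasi-continuity of the two stopping times to trim $\Omega$ down to a closed set of almost full capacity on which the relevant difference is continuous, and then cover the residual coincidence set $\{\tau^l_A+C=\tau^k_A\}$ by an open set of arbitrarily small capacity. The key observation is that $\I_{\{\tau^l_A\leq\tau^k_A-C\}}=\I_{\{f\leq 0\}}$ for $f(\omega):=\tau^l_A(\omega)+C-\tau^k_A(\omega)$, which is continuous as an $\overline{\R}$-valued function wherever both stopping times are continuous; on such a set the only candidate discontinuity locus of $\I_{\{f\leq 0\}}$ is $\{f=0\}$.

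Given $\epsilon>0$, I would first invoke Proposition~\ref{prop_tau_qc_d} to pick open sets $O_1,O_2$ with $c(O_i)<\epsilon/3$ outside which $\tau^l_A$ and $\tau^k_A$ are, respectively, continuous. The heart of the proof is then the polarity claim $c(\{f=0\})=0$: the event $\{\tau^l_A+C=\tau^k_A\}$ forces $X$ to have a jump (of size in $\bar A$) at the random time $\tau^l_A+C$. By Theorem~\ref{tw_rep_sub_lin_d}, under each $\P^\theta\in\mathfrak P$ the process $X$ is distributed as the \itolevy integral $B^{0,\theta}$, whose jumps occur exactly at the jumps of the Poisson random measure $N$. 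Since $\tau^l_A$, read along the paths of $B^{0,\theta}$, is a $\mathbb G$-stopping time, the strong Markov property of $N$ identifies the shifted random measure $N(\tau^l_A+\cdot,\cdot)-N(\tau^l_A,\cdot)$ with an independent copy of $N$; this independent copy a.s.\ has no jump at the deterministic time $C$, so $\P^\theta(f=0)=0$, and taking the supremum over $\theta$ gives $c(\{f=0\})=0$. The set $\{f=0\}\cap(O_1\cup O_2)^c$ is then closed in $\Omega$ (preimage of $\{0\}$ under the continuous function $f|_{(O_1\cup O_2)^c}$) and polar, so by Lemma~3.4 of \cite{Song_hitting} it can be enclosed in an open $O_3$ with $c(O_3)<\epsilon/3$. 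On the complement of $\tilde O:=O_1\cup O_2\cup O_3$, Skorohod convergence preserves the sign of $f$ and hence the value of $\I_{\{f\leq 0\}}$, which yields the required quasi-continuity.

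The main obstacle is the polarity claim, which must hold uniformly in $\theta\in\a^{\u}_{0,\infty}$. Uniformity is actually automatic, since the strong Markov argument is carried out on the fixed reference space $(\tilde\Omega,\G,\P_0)$ whose Poisson law is independent of $\theta$; the integrand enters only through the map $B^{0,\theta}$ used to transport events back to $\Omega$. The verification that $\tau^l_A$ read along $B^{0,\theta}$ is a $\mathbb G$-stopping time is a routine consequence of Borel-measurability of $\theta^d(u,\cdot)$ and the \cadlag regularity of the paths, and the degenerate case $\tau^l_A(\omega)=\infty$ (which forces $\tau^k_A(\omega)=\infty$ and the indicator to equal $1$) contributes no discontinuity once $\tau^l_A$ and $\tau^k_A$ are treated as continuous $[0,\infty]$-valued functions outside $O_1\cup O_2$.
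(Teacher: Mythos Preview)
Your argument is correct and follows the same template as the paper's proof: reduce to a closed set of nearly full capacity on which both $\tau^l_A$ and $\tau^k_A$ are continuous, show that the coincidence set $\{\tau^l_A+C=\tau^k_A\}$ is polar, then apply Lemma~3.4 of \cite{Song_hitting} to cover the (closed, polar) intersection by a small open set.

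The only genuine difference lies in how the polarity of $\{\tau^l_A+C=\tau^k_A\}$ is established. You transport the event to $(\tilde\Omega,\P_0)$, verify that $\sigma^l:=\tau^l_A\circ B^{0,\theta}$ is a $\mathbb G$-stopping time, and use the strong Markov property of $N$ to conclude that $N$ almost surely has no atom at $\sigma^l+C$. The paper instead avoids the strong Markov property entirely: it embeds the coincidence event into $\{\exists\,u>0:\Delta X_{u-C}\in A,\ \Delta X_u\in A\}$ and then, using the auxiliary set $B:=\bigcup_{v\in\v}g_v^{-1}(A)$ from Remark~\ref{rem_g_v_d} (which satisfies $0\notin\bar B$), bounds this uniformly in $\theta$ by the $\theta$-independent event $\{\exists\,u>0:\Delta N_{u-C}\in B,\ \Delta N_u\in B\}$, whose $\P_0$-probability is zero by an elementary property of finite-intensity Poisson processes. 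The paper's route is slightly more elementary---no stopping-time verification or strong Markov---but pays for it by invoking the construction of $g_v$ from the appendix; your route is self-contained once the stopping-time property of $\sigma^l$ is granted. Either way the structure and conclusion are the same.
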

\begin{proof}
	Fix $\epsilon >0$. By the quasi-continuity of $\tau^k_A$ and $\tau^l_A$ we may find an open set $O_1$ s.t. $c( O_1)<\epsilon/2$ and both stopping times are continuous w.r.t. $\omega$. 	

	Let $F:=\{\tau^l_A=\tau^k_A-C\}$. We claim that $F$ is polar, too. We use again the representation theorem for sublinear expectations.
	\begin{align*}
		c(F)&=\sup_{\theta\in\a^{\u}_{0,\infty}}\P^{\theta}\left(F\right)\leq \sup_{\theta\in\a^{\u}_{0,\infty}}\P^{\theta}\left(\exists_{u>0}\ \Delta X_{u-C}\in A,\ \Delta X_u\in A\right) \\&=\sup_{\theta\in\a^{\u}_{0,\infty}}\P_0\left(\exists_{u>0}\ \Delta B^{0,\theta}_{u-C}\in A,\ \Delta B^{0,\theta}_u\in A\right).
	\end{align*}
	Let $B=\bigcup_{v\in\v} g^{-1}_v(A)$. By the construction of $g_v$ we know that $0\notin \bar B$ (see Remark \ref{rem_g_v_d} and Appendix, Subsection \ref{ssec_g_v_d}). Then
		\begin{align*}
		c(F)&\leq \P_0\left(\exists_{u>0}\ \Delta N_{u-C}\in B,\ \Delta N_u\in B\right)=0
	\end{align*}
	by standard properties of \levy processes.
	
	By the continuity of both stopping times on $O^c_1$ it is obvious that $F\cap O_1^c$ is a closed set. 
	Then, just as in Proposition \ref{prop_tau_qc_d}, we use Lemma 3.4. from \cite{Song_hitting} to find an open set $O_2$ containing $F\cap O^c_1$ with capacity $c(O_2)<\epsilon/2$. Setting $O:=O_1\cup O_2$ we can easily check via argument identical to the one at the end of the proof of Proposition \ref{prop_tau_qc_d} that $\I_{\{\tau^l_A\leq \tau^k_A-C\}}$ is continuous on $O^c$. Of course $c(O)<\epsilon$, hence the indicator is q.c.
\end{proof}

The following result will also be useful. It is also interesting by itself.
\begin{prop}\label{prop_capacity_jump_in_B_d}
	Let $A,B\in\B(\r0)$ s.t. $0\notin A$ and $C\in \B(\R_+)$. Assume that $v(A)>0$ for all $v\in\v$. Then for all $k\geq1$ we have
	\[
		c(\Delta X_{\tau^k_A}\in B)\geq \sup_{v\in\v}\frac{v(B\cap A)}{v(A)}\geq \frac{c^{\v}(B\cap A)}{c^{\v}(A)} 
	\]
	and
	\[
		c(\Delta X_{\tau^k_A}\in B,\ \tau^k_A\in C)\geq \sup_{v\in\v}\frac{v(B\cap A)\mu^{v,A,k}(C)}{v(A)}\geq \frac{c^{\v}(B\cap A)\inf_{v\in\v}\mu^{v,A,k}(C)}{c^{\v}(A)} 
	\]
	where
	$\mu^{v,A,k}$ is Erlang distribution with the shape parameter $k$ and the mean $\frac{k}{v(A)}$.
\end{prop}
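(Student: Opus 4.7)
The plan is to use the upper-expectation representation of the capacity, namely $c(\cdot)=\sup_{\theta\in\a^{\u}_{0,\infty}}\P^{\theta}(\cdot)$ from Theorem \ref{tw_rep_sub_lin_d}, and for each $v\in\v$ to construct an admissible $\theta=\theta_v$ under which the image law $\P^{\theta_v}$ turns the canonical process into a classical pure-jump \levy process whose \levy measure is exactly $v$ (plus an irrelevant deterministic drift and diffusion). Under such a law the joint distribution of $(\tau^k_A,\Delta X_{\tau^k_A})$ is explicitly computable, and the desired lower bounds follow by supremising over $v\in\v$.

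For every $v\in\v$ choose $(p,q)\in\R^d\times\R^{d\times d}$ with $(v,p,q)\in\u$ and define the constant integrand $\theta_v:=(g_v,p,q)$, where $g_v\in\G_{\B}$ is the Borel map provided by Remark \ref{rem_g_v_d}. Admissibility $\theta_v\in\a^{\u}_{0,\infty}$ is immediate: the pointwise condition $(g_v,p,q)\in\tilde\u$ is built in, and the integrability requirement reduces to $\int_{\r0}|g_v(z)|\mu(dz)=\int_{\r0}|z|v(dz)$, which is finite thanks to \eqref{eq_property_of_u_d}. Since the continuous part $t\mapsto pt+\int_0^t q\,dW_s$ of $B^{0,\theta_v}$ contributes no jumps, the jumps of $B^{0,\theta_v}$ under $\P_0$ equal $\{g_v(\Delta N_s):\Delta N_s\neq0\}$ and therefore form, by the mapping theorem for Poisson random measures, a Poisson random measure on $\R_+\times\r0$ with intensity $dt\otimes(\mu\circ g_v^{-1})(dz)=dt\otimes v(dz)$.

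Because $\P^{\theta_v}=\P_0\circ(B^{0,\theta_v}_\cdot)^{-1}$, the jump measure of the canonical process $X$ under $\P^{\theta_v}$ is a Poisson random measure with intensity $dt\otimes v(dz)$. Restricting to jumps in $A$ yields a Poisson point process in time of rate $v(A)\in(0,\infty)$, finite by \eqref{eq_property_of_u_d} and positive by hypothesis. Standard facts for such processes give that $\tau^k_A$ is Erlang distributed with shape $k$ and rate $v(A)$, i.e.\ has law $\mu^{v,A,k}$, independently of the associated jump size $\Delta X_{\tau^k_A}$, whose law is $v(\cdot\cap A)/v(A)$. Hence
\[
\P^{\theta_v}\!\bigl(\Delta X_{\tau^k_A}\in B,\ \tau^k_A\in C\bigr)=\frac{v(B\cap A)}{v(A)}\,\mu^{v,A,k}(C),
\]
and, since $c\geq\P^{\theta_v}$, taking the supremum over $v\in\v$ gives the first inequality of the joint statement; setting $C=\R_+$ recovers the first inequality of the unconditional one.

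To pass to $c^{\v}$, pick a sequence $(v_n)\subset\v$ with $v_n(B\cap A)\to c^{\v}(B\cap A)$. Since $v_n(A)\leq c^{\v}(A)$, we have $v_n(B\cap A)/v_n(A)\geq v_n(B\cap A)/c^{\v}(A)$, and letting $n\to\infty$ after taking the supremum yields the bound $\sup_{v\in\v}v(B\cap A)/v(A)\geq c^{\v}(B\cap A)/c^{\v}(A)$. For the joint statement, insert $\mu^{v,A,k}(C)\geq\inf_{v'\in\v}\mu^{v',A,k}(C)$ before supremising in $v$ and repeat the same argument. The only delicate step is the admissibility verification of $\theta_v$ in the second paragraph, where one must carefully use the finite-first-moment bound on $v$ contained in \eqref{eq_property_of_u_d}; everything else reduces to the classical distributional theory of Poisson random measures combined with the representation theorem for $\GE$.
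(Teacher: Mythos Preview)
Your proof is correct and follows essentially the same approach as the paper: both restrict to constant deterministic controls $\theta_v=(g_v,p,q)$, observe that under $\P^{\theta_v}$ the canonical process is a classical \levy process with \levy measure $v$, and then invoke the standard distributional facts about $(\tau^k_A,\Delta X_{\tau^k_A})$ for such processes. The only cosmetic difference is that you spell out the admissibility check and the Poisson mapping argument in more detail than the paper does; one small slip is that the finiteness of $v(A)$ follows from the fact that \levy measures are finite on sets bounded away from $0$ (cf.\ \cite{Peng_levy}, p.~14) rather than directly from \eqref{eq_property_of_u_d}, but this is immaterial.
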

\begin{proof}
	Let $\tilde \a^{\u}_{0,\infty}$ denote the subset of $ \a^{\u}_{0,\infty}$ consisting of all deterministic and constant $\theta$'s. Hence for an arbitrary $\theta\in \tilde \a^{\u}_{0,\infty}$ we have that  $\theta_t\equiv(g_{v}(.), a,\sigma)$ for some $(v,a,\sigma)\in\u$. It is easy to see that the canonical process $X$ under $\P^{\theta}$ for $\theta \in \tilde \a^{\u}_{0,\infty}$ is a classical \levy process with the \levy triplet $(a,\sigma^2,v)$. By the standard theory of \levy processes and the properties of the capacity $c$ we get then that 
		\[
		c(\Delta X_{\tau^k_A}\in B)\geq \sup_{\theta\in\tilde \a^{\u}_{0,\infty}}\P^{\theta}(\Delta X_{\tau^k_A}\in B)=\sup_{(v,a,\sigma)\in\u}\frac{v(B\cap A)}{v(A)}=\sup_{v\in\v}\frac{v(B\cap A)}{v(A)}\geq \frac{c^{\v}(B\cap A)}{c^{\v}(A)}.
	\]
	Similarly  we have that
	\begin{align*}
		c(\Delta X_{\tau^k_A}\in B,\ \tau^k_A\in C)&\geq \sup_{\theta\in\tilde \a^{\u}_{0,\infty}}\P^{\theta}(\Delta X_{\tau^k_A}\in B,\ \tau^k_A\in C)=\sup_{\theta\in\tilde \a^{\u}_{0,\infty}}\P^{\theta}(\Delta X_{\tau^k_A}\in B)\,\P^{\theta}( \tau^k_A\in C)
		\\&=\sup_{(v,a,\sigma)\in\u}\frac{v(B\cap A)\mu^{v,A,k}(C) }{v(A)}=\sup_{v\in\v}\frac{v(B\cap A)\mu^{v,A,k}(C)}{v(A)}\\
		&\geq \inf_{v\in\v}\mu^{v,A,k}(C) \frac{c^{\v}(B\cap A)}{c^{\v}(A)}.
	\end{align*}
	We have used here the standard properties of a classical \levy process such as the independence a jump magnitude from the time it occurs, the distribution of the k-th jump time of size in a set $A$ etc.
\end{proof}

Using Proposition \ref{prop_tau_qc_d} and \ref{prop_capacity_jump_in_B_d} we are able to prove the following theorem.
\begin{tw}\label{tw_qc_of_phi_d}
	Let $\phi\in L^0(\r0)$. Assume that there exists an open set $A$ containing the support of $\phi$ such that $0\notin \bar A$ and $c^{\v}(\partial A)=0$. Assume moreover that $\inf_{v\in\v}\, v(A)>0$. \begin{enumerate}
	\item If $\phi(\Delta X_{\tau^k_A}\I_{\{\tau^k_A\leq t\}})$ is. q.c. for some $t>0$ and $k\geq1$ then $\phi$ is $\v$-q.c. 
	\item If $\phi$ is $\v$-q.c.  then $\phi(\Delta X_{\tau^k_A}\I_{\{\tau^k_A\leq t\}})$ is. q.c. for all $t>0$ and $k\geq1$, 
\end{enumerate}	 
\end{tw}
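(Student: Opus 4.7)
The plan is to prove the two implications separately, with direction~(2) being fairly direct while direction~(1) depends crucially on the surjectivity-type lower bound of Proposition~\ref{prop_capacity_jump_in_B_d}.

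For direction~(2), given $\epsilon>0$, I would use $\v$-q.c.\ of $\phi$ to choose an open $V\subset\r0$ with $c^{\v}(V)<\epsilon$ and $\phi|_{V^c}$ continuous, slightly enlarging so that also $\bar V$ has small $\v$-capacity. By Proposition~\ref{prop_tau_qc_d}, fix an open $O_1\subset\Omega$ of small $c$-capacity outside which $\tau^k_A$, $\I_{\{\tau^k_A\leq t\}}$ and $\Delta X_{\tau^k_A}$ are all continuous. The event $E:=\{\tau^k_A\leq t,\ \Delta X_{\tau^k_A}\in\bar V\}$ is contained in $\{L(t,\bar V\cap A)\geq1\}$, so Remark~\ref{rem_extenstion_of_integral_d}(3) gives $c(E)\leq t\,c^{\v}(\bar V\cap A)$. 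Since $E\cap O_1^c$ is closed, I would cover it by an open $O_2$ of small capacity via Lemma~3.4 of~\cite{Song_hitting}, and set $O:=O_1\cup O_2$. Continuity of $Y=\phi(\Delta X_{\tau^k_A}\I_{\{\tau^k_A\leq t\}})$ on $O^c$ then follows: outside $\{\tau^k_A\leq t\}$ one has $Y\equiv0$ locally, while inside, $\Delta X_{\tau^k_A}(\omega)\in\bar V^c$ (which is open), so any convergent sequence in $O^c$ has $\Delta X_{\tau^k_A}$-image eventually in $\bar V^c\subseteq V^c$, the domain of continuity of $\phi$.

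For direction~(1), fix $\epsilon>0$ and, combining q.c.\ of $Y$ with Proposition~\ref{prop_tau_qc_d}, pick an open $O\subset\Omega$ with $c(O)<\epsilon$ outside which $Y$ and the random variables $\tau^k_A$, $\I_{\{\tau^k_A\leq t\}}$, $\Delta X_{\tau^k_A}$ are simultaneously continuous. Define $K_0:=\Delta X_{\tau^k_A}(O^c\cap\{\tau^k_A\leq t\})\subseteq A$, so that $\{\Delta X_{\tau^k_A}\in A\setminus K_0,\ \tau^k_A\leq t\}\subseteq O$ and hence has $c$-capacity at most $\epsilon$. Applying Proposition~\ref{prop_capacity_jump_in_B_d} with $B:=A\setminus K_0$ and $C:=[0,t]$, noting that $c^{\v}(A)<\infty$ (since $0\notin\bar A$ and $\sup_{v\in\v}\int|z|\,v(dz)<\infty$ by~\eqref{eq_property_of_u_d}) while $\inf_{v\in\v}\mu^{v,A,k}([0,t])>0$ (because the Erlang CDF at $t$ is monotone in the rate $v(A)$, which is uniformly bounded below by hypothesis), I would obtain $c^{\v}(A\setminus K_0)\leq C_0\,\epsilon$ for a constant $C_0$ depending only on $A,t,k,\v$. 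A slight enlargement to an open $U\supseteq A\setminus K_0$ with $c^{\v}(U)$ still small gives the candidate bad set.

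It remains to verify $\phi|_{U^c}$ is continuous. Since $\mathrm{supp}(\phi)$ is a closed subset of the open set $A$, $\phi$ vanishes on a neighborhood of $A^c$, so continuity is automatic outside $A$. For $z_0\in U^c\cap A\subseteq K_0$ I would realize $z_0=\Delta X_{\tau^k_A}(\omega_0)$ with $\omega_0\in O^c\cap\{\tau^k_A\leq t\}$, and for $z_n\to z_0$ with $z_n\in U^c$ (which forces $z_n\in K_0$ for large $n$) build $\omega_n'$ from $\omega_0$ by replacing the $k$-th $A$-jump size with $z_n$, keeping all other data unchanged; then $\omega_n'\to\omega_0$ in Skorohod topology, and if $\omega_n'\in O^c$ for large $n$, continuity of $Y$ on $O^c$ yields $\phi(z_n)=Y(\omega_n')\to Y(\omega_0)=\phi(z_0)$. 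The main obstacle is precisely this last point: $O^c$ is closed but not open, so $\omega_n'\to\omega_0\in O^c$ does not by itself guarantee $\omega_n'\in O^c$. I expect this to be resolved by inflating $O$ to a slightly larger open $O^*$ stable under small perturbations of the $k$-th $A$-jump size, controlling the additional capacity through a slicewise (Fubini-type) estimate against $c^{\v}$ using the representation $c=\sup_{\theta}\P^{\theta}$ and the independence structure of the Poisson random measure built from $N(ds,dz)$. This stabilisation is the technical heart of the argument; the remaining steps are capacity bookkeeping.
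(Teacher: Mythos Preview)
Your argument for Point~2 is essentially the paper's: pick an open $\tilde O\subset\r0$ of small $\v$-capacity on whose complement $\phi$ is continuous, transfer it to $\Omega$ via $\{L(t,\tilde O\cap A)\geq1\}$ using Remark~\ref{rem_extenstion_of_integral_d}(3), and combine with an open set from Proposition~\ref{prop_tau_qc_d}. That part is fine.

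For Point~1 there is a genuine gap, and you have correctly identified where it is. Your construction of $\omega_n'$ by replacing the $k$-th $A$-jump of $\omega_0$ is the wrong route: as you note, $O^c$ is merely closed, so $\omega_n'\to\omega_0\in O^c$ gives no reason for $\omega_n'\in O^c$, and your proposed ``stabilisation'' of $O$ under jump perturbations via a Fubini-type estimate is speculative and, as stated, not close to a proof. A second problem lurks earlier: without compactness, $K_0$ need not be closed, so $A\setminus K_0$ need not be Borel-nice, and your ``slight enlargement to an open $U$'' of small $\v$-capacity has no justification (the regularity lemma you would need applies to closed sets).

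The paper resolves both difficulties in one stroke by invoking the relative compactness of $\mathfrak P$ (Prokhorov): choose a compact $K\subset\Omega$ with $c(K^c)$ small and set $F:=O_1^c\cap O_2^c\cap O_3^c\cap K$. Then $F$ is \emph{compact}, and the image $J:=\{\Delta X_{\tau^k_A}\I_{\{\tau^k_A\leq t\}}(\omega):\omega\in F\}$ is compact as well. Continuity of $\phi$ on $J$ now follows from pure topology, with no path surgery: both $g:=\Delta X_{\tau^k_A}\I_{\{\tau^k_A\leq t\}}$ and $\phi\circ g$ are continuous on the compact $F$, so $\phi|_{J}$ is continuous (the induced map $F/{\sim}\to J$ is a homeomorphism by compact--Hausdorff, and $\phi\circ g$ factors through it). Since $J$ is closed, $J^c\cap A$ is already open, so no enlargement is needed; Proposition~\ref{prop_capacity_jump_in_B_d} together with $\{\Delta X_{\tau^k_A}\in J^c,\ \tau^k_A\leq t\}\subset F^c$ then gives $c^{\v}(J^c\cap A)\leq\epsilon$ directly. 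In short: the missing idea is Prokhorov compactness, which replaces your path-perturbation programme entirely.
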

\begin{proof}
	\emph{Point 1.}
	
	First note that by assumption we have that $0<\inf_{v\in \v}v(A)$ and hence $\sup_{v\in\v}\frac{k}{v(A)}<\infty$. Consequently, $\inf_{v\in\v}\mu^{v,A,k}([0,t])>0$ as $\{\mu^{v,A,k}\colon v\in\v\}$ is a family of Erlang distributions with shape parameter $k$ fixed and a bounded mean (hence the mass of distribution does not accumulate in the neighbourhood of the infinity).
	
	Fix $\epsilon>0$ and take $\eta=\epsilon\, \inf_{v\in\v}\mu^{v,A,k}([0,t])/(4  c^{\v}(A))$. Note that $c^{\v}(A)<\infty$ as $0\notin \bar A$ (compare with \cite{Peng_levy}, p. 14). By the quasi-continuity of $\Delta X_{\tau^k_A}$, $\I_{\{\tau^k_A\leq t\}}$ and $\phi(\Delta X_{\tau^k}\I_{\{\tau^k_A\leq t\}})$ there exist open sets $O_1$, $O_2$ and $O_3$ such that $c(O_i)<\eta,\ i=1,2,3$ and $\Delta X_{\tau^k_A}$, $\I_{\{\tau^k_A\leq t\}}$ and $\phi(\Delta X_{\tau^k_A}\I_{\{\tau^k_A\leq t\}})$ are continuous respectively on $O_1^c$, $O_2^c$ and $O_3^c$. 
	
	Ren proved in \cite{Ren} that the family $\mathfrak{P}$ is relatively compact hence by Prohorov's theorem there exists a compact set $K$ such that $c(K^c)<\eta$. Take then a set $F=O^c_1\cap O^c_2\cap O^c_3\cap K$. Note that $F$ is a compact set as a closed subset of a compact set $K$ and that $c(F^c)<4\eta$. By the choice of $F$ we know that both $\Delta X_{\tau^k_A}\I_{\{\tau^k_A\leq t\}}$ and $\phi(\Delta X_{\tau^k_A}\I_{\{\tau^k_A\leq t\}})$ are continuous on $F$, therefore $\phi$ is continuous on the set $J:=\{\Delta X_{\tau^k_A(\omega)}\I_{\{\tau^k_A\leq t\}}(\omega)\colon \omega \in F\}$. Then $J\subset A\cup \{0\}$ by the choice of $O^c$. $J$ is also a closed set (or even compact) as an image of a compact set $F$ under a continuous function.
	
	Note also that $\phi$ is continuous on $A^c$ as its support lies in $A$. As both $J$ and $A^c$ are closed sets, we deduce that $\phi$ is a continuous function on $J\cup A^c$, also a closed set.
	
	Our target is to show that ${\v}$-capacity of $(J\cup A^c)^c=J^c\cap A$ is small. We will do that by investigating the capacity of the following event: $\{\Delta X_{\tau^k_{A}}\in J^c,\ \tau^k_A\leq t\}$. By Proposition \ref{prop_capacity_jump_in_B_d} we have that 
	\begin{align*}
		c(\Delta X_{\tau^k_{A}}\in J^c,\ \tau^k_A\leq t)\geq  \frac{c^{\v}(J^c\cap A)}{c^{\v}(A)} \inf_{v\in\v}\mu^{v,A,k}([0,t]) 
	\end{align*}
	hence
	\[
		c^{\v}(J^c\cap A)\leq\frac{ c^{\v}(A)}{ \inf_{v\in\v}\mu^{v,A,k}([0,t]) }\,c(\Delta X_{\tau^k_{A}}\in J^c,\ \tau^k_A\leq t)
	\]
	Note also that we have the following set inclusion $F\subset\{ \Delta X_{\tau^k_{A}}\in J\}\cup\{ \tau^k_A>t \}$. Since, of course, $\{\Delta X_{\tau^k_{A}}\in J^c,\ \tau^k_A\leq t\}^c=\{ \Delta X_{\tau^k_{A}}\in J\}\cup\{ \tau^k_A> t\}$, we easily get $ \{ \Delta X_{\tau^k_{A}}\in J^c,\ \tau^k_A\leq t\}\subset F^c$ and
	\[
		c^{\v}(J^c\cap A)\leq 4\eta\, \frac{ c^{\v}(A)}{ \inf_{v\in\v}\mu^{v,A,k}([0,t]) }=\epsilon.
	\]
	Hence, we have proved the $\v$-quasi-continuity of $\phi$.
	
	\emph{Point 2.}
	
	Fix $\epsilon>0$, $k\geq 1$ and $t>0$ and choose an open subset $\tilde O_t\subset \r0$ such that $c^{\v}(\tilde O_t)<\epsilon/(2t)$ and $\phi$ is continuous on $ \tilde O_t^c$. Note that $\phi$ is also continuous on $A^c$ as the support of $\phi$ lies in $A$. Define the set
	\[
		O_t:=\{\omega\in\Omega\colon \exists\, u\leq t\ \Delta \omega_u\in \tilde O_T\cap  A\}.
	\]
	Hence $O^c_t=\{\omega\in\Omega\colon \forall\, u\leq t\ \Delta \omega_u\in \tilde O^c_t\cup A^c \}$ and it is a closed set. Note also that we can express $O_t$ in the following manner $O_t=\{L(t,\tilde O_t\cap   A)\geq 1\}$. Hence
	\begin{align*}
		\frac{\epsilon}{2}&\geq t\cdot c^{\v}(\tilde O_t\cap A)=\GE[L(t,\tilde O_t\cap A)]=\sup_{\P\in\mathfrak{P}}\E^{\P}[L(t,\tilde O_t\cap A)]\\
		&=\sup_{\P\in\mathfrak{P}}\sum_{k=1}^{\infty}\,k\,\P(\{L(t,\tilde O_t\cap A)=k\})\geq \sup_{\P\in\mathfrak{P}}\sum_{k=1}^{\infty}\,\P(\{L(t,\tilde O_t\cap A)=k\})\\
		&=\sup_{\P\in\mathfrak{P}} \P(\{L(t,\tilde O_t\cap A)\geq 1\})=c(O_t).
	\end{align*}
	Take also a closed set $F$ with $c(F^c)<\epsilon/2$ such that $\Delta X_{\tau^k_{A}}\I_{\{\tau^k_a\leq t\}}$ is continuous on $F$. Put $O:=O_t\cup F^c$. Then $\Delta X_{\tau^k_{A}}\I_{\{\tau^k_a\leq t\}}$ is continuous on $O^c$ and takes values only in $\tilde O_t^c\cup\{0\}$. Hence, $\phi(\Delta X_{\tau^k_{A}}\I_{\{\tau^k_A\leq t\}})$ is continuous on $O^c$. 
\end{proof}
\begin{rem}\label{rem_qc_phi_d}
	We might have also prove Theorem \ref{tw_qc_of_phi_d}, Point 1 assuming the quasi-continuity of $\phi(\Delta X_{\tau^k_A})$ instead of $\phi(\Delta X_{\tau^k_A}\I_{\{\tau^k_A\leq t\}})$. The proof would be a nearly identical (and we need to require only $c^{\v}(A)>0$ instead of $\inf_{v\in\v}v(A)>0$). However by Proposition \ref{prop_tau_qc_d} it is trivial that if $\phi(\Delta X_{\tau^k_A})$ is q.c. then so is $\phi(\Delta X_{\tau^k_A}\I_{\{\tau^k_A\leq t\}})$. On the other hand, one can also prove that if $\inf_{v\in\v}v(A)>0$ and $\phi\in \L_b^1(A,\v)$ then the quasi-continuity of $\phi(\Delta X_{\tau^k_{A}}\I_{\{\tau^k_A\leq t\}})$ for all $t>0$ implies the quasi-continuity of $\phi(\Delta X_{\tau^k_{A}})$. Hence Point 2 in the theorem above might have been slightly stronger.
\end{rem}

	Theorem \ref{tw_qc_of_phi_d} is very interesting, but it doesn't give us the answer to the problem, if the quasi-continuity of the Poisson integral implies the $\v$-quasi-continuity of the integrand. We have of course that
	\[
		\int_{A}\phi(z)L(t,dz)=\sum_{k=1}^{\infty}\phi(\Delta X_{\tau^k_A})\I_{\{\tau_A^k\leq t\}}
	\]
	and it is easy to see that if $\phi(\Delta X_{\tau^k_A}\I_{\{\tau_A^k\leq t\}})$ is q.c. for all $k\geq 1$ then the integral is also q.c. However, we are interested in proving the opposite relation. Even if we assumed that the integral is quasi-continuous for all $t>0$ (which seems to be a reasonable assumption), it is not trivial how to proceed with such a proof. Hence we will take slightly stronger assumptions. First, we remind the notion of a quasi-continuity of a stochastic process (as it was introduced in \cite{Song_Mart_decomp}).
	\begin{defin}
		Let $Y$ be a stochastic process. We say that $Y$ is quasi-continuous on $I$ ($I=[0,T]$ or $\R_{+}$) if for all $\epsilon>0$ there exists an open set $O$ such that $c(O)<\epsilon$ and $(t,\omega)\mapsto Y_t(\omega)$ is a continuous function on $I\times O^c$.
	\end{defin}
	\begin{rem}\label{rem_idea_of_proof_qc_d}
Assume for a moment that the integral $\int_{\r0}\phi(z)L(.,dz)$ is quasi-continuous on $\R_+$ and that the support of $\phi$ lies in an open set $A$, $0\notin \bar A$ with $c^{\v}(\partial A)=0$. Then of course the stopping times $\tau^k_A$ are q.c. and it wouldn't be difficult to prove by stopping the integral that $\phi(\Delta X_{\tau^k_A})$ would also be q.c. By Theorem \ref{tw_qc_of_phi_d} and Remark \ref{rem_qc_phi_d} we would get the $\v$-q.c. of $\phi$. However, the assumption of q.c. of the integral is too strong, as it is show in the next proposition.
\end{rem}
\begin{prop}
	Let $\phi\in C_{b}(\r0)$ with a support bounded away from $0$ and of positive $\v$-capacity. Then the process $t\mapsto \int_{\r0}\phi(z)L(t,dz)$ is not quasi-continuous neither on $\R_{+}$ nor on any $[0,T],\ T>0$.
\end{prop}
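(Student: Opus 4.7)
The plan is to exploit the fact that quasi-continuity on $[0,T]$ demands \emph{joint} continuity in $(t,\omega)$, which forces the sample path $t\mapsto Y_t(\omega)$ to be continuous for every $\omega$ outside the ``bad'' open set, whereas a Poisson integral is intrinsically a pure-jump process. More precisely, suppose for contradiction that $Y_t:=\int_{\r0}\phi(z)L(t,dz)$ is quasi-continuous on $[0,T]$. For arbitrary $\epsilon>0$ pick an open set $O$ with $c(O)<\epsilon$ on which $(t,\omega)\mapsto Y_t(\omega)$ is continuous on $[0,T]\times O^c$. For each $\omega\in O^c$ the slice $t\mapsto Y_t(\omega)$ is then continuous on $[0,T]$. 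Since
\[
    Y_t(\omega)=\sum_{0<s\leq t}\phi(\Delta X_s(\omega))
\]
is a step function whose jump at time $s$ equals $\phi(\Delta X_s(\omega))$, continuity of the slice forces $\phi(\Delta X_s(\omega))=0$ for all $s\in (0,T]$. Hence, setting
\[
    N:=\{\omega\in\Omega\colon \exists\, s\in(0,T]\textrm{ s.t. } \phi(\Delta X_s(\omega))\neq 0\},
\]
we obtain $N\subset O$, so $c(N)<\epsilon$ for every $\epsilon>0$ and therefore $c(N)=0$.

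To derive a contradiction I would show $c(N)>0$ using an elementary Poisson argument under a well-chosen probability in $\mathfrak{P}$. From the hypothesis that $\mathrm{supp}(\phi)$ has positive $\v$-capacity, combined with the continuity of $\phi$, one obtains some $v_0\in\v$ and $\delta>0$ with $v_0(\{|\phi|>\delta\})>0$ (in particular $v_0(\{\phi\neq 0\})>0$). Take the constant integrand $\theta_0\equiv(g_{v_0},0,0)\in\tilde\a^{\u}_{0,\infty}$ used in the proof of Proposition~\ref{prop_capacity_jump_in_B_d}. Under $\P^{\theta_0}$ the canonical process $X$ is a classical L\'evy process with L\'evy measure $v_0$, so the number of its jumps falling into the set $\{\phi\neq 0\}$ during $(0,T]$ is Poisson-distributed with parameter $Tv_0(\{\phi\neq 0\})>0$ (finite because $\mathrm{supp}(\phi)$ is bounded away from $0$). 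Therefore
\[
    c(N)\geq \P^{\theta_0}(N)=1-e^{-Tv_0(\{\phi\neq 0\})}>0,
\]
contradicting $c(N)=0$.

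Finally, quasi-continuity on $\R_+$ trivially implies quasi-continuity on $[0,T]$ for any $T>0$, so the previous step rules out this case as well. The only genuinely delicate point in the whole argument is the passage from ``$\mathrm{supp}(\phi)$ has positive $\v$-capacity'' to the existence of $v_0\in\v$ with $v_0(\{\phi\neq 0\})>0$; it could conceivably fail only if some $v_0$ concentrated all of its $\mathrm{supp}(\phi)$-mass on the boundary $\partial\{\phi\neq 0\}$, but this is excluded by approximating $\{\phi\neq 0\}$ from inside via the open sublevel sets $\{|\phi|>1/n\}$ and using $\sigma$-continuity of $v_0$ along with the continuity of $\phi$.
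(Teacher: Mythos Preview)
Your argument is correct and shares its opening move with the paper: both assume quasi-continuity, pick an open set $O$ of small capacity, and observe that joint continuity on $[0,T]\times O^c$ forces $\phi(\Delta X_s(\omega))=0$ for every $s$ and every $\omega\in O^c$. From there, however, the two proofs diverge.

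The paper does not introduce the event $N$ at all. Instead it invokes the uniform-integrability result of Denis--Hu--Peng (their Proposition~19) together with Corollary~\ref{cor_int_qc_L1c_d}, which places $\int_{\r0}|\phi(z)|L(t,dz)$ in $L^1_G(\Omega_t)$: for every $\epsilon>0$ there is $\delta>0$ such that $c(A)<\delta$ implies $\GE\bigl[\int_{\r0}|\phi(z)|L(t,dz)\I_A\bigr]<\epsilon$. Choosing $O$ with $c(O)<\delta$ and using that the integral vanishes on $O^c$, the paper bounds $\GE\bigl[\int_{\r0}|\phi(z)|L(t,dz)\bigr]\leq\epsilon$, contradicting the explicit formula $\GE[\int|\phi|L(t,dz)]=t\sup_{v\in\v}\int|\phi|\,dv>0$. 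Your route is more direct and more elementary: you bypass the $L^1_G$-uniform-integrability machinery entirely by exhibiting a single law $\P^{\theta_0}\in\mathfrak{P}$ (a classical L\'evy law with measure $v_0$) under which $N$ has positive probability, hence $c(N)>0$. This is cleaner and self-contained; the paper's approach has the mild advantage of reusing the expectation formula from Remark~\ref{rem_extenstion_of_integral_d} rather than appealing to a particular element of $\mathfrak{P}$.

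Two small remarks. First, since $\u$ is not assumed to be a product set, you should take $\theta_0\equiv(g_{v_0},p_0,q_0)$ for some $(v_0,p_0,q_0)\in\u$ rather than $(g_{v_0},0,0)$; the drift and diffusion do not affect the jump count, so nothing changes. Second, the ``delicate point'' you flag---passing from $c^{\v}(\mathrm{supp}\,\phi)>0$ to the existence of $v_0$ with $v_0(\{\phi\neq0\})>0$---is not actually resolved by your sublevel-set argument (if $v_0(\{\phi\neq0\})=0$, then $v_0(\{|\phi|>1/n\})=0$ for all $n$ and $\sigma$-continuity gives nothing). The paper glosses over the analogous step when asserting $\sup_v\int|\phi|\,dv>0$; in both proofs the hypothesis is effectively being read as $c^{\v}(\{\phi\neq0\})>0$, which is the natural nondegeneracy condition here.
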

\begin{proof}
	The proof will be given only for $I=\R_{+}$, as the other case follows exactly the same argument. We assume the contrary. First note that by Proposition 19 in \cite{Denis_function_spaces} and Corollary \ref{cor_int_qc_L1c_d} we have that for each $\epsilon>0$ there exists $\delta>0$ such that if $A\in \B(\Omega)$ with $c(A)\leq \delta$ then
	\begin{equation}\label{eq_uniform_integrab_d}
			\GE[\int_{\r0}|\phi(z)|L(t,dz)\I_{A}]\leq \epsilon
	\end{equation}

	 Hence we fix $t$, $\epsilon>0$ and take $\delta$ as above. By assumed quasi-continuity we can choose an open set $O$ such that $c(O)<\delta$ and $(t,\omega)\mapsto \int_{\r0}\phi(z)L(t,dz)(\omega)$ is continuous on $[0,\infty[\times O^c$. Let $A$ be the support of $\phi$. Fix any $r>0$ and $\omega\in O^c$ and take a sequence $r_n\uparrow r$. By the assumption we have that
	\[
		\sum_{0<u\leq r_n} \phi(\Delta \omega_u)\to \sum_{0<u\leq r} \phi(\Delta \omega_u),
	\]
	hence $\phi(\Delta \omega_r)=0$. But $r$ and $\omega$ were arbitrary, hence
	\[
		\phi(\Delta X.(.))\equiv 0
	\]
	on $[0,\infty[\times O^c$ and we conclude that $\int_{\r0}|\phi(z)|L(.,dz)\equiv 0 $ on $[0,\infty[\times O^c$.
	
	Now we know that $\GE[\int_{\r0}|\phi(z)|L(t,dz)]=t\sup_{v\in\v}\int_{\r0}|\phi(z)|v(dz)$. Moreover, by assumption that the support of $\phi$ has positive $\v$-capacity we know that this expectation must be also positive.
	
	However, by \eqref{eq_uniform_integrab_d} we have that
	\[
		\GE[\int_{\r0}|\phi(z)|L(t,dz)]\leq \GE[\int_{\r0}|\phi(z)|L(t,dz)\I_{O}]+\GE[\int_{\r0}|\phi(z)|L(t,dz)\I_{O^c}]\leq \epsilon.
	\]
	We get the contradiction hence the integral cannot be quasi-continuous on $\R_+$.
\end{proof}

It is trivial to see why there is a problem with non-quasi-continuity of a Poisson integral as a stochastic process: for every path it has "large" discontuitities at jump times. However smoothing it a bit by making "new" jumps "small", helps to obtain quasi-continuity without distorting the process to much, hence the stopping time technique might be still successfully applied to get the desired result.
\begin{tw}\label{tw_integral_qc_on_R_d}
	Fix a bounded function $\phi $ such that the support of $\phi$ lies in an open set $A$ with $c^{\v}(\partial A)=0$, $c^{\v}(A)>0$ and $0\notin\bar A$. Assume for each $t>0$ we can find a sequence of functions $f^t_n\colon \R_+\to \R_+$ such that
	\begin{enumerate}
		\item $f^t_n$ are continuous, non-increasing and $f^t_n\downarrow \I_{[0,t]}$,
		\item $f^t_n(u)=0$ for $u\geq t$, $f^t_n(u\vee 0)=1$ for $u\leq t-1/n$.
		\item  the stochastic process $Y^n$ defined as
		\[
			Y^n_t:=\int_0^t\int_{\r0}\phi(z)f^t_n(s)L(ds,dz):=\sum_{0<u\leq t}\phi(\Delta X_u)f^t_n(u)
		\]
		is quasi-continuous on $\R_{+}$ for all $n$.
	\end{enumerate}
	Then $\phi(\Delta X_{\tau^k_A})$ is q.c. for all $k$. Consequently,  $\phi$ is $\v$-q.c.
\end{tw}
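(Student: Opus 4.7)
The target is to prove quasi-continuity of $\phi(\Delta X_{\tau^k_A})$; the $\v$-quasi-continuity of $\phi$ then follows from the alternative form of Theorem \ref{tw_qc_of_phi_d}, Point~1 indicated in Remark \ref{rem_qc_phi_d}, which is available under just $c^{\v}(A)>0$. The idea is to reconstruct $\phi(\Delta X_{\tau^k_A})$ from the smoothed quasi-continuous process $Y^n$, evaluated between the consecutive stopping times $\tau^{k-1}_A$ and $\tau^k_A$.

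Fix $\epsilon>0$, $k\geq 1$ and $t>0$. By the quasi-continuity of every $Y^n$ on $\R_+$, of $\tau^{k-1}_A,\tau^k_A$ and $\Delta X_{\tau^k_A}$ (Proposition \ref{prop_tau_qc_d}), Ren's relative compactness of $\mathfrak{P}$ combined with Prohorov's theorem (to extract a compact $K\subset\Omega$ of large capacity), and the absence of jumps into $\partial A$ (Lemma \ref{lem_capacity_of_tau_d}), I assemble a compact set $F\subset\Omega$ with $c(F^c)<\epsilon$ on which all the $Y^n$ are jointly continuous in $(r,\omega)$, the stopping times $\tau^{k-1}_A,\tau^k_A$ and $\Delta X_{\tau^k_A}$ are continuous in $\omega$, and no jump of $X$ lies in $\partial A$. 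Moreover, since each $\P^\theta\in\mathfrak{P}$ gives $\tau^k_A$ an atomless distribution (it is the $k$-th jump time of a subordinate Poisson point process), one has $c(\{\tau^k_A=t\})=0$; outer regularity of $c$ then allows me to further shrink $F$ so that $\tau^k_A\neq t$ holds on it.

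Since $\operatorname{supp}\phi\subset A$ and no jump falls in $\partial A$ on $F$, the only jump of $X$ contributing to the Poisson sum inside $(\tau^{k-1}_A,\tau^k_A]$ is the one at $\tau^k_A$, so
\[
Y^n_{\tau^k_A}-Y^n_{\tau^{k-1}_A}=\phi(\Delta X_{\tau^k_A})\,f^t_n(\tau^k_A)\quad\text{on }F,
\]
and the left-hand side is continuous in $\omega\in F$. On $F\cap\{\tau^k_A>t\}$ we have $f^t_n(\tau^k_A)\equiv 0$, so $\phi(\Delta X_{\tau^k_A})\I_{\{\tau^k_A<t\}}\equiv 0$ there. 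The piece $F\cap\{\tau^k_A<t\}$ equals the increasing union of the closed sets $F_{n_0}:=F\cap\{\tau^k_A\leq t-1/n_0\}$; on each $F_{n_0}$ one has $f^t_{n_0}(\tau^k_A)=1$, and hence $\phi(\Delta X_{\tau^k_A})=Y^{n_0}_{\tau^k_A}-Y^{n_0}_{\tau^{k-1}_A}$ is continuous there. Continuity of $\tau^k_A$ on $F$ guarantees that any convergent sequence in $F\cap\{\tau^k_A<t\}$ is eventually trapped inside some $F_{n_0}$, yielding continuity of $\phi(\Delta X_{\tau^k_A})\I_{\{\tau^k_A<t\}}$ on $F$, i.e.\ its quasi-continuity.

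To promote this to quasi-continuity of $\phi(\Delta X_{\tau^k_A})$ itself, I repeat the construction with an exhausting sequence $t_j\uparrow\infty$, allocating capacity budget $\epsilon/2^j$ at step $j$, so that the identity above is simultaneously available for every $t_j$; this gives continuity at every $\omega_0$ with $\tau^k_A(\omega_0)<\infty$ by taking $t_j>\tau^k_A(\omega_0)$ and invoking the local argument. The principal obstacle will be the event $\{\tau^k_A=\infty\}$: when $\inf_{v\in\v}v(A)>0$ it is polar (as $c(\tau^k_A\geq t)\to 0$) and can be absorbed in $F^c$; in the general case $c^{\v}(A)>0$ a separate local argument is needed, using that $f^t_n(\tau^k_A)\equiv 0$ whenever $\tau^k_A\geq t$ to verify continuity at such $\omega_0$ under the convention $\phi(\Delta X_\infty):=0$. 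Once quasi-continuity of $\phi(\Delta X_{\tau^k_A})$ is secured, the Prohorov/compactness argument of the proof of Theorem \ref{tw_qc_of_phi_d}, Point~1, together with the estimate $c(\Delta X_{\tau^k_A}\in J^c)\geq c^{\v}(J^c\cap A)/c^{\v}(A)$ from Proposition \ref{prop_capacity_jump_in_B_d} (as in Remark \ref{rem_qc_phi_d}), exhibits a closed set $J\cup A^c\subset\R^d$ with $c^{\v}(J^c\cap A)$ arbitrarily small on which $\phi$ is continuous, which is the $\v$-quasi-continuity of $\phi$.
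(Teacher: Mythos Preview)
Your central identity
\[
Y^n_{\tau^k_A}-Y^n_{\tau^{k-1}_A}=\phi(\Delta X_{\tau^k_A})\,f^t_n(\tau^k_A)
\]
is wrong, and the error is structural, not cosmetic. The process $Y^n$ is defined by $Y^n_s=\sum_{0<u\leq s}\phi(\Delta X_u)f^{s}_n(u)$, so the weight function depends on the evaluation time $s$, not on some fixed $t$. Consequently $Y^n_{\tau^k_A}$ involves $f^{\tau^k_A}_n$ while $Y^n_{\tau^{k-1}_A}$ involves $f^{\tau^{k-1}_A}_n$; the difference is not a simple increment over $(\tau^{k-1}_A,\tau^k_A]$. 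In fact, by property~2 one has $f^{\tau^k_A}_n(\tau^k_A)=0$, so the jump $\phi(\Delta X_{\tau^k_A})$ does not even appear in $Y^n_{\tau^k_A}$, and it cannot be recovered from the difference you write down. Your subsequent argument (trapping sequences in $F_{n_0}$, exhausting with $t_j\uparrow\infty$) is built entirely on this identity and therefore collapses.

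The paper resolves this by shifting the index by one and restricting to a set where consecutive jump times are well separated: it considers
\[
Z^n:=\bigl(Y^n_{\tau^{k+1}_A}-Y^n_{\tau^{k}_A}\bigr)\I_{\{\tau^{k}_A\leq \tau^{k+1}_A-1/n\}\cap\{\tau^{k-1}_A\leq \tau^{k}_A-1/n\}}.
\]
On this event every earlier jump time $\tau^i_A$ ($i\leq k$) lies at least $1/n$ before $\tau^{k+1}_A$, so $f^{\tau^{k+1}_A}_n(\tau^i_A)=1$, and similarly for $Y^n_{\tau^k_A}$; only then does the difference telescope to $\phi(\Delta X_{\tau^k_A})$ times the indicator. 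Proposition~\ref{prop_qc_different_stop_times_d} (which you do not invoke) is precisely what guarantees quasi-continuity of these separation indicators, and the argument concludes by showing $Z^n\to\phi(\Delta X_{\tau^k_A})$ in $L^1_G(\Omega)$ via $c(\tau^{k}_A>\tau^{k+1}_A-1/n)\to 0$, rather than by a direct pointwise/compactness construction.
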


Before we prove the theorem we note that it is easy to check that for any $\phi\in C_{b}(\r0)$ with the support bounded away from $0$ we can easily find functions $f^t_n$ such that $Y^n$ is continuous on $[0,\infty[\times \Omega$. If $\phi$ is $\v$-q.c. and bounded with support bounded away from $0$ one can also check that the assumptions above are satisfied.
\begin{prop}
	Let $\phi$ be a bounded, $\v$-q.c. function such that the support of $\phi$ lies in an open set $A$ with $c^{\v}(\partial A)=0$, $c^{\v}(A)>0$ and $0\notin\bar A$. Then the process $Y^n_t$ defined in Theorem \ref{tw_integral_qc_on_R_d} is quasi-continuous on $\R_+$ for any family of functions $f_n^t$ satisfying properties 1-2. 
\end{prop}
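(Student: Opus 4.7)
\emph{(Sketch of approach.)} Given $\epsilon>0$, the plan is to construct an open set $O\subset\Omega$ with $c(O)<\epsilon$ such that $(t,\omega)\mapsto Y^n_t(\omega)$ is continuous on $\R_+\times O^c$. The idea is to transfer the $\v$-quasi-continuity of $\phi$ at the jump-size level into an exceptional path-space set via the Poisson random measure, and then to use standard Skorohod continuity of jumps on the good paths.

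First, for each $T\in\N$ I would use the $\v$-quasi-continuity of $\phi$ to choose an open set $U_T\subset\r0$ with $\phi$ continuous on $U_T^c$ and
\[
  c^{\v}(U_T)<\frac{\epsilon}{(T+1)\,2^T}.
\]
Define the path-space event
\[
  O_T:=\{\omega\in\Omega\colon L(T+1,U_T)(\omega)\geq 1\},
\]
consisting of paths having at least one jump of size in $U_T$ within $(0,T+1]$. Since $U_T$ is an open subset of $\r0$, the functional $\omega\mapsto L(T+1,U_T)(\omega)$ is lower semi-continuous in the Skorohod topology (a jump of the limit path in an open set persists in the approximating paths), so $O_T$ is open. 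Combining with Remark \ref{rem_extenstion_of_integral_d}, Point 3,
\[
  c(O_T)\leq\GE[L(T+1,U_T)]=(T+1)\,c^{\v}(U_T)<\epsilon\,2^{-T}.
\]
Setting $O:=\bigcup_{T=1}^{\infty}O_T$, the set $O$ is open with $c(O)<\epsilon$, and on $O^c$ every jump $\Delta\omega_u$ with $u\in(0,T+1]$ lies in $U_T^c$ -- that is, at a continuity point of $\phi$.

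It then remains to verify that $(t,\omega)\mapsto Y^n_t(\omega)$ is continuous on $\R_+\times O^c$. Fix $(t,\omega)$ and a sequence $(t_m,\omega_m)\to(t,\omega)$ with $\omega_m\in O^c$ (closed). Choose $T\in\N$ with $t+1<T$, so $t_m+1<T$ for $m$ large. Since $A$ is bounded away from $0$ and $\omega$ is c\`adl\`ag, there are only finitely many jump times $u_1,\ldots,u_K$ of $\omega$ in $[0,T]$ with $\Delta\omega(u_k)\in A$. Standard Skorohod convergence furnishes jump times $u_{k,m}$ of $\omega_m$ with $u_{k,m}\to u_k$ and $\Delta\omega_m(u_{k,m})\to\Delta\omega(u_k)$, while any further jump of $\omega_m$ in $[0,T]$ has size eventually outside $A$. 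Because $\phi$ vanishes outside $A$ and $f^{t_m}_n$ vanishes on $[t_m,\infty)$, only the $K$ tracked jumps contribute to $Y^n_{t_m}(\omega_m)$. Combining continuity of $\phi$ at each $\Delta\omega(u_k)\in U_T^c$ with the joint continuity of $(t,u)\mapsto f^t_n(u)$ implicit in properties 1--2, we obtain
\[
  Y^n_{t_m}(\omega_m)=\sum_{k=1}^K\phi(\Delta\omega_m(u_{k,m}))f^{t_m}_n(u_{k,m})\longrightarrow\sum_{k=1}^K\phi(\Delta\omega(u_k))f^t_n(u_k)=Y^n_t(\omega).
\]

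The main obstacles I foresee are: (i) the openness of $O_T$, which requires lower semi-continuity of $\omega\mapsto L(T+1,U_T)(\omega)$ on open sets $U_T\subset\r0$ in the Skorohod topology -- standard but needing some care if $\omega$ has a jump exactly at the boundary time $T+1$, which can be circumvented by slightly enlarging the horizon; and (ii) the bijection between the jumps of $\omega_m$ and $\omega$ falling in $A$, which ultimately rests on $A$ being bounded away from $0$ so that there are only finitely many such jumps in any compact time interval. The condition $c^{\v}(\partial A)=0$ does not enter directly, but the support condition on $\phi$ and the boundedness of $\phi$ are used to truncate the sum and to control uniform integrability via Proposition \ref{prop_continity_int_d}.
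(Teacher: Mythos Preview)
Your approach is essentially the same as the paper's: transfer the $\v$-quasi-continuity of $\phi$ to a path-space exceptional set via the Poisson random measure, bound its capacity by $\GE[L(\cdot,\cdot)]=t\cdot c^{\v}(\cdot)$, and verify Skorohod continuity on the complement; the paper first does this on $[0,T]$ and then takes a union over $T\in\N$, exactly as you do. One small correction: you should replace $U_T$ by $U_T\cap A$ in the definition of $O_T$ (as the paper does), since the Poisson count $L(T+1,U_T)$ and the identity $\GE[L(T+1,U_T)]=(T+1)\,c^{\v}(U_T)$ are only guaranteed when the jump-size set is bounded away from $0$; this costs nothing because $\phi$ vanishes outside $A$, and it also sidesteps your obstacle~(i) about the boundary time, since the paper simply asserts that $O_T^c$ is closed rather than arguing lower semi-continuity of the count.
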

\begin{proof}
	First we fix $T>0$ and we prove the quasi-continuity on $[0,T]$. Fix $\epsilon>0$ and an open subset $\tilde O_T\subset \r0$ such that $c^{\v}(\tilde{O}_T)<\epsilon/T$ and $\phi$ is continuous on $\tilde{O}_T^c$. Define $O_T$ as follows
	\[
		O_T:=\{\omega\in\Omega\colon \exists\, u\leq T\ \Delta \omega_u\in \tilde O_T\cap  A\}.
	\]
	Hence $O^c_T=\{\omega\in\Omega\colon \forall\, u\leq T\ \Delta \omega_u\in \tilde O^c_T\cup  A^c \}$ and it is a closed set. Moreover it is easy to see that $Y^n_t$ is continuous on $[0,T]\times O^c_T$ if $f_n^t$ satisfies the properties 1 and 2. Note also that $c(O_T)<\epsilon$ by exactly the same argument as in the proof of Theorem \ref{tw_qc_of_phi_d}, Point 2.
	Therefore, $Y^n$ is q.c. on $[0,T]$. But $T$ was arbitrary. So fix $\epsilon>0$ again and for each $T=n,\ n=1,2\ldots$ we can choose an open set $O_n$ with capasity $c(O_n)\leq \epsilon/2^n$ such that $Y^n$ is continuous on $[0,T]\times O_n^c$. By putting $O=\bigcup_{n=1}^{\infty}$ we immediately see that $Y^n$ is also continuous on $\R_{+}\times O^c$. Therefore $Y^n$ is q.c. on $\R_+$.
\end{proof}

\begin{proof}[Proof of Theorem \ref{tw_integral_qc_on_R_d}]
	We follow the idea presented in Remark  \ref{rem_idea_of_proof_qc_d}. We fix $k,n\in\N$ and $\epsilon>0$. By the quasi-continuity of $Y^n$ we may find an open set $O_1$ with $c(O_1)<\epsilon/2$ such that $Y^n$ is continuous on $[0,\infty[\times O^c_1$. By the properties of the set $A$ and Propositions \ref{prop_tau_qc_d} and \ref{prop_qc_different_stop_times_d} there exists an open set $O_2$ with capacity $c(O_2)<\epsilon/2$ such that $\tau^k_A$, $\tau^{k+1}_A$, $\I_{\{\tau_A^{k-1}\leq \tau_A^k-1/n\}}$ and $\I_{\{\tau_A^{k}\leq \tau_A^{k+1}-1/n\}}$ are continuous on $O^c_2$.
	
	Take $O=O_1\cup O_2$. Then we have that by the choice of $O$ and the definition of $Y^n$ that
	\begin{align*}
		Z^n:=&\left(Y^n_{\tau^{k+1}_A}-Y^n_{\tau^{k}_A}\right)\I_{\{\tau_A^{k}\leq \tau^{k+1}_A-1/n\}\cap \{\tau_A^{k-1}\leq \tau^k_A-1/n\}}\\
		&=\phi(\Delta X_{\tau^k_A})\I_{\{\tau_A^{k}\leq \tau^{k+1}_A-1/n\}\cap \{\tau_A^{k-1}\leq \tau^k_A-1/n\}}
	\end{align*}
	is continuous on $O$. Consequently, by boundedness of $\phi$ we get that $Z^n\in L^1_G(\Omega)$. We will show now that $Z^n$ converges to $\phi(\Delta X_{\tau^k_A})$ in $L^1_G(\Omega)$, hence $\phi(\Delta X_{\tau^k_A})\in L^1_G(\Omega)$ and, in particular, is quasi-continuous.
	
	\begin{align*}
		\GE[|Z_n-\phi(\Delta X_{\tau^k_A})|]&=\GE[\phi(|\Delta X_{\tau^k_A})||1-\I_{\{\tau_A^{k}\leq \tau^{k+1}_A-1/n\}\cap \{\tau_A^{k-1}\leq \tau^k_A-1/n\}}|]\\
		&\leq B\, c(\tau_A^{k}> \tau^{k+1}_A-1/n)+B\, c(\tau_A^{k-1}> \tau^k_A-1/n),
	\end{align*}
	where $B$ is a bound of $\phi$. We will deal with the first summand, as the second has exactly the same structure. 
	
	Take again the set $O_2$. Note that 
	\[
		c(\tau_A^{k}> \tau^{k+1}_A-1/n)\leq c(\{\tau_A^{k}-\tau^{k+1}_A\geq -1/n\}\cap O_2^c)+c(O_2)<c(\{\tau_A^{k}-\tau^{k+1}_A\geq -1/n\}\cap O_2^c)+\frac{\epsilon}{2}
	\]
	Both $\tau_A^{k}$ and $\tau^{k+1}_A$ are continuous on $O_2^c$ so $F_n:=(\{\tau_A^{k}-\tau^{k+1}_A\geq -1/n\}\cap O_2^c$ is a sequence of closed sets decreasing to $\emptyset$ (as $\tau^k_A<\tau^{k+1}_A$ by its definition). Hence by the relative compactness of $\mathfrak{P}$ (see \cite{Ren}) and Theorem 12 in \cite{Denis_function_spaces} we may find $N>0$ s.t. for all $n>N$ $c(F_n)<\epsilon/2$. Consequently $Z_n\to \phi(\Delta X_{\tau^k_A})$ in $L^1_G(\Omega)$.
\end{proof}

\subsection{Characterization of the $\v$-quasi continuity}\label{ssec_v_qc_d}
As it was seen in the Subsection \ref{ssec_regular_stop_times_d}, the $\v$-quasi continuity (under some mild conditions) determine the quasi-continuity of the integral. In this subsection we will give the quick criterion for $\v$-quasi-continuity. 

\begin{prop}\label{prop_criterion_phi_v_qc_d}
	Let $\phi\in L^0(\r0)$ and let $A$ be the set of all discontinuity points of $\phi$. Then if $\v$ is relatively compact and $c^{\v}(\bar A)=0$, then $\phi$ is $\v$-q.c. On the other hand, if $\phi$ is $\v$-q.c. then \[\inf\{c^{\v}(O)\colon O\textrm{ - an open subset of }\r0,\ A\subset \bar O\}=0.\]
\end{prop}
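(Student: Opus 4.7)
The two implications call for quite different techniques, so my plan is to treat them separately, handling the converse direction first as a warm-up.

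For the converse direction (assuming $\phi$ is $\v$-quasi-continuous), the plan is direct. I fix $\epsilon>0$ and extract from the definition of $\v$-quasi-continuity an open set $O\subset \r0$ with $c^{\v}(O)<\epsilon$ such that $\phi|_{O^c}$ is continuous. I then claim that $A\subset \bar O$, which immediately exhibits $O$ as a witness of $\inf\{c^{\v}(O')\colon O'\textrm{ open},\ A\subset \bar{O'}\}<\epsilon$. To establish the claim I argue by contradiction: if some $x\in A$ satisfied $x\notin \bar O$, then $(\bar O)^c$ would be an open neighbourhood of $x$ contained in $O^c$, so every sequence in $\r0$ converging to $x$ would eventually lie in $O^c$; continuity of the restriction $\phi|_{O^c}$ at $x$ would then upgrade to ordinary continuity of $\phi$ at $x$, contradicting $x\in A$.

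For the direct implication, my plan is to produce, for each $\epsilon>0$, an open set $O\subset \r0$ containing $\bar A$ with $c^{\v}(O)<\epsilon$; since $\phi$ is continuous on $\r0\setminus A\supset (\bar A)^c\supset O^c$, the restriction $\phi|_{O^c}$ is automatically continuous and $\v$-quasi-continuity of $\phi$ follows. The task thus reduces to outer regularity of $c^{\v}$ at the closed set $\bar A$, namely upgrading $c^{\v}(\bar A)=0$ to small capacities on open neighbourhoods of $\bar A$. My approach will be a subsequence argument built on the relative compactness of $\v$. I introduce the shrinking open tubes $U_n:=\{z\in\r0\colon d(z,\bar A)<1/n\}$, whose closures $\bar U_n$ decrease to $\bar A$; if $c^{\v}(U_n)$ did not tend to $0$, I would pick $v_n\in\v$ with $v_n(U_n)\geq \epsilon/2$, pass along a subsequence to a vague limit $v^{*}$ in the closure of $\v$, and apply a Portmanteau-type upper semicontinuity of $v\mapsto v(K)$ on compact $K\subset\r0$ to obtain $v^{*}(\bar U_m)\geq \epsilon/2$ for every fixed $m$; letting $m\to\infty$ would give $v^{*}(\bar A)\geq \epsilon/2$, contradicting $c^{\v}(\bar A)=0$.

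The main obstacle will be executing this Portmanteau step cleanly. The elements of $\v$ are \levy measures that may have infinite mass near $0$ and non-negligible mass near infinity, so upper semicontinuity of $v\mapsto v(K)$ is only available for compact $K\subset\r0$, i.e.\ for $K$ bounded away from both $0$ and $\infty$. My plan for dealing with this is to first truncate $\bar A$ to a compact annulus $\{\eta\leq |z|\leq 1/\eta\}$ and to control the two residual pieces uniformly in $v\in\v$: the bound $\sup_{v\in\v}\int_{\{0<|z|<1\}}|z|^qv(dz)<\infty$ from Assumption \ref{ass1_d} yields $\sup_{v\in\v} v(\{\eta\leq |z|<1\})\leq C\eta^{-q}$ and hence uniform tightness away from $0$, while the uniform first-moment bound in \eqref{eq_property_of_u_d} handles the tail at infinity through a Chebyshev estimate. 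With both tails uniformly controlled, the compactness argument runs on a genuinely compact region of $\r0$ and delivers the desired outer regularity.
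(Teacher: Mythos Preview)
Your converse argument is the paper's, phrased more directly: the paper splits into two cases according to whether infinitely many $x_n$ lie in $O^c$ or in $O$, arriving at $A\subset\bar O$ after eliminating the first case; your contrapositive ($x\notin\bar O\Rightarrow\phi$ continuous at $x$, hence $x\notin A$) collapses both cases into one line.

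For the direct implication the paper is much terser than you: it simply invokes Lemma~3.4 of \cite{Song_hitting} to conclude that $c^{\v}$ is outer regular at the closed polar set $\bar A$, remarking that the lemma's proof needs only relative compactness. You instead rebuild this outer-regularity step from scratch via a subsequence/Portmanteau argument. Your derivation of $v^{*}(\bar U_m)\geq\epsilon/2$ and then $v^{*}(\bar A)\geq\epsilon/2$ is correct, and the truncation to a compact annulus (so that $\bar U_m$ is genuinely compact in $\r0$ and $v^{*}$ is finite there) is a careful point the paper skips by citing the lemma. The gap is in the very last line: you claim that $v^{*}(\bar A)\geq\epsilon/2$ contradicts $c^{\v}(\bar A)=0$, but $c^{\v}(\bar A)=\sup_{v\in\v}v(\bar A)$ only controls $v(\bar A)$ for $v\in\v$, whereas your limit point $v^{*}$ lies merely in the weak closure $\bar\v$. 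This is not a formality. Take $\v=\{\delta_x: x\in(1,2)\}$, which is relatively weakly compact with closure $\{\delta_x: x\in[1,2]\}$, and $\bar A=\{2\}$: then $c^{\v}(\{2\})=0$, yet every open $O\ni 2$ satisfies $c^{\v}(O)=1$, so outer regularity fails and $\phi=\I_{[2,\infty)}$ is not $\v$-q.c. The clean fix is to assume $\v$ weakly \emph{closed} (hence compact), so that $v^{*}\in\v$ and your contradiction goes through; this is how such lemmas are typically proved and is implicit in the paper's applications (where one works with $\v$ restricted to a set bounded away from $0$, taken as a closed family).
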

\begin{proof}
	Assume that $c^{\v}(\bar A)=0$. Then by Lemma 3.4 in \cite{Song_hitting} we have that
	\[
		0=\inf\{c^{\v}(O)\colon \bar A\subset O, O\textrm{ is open}\},
	\]
	so for each $\epsilon>0$ we can choose an open set containing $\bar A$ with $\v$-capacity less than $\epsilon$. Of course $\phi$ is continuous on $O^c$, hence it's $\v$-q.c. Note that Lemma 3.4. is formulated for the case of a capacity normed by $1$. However the proof does not depend on this property and only the relative compactness is crucial for it. 
	
	To prove the second assertion of the proposition, take $\epsilon>0$. By the $\v$-quasi-continuity there exists an open set $O$ such that $c^{\v}(O)<\epsilon$ and $\phi$ is continuous on $O^c$.
	
	Take any $x\in A$. By the definition of $A$ there exists a sequence $x_n\to x$ s.t. $\phi(x_n)$ doesn't converge to $\phi(x)$. We have two cases. 
	\begin{enumerate}
		\item Infinitely many $x_n$ belong to $O^c$. W.l.o.g. we can assume that all $x_n\in O^c$. By the closedness of $O^c$ we deduce that also $x\in O^c$. But this contradicts the continuity of $\phi$ on $O^c$.
		\item Infinitely many $x_n$ belong to $O$. Again w.l.o.g. we can assume that all $x_n\in O$. Hence $x\in \bar O$. But $x\in A$ was an arbitrary point, hence $A\subset\bar O$ and we have
		\[
			\inf\{c^{\v}(O)\colon \bar A\subset O, O\textrm{ is open}\}<\epsilon.\qedhere
		\]
	\end{enumerate}
\end{proof}
\subsection{Example of a Poisson integral which is not quasi-continuous}

In this subsection we consider a $G$-\levy process $X$ associated with the following set $\u:=\{\delta_x\colon x\in[1,2]\}\times\{0\}\times\{0\}$. We show directly from the definition that a very simple integral $Y_T:=\int_{\r0}\I_{\{1\}}(z)N(T,dz)$ is not quasi-continuous. Note that it is easy to check using Proposition \ref{prop_criterion_phi_v_qc_d} that $\phi=\I_{\{1\}}$ is not $\v$-q.c., just as it is predicted by the theory in Subsection \ref{ssec_regular_stop_times_d}.

\begin{prop}
	The random variable $Y_T$ is not q.c. for any $T>0$.
\end{prop}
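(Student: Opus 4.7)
The plan is to argue by contradiction by testing $Y_T$ against the one-parameter family of laws $\{\P^{\delta_x}\}_{x \in [1,2]} \subset \mathfrak{P}$ supplied by Theorem \ref{tw_rep_sub_lin_d} (taking constant integrands $\theta \equiv (g_{\delta_x},0,0)$). Under each $\P^{\delta_x}$ the canonical process $X$ is a classical compound Poisson process with \levy measure $\delta_x$, so all its jumps are of size exactly $x$; hence $Y_T = 0$ $\P^{\delta_x}$-a.s.\ for every $x \in (1,2]$, whereas under $\P^{\delta_1}$ the variable $Y_T$ is Poisson-distributed with parameter $T$. To avoid unboundedness I would first replace $Y_T$ by $f := Y_T \wedge 1 = \I_{\{Y_T \geq 1\}}$, which inherits quasi-continuity since $r \mapsto r \wedge 1$ is continuous. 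The key asymmetry to exploit is then $\E^{\P^{\delta_x}}[f] = 0$ for $x \in (1,2]$ while $\E^{\P^{\delta_1}}[f] = 1 - e^{-T} > 0$.

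Assuming $f$ is q.c., fix $\eta > 0$ (to be chosen later) and pick an open set $O \subset \Omega$ with $c(O) < \eta$ and $f|_{O^c}$ continuous. Because the Skorohod space $\d0$ is Polish, hence normal, and $O^c$ is closed, Tietze's extension theorem produces a continuous $\tilde f \colon \Omega \to [0,1]$ agreeing with $f$ on $O^c$. The pointwise bound $|f - \tilde f| \leq \I_O$ yields $\GE[|f - \tilde f|] \leq c(O) < \eta$, so
\[
|\E^{\P}[f] - \E^{\P}[\tilde f]| < \eta \quad \text{for every } \P \in \mathfrak{P}.
\]
In particular $\E^{\P^{\delta_x}}[\tilde f] < \eta$ for all $x \in (1,2]$ and $\E^{\P^{\delta_1}}[\tilde f] > (1 - e^{-T}) - \eta$.

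To close the argument I would invoke the weak convergence $\P^{\delta_x} \Rightarrow \P^{\delta_1}$ as $x \downarrow 1$ in the Skorohod topology. This is transparent from the coupling on $(\tilde \Omega,\G,\P_0)$: the processes $B^{0,\theta_x}$ associated with $\theta_x \equiv (g_{\delta_x},0,0)$ share the same jump times while their jump magnitudes tend to $1$, so their \cadlag paths converge uniformly on every $[0,T]$ and therefore in Skorohod distance. Since $\tilde f$ is bounded and continuous, $\E^{\P^{\delta_x}}[\tilde f] \to \E^{\P^{\delta_1}}[\tilde f]$, giving
\[
\eta \geq \limsup_{x \downarrow 1}\, \E^{\P^{\delta_x}}[\tilde f] = \E^{\P^{\delta_1}}[\tilde f] \geq (1 - e^{-T}) - \eta.
\]
Thus $2\eta \geq 1 - e^{-T}$, and choosing $\eta < (1 - e^{-T})/2$ (possible since $T > 0$) yields the desired contradiction. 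The main delicate step will be justifying the weak-convergence claim inside the paper's strong-formulation framework rather than by appealing to general compound-Poisson theory, but the explicit coupling above should make this essentially routine.
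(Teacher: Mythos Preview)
Your argument is correct and genuinely different from the paper's. The paper works directly with paths: it fixes a small open set $O$ on whose complement $Y_T$ is continuous, considers the single-jump trajectories $\omega^{t,x}=x\,\I_{]t,T]}$, observes that $\omega^{t_n,x_n}\to\omega^{t,1}$ while $Y_T(\omega^{t_n,x_n})=0\neq 1=Y_T(\omega^{t,1})$, and then uses a compactness/covering argument to force a set of the form $\{\omega^{u,x}:u\in[t_1,t_2],\,x\in]1,A]\}$ into $O$, contradicting the capacity bound via the Poisson law $\P^{\delta_x}$.

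You instead linearise the problem: pass to the bounded $f=Y_T\wedge1$, extend $f|_{O^c}$ continuously to all of $\Omega$ by Tietze, and then let the weak convergence $\P^{\delta_x}\Rightarrow\P^{\delta_1}$ (obtained from the elementary coupling $X^x=x\cdot M$ with a common Poisson process $M$) do the work on the continuous extension. This replaces the explicit covering lemma by a single limit of expectations. Your route is cleaner and would adapt with no change to any bounded integrand $\phi$ whose law under $\P^{\delta_1}$ differs from its pointwise limit along $\P^{\delta_x}$; the paper's route is more concrete and exhibits precisely \emph{which} paths obstruct quasi-continuity. The one point you flag as delicate --- that the coupling can be realised inside the paper's framework --- is harmless: weak convergence is a property of the measures $\P^{\delta_x}$ themselves, so any coupling establishing it suffices, and the one you give (or, equivalently, taking $g_{\delta_x}\equiv x$ on a fixed set of $\mu$-mass one) does the job.
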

\begin{proof}[Sketch of the proof]
		We fix $T>0$. For simplicity we take $\Omega=\mathbb D_0 ([0,T],\R^d)$ and we will drop the subscript $T$ in $Y_T$. Assume the contrary, i.e. that $Y$ is q.c. Fix $0<t_1<t_2<T$ and take $0<\epsilon<\Q(M([t_1,t_2],1)=1)$, where $M$ is a Poisson random measure associated with a $\Q$-Poisson process with parameter $1$. By the assumed quasi-continuity of  $Y$ we can choose a an open set $O$ with $c(O)<\epsilon$ s.t. $Y|_{O^c}$ is continuous.
		
	Introduce the following family of closed sets for $x\in[1,2]$
	\[
		\Omega^x:=\{\omega\in\Omega\colon \Delta\omega_s\in\{0,x\}\ \forall\, s\in[0,T]\}.
	\]
	Of course we have that $c(\Omega^x)=1$ for all $x\in[1,2]$. We can have two cases:
		\begin{enumerate}
			\item $O^c\cap \Omega^1=\emptyset$. Then $O\supset\Omega^1$ and consequently $1>\epsilon>c(O)\geq 1$. Contradiction.
			\item $O^c\cap \Omega^1\neq \emptyset$. Then w.l.o.g. we may assume that $O^c\supset \Omega^1$ as $Y$ is continuous on $\Omega^1$, which is a closed set. 
			For $t\in]0,T[$ and $x\in[1,2]$ fine the following paths:
			\[
				\omega^{t,x}:=0\cdot\I_{[0,t]}+x\cdot\I_{]t,T]}.
			\]
			Note that if $x_n\downarrow 1$ and $t_n\to t$ then $\omega^{t_n,x_n}\to \omega^{t,x}$, but $Y(\omega^{t_n,x_n})=0$ for all $n$ and $Y(\omega^{t,x})=1$. Hence for each $t\in]0,T[$ there exist constants $0\leq s^t<t<S^t\leq T$ and $1<A^t\leq 2$ s.t. for all $u\in]s^t,S^t[$ and $x\in]1,A^t]$ we have that $\omega^{u,x}\notin O^c$ (as $Y$ is continuous on $O^c$). Define $I^t:=]s^t,S^t[$. Then it is obvious that the family $\{I^t\colon t\in]0,T[\}$ constitutes an open covering of a comapct interval $[t^1,t^2]$ which was introduced earlier. Hence, we can choose the finite subcovering $\{I^t\colon t\in\{u_1,\ldots,u_n\}\}$. We also can define then $A=\min_{0\leq i\leq n}\, A^{u_i}$. Note that $A>1$. Consequently we have
			\[
				\{\omega^{u,x}\colon u\in[t^1,t^2],\ x\in]1,A]\}\subset O.
			\]
			But we also have the following
			\begin{align*}
				\epsilon>c(O)&\geq c(\{\omega^{u,x}\colon u\in[t^1,t^2],\ x\in]1,A]\})\geq \Q(M([t_1,t_2],1)=1)>\epsilon.
			\end{align*}
			The third inequality is the consequence of the fact that for each $x\in]1,A]$ there is a $\P^x\in\mathfrak{P}$ such that the canonical process under $\P^x$ is a standard Poisson process multiplied by $x$ and $\P^x(\{\omega^{u,x}\colon u\in[t^1,t^2],\ x\in]1,A]\})=\Q(M([t_1,t_2],1)=1)$.
		\end{enumerate}
		In both cases we obtained the contradiction, hence $Y$ cannot be q.c.
\end{proof}

\section{Applications}\label{sec_applications_d}

In this section we will apply the results of the previous sections to the problem of the decomposition of $G$-\levy processes.  We will prove that we can require the jump part to be a $G$-martingale in $L^1_G(\Omega)$, but we cannot make the jump part a symmetric $G$-martingale (unless we extend the canonical space). 

First, we have the following easy corollary of Corollary \ref{cor_int_qc_L1c_d} and Proposition \ref{prop_criterion_phi_v_qc_d}.
\begin{cor}\label{cor_integral_qc_with_A_d}
	Let $A\subset \r0$ be such that $0\notin \bar A$ and $c^{\v}(\partial A)=0$. Let $\phi$ be a continuous function with linear growth. Then under Assumption \ref{ass_uniform_integ_d} we have that $\int_{A}\phi(z)L(t,dz)$ is in $L^1_G(\Omega_t)$ for all $t>0$. In particular, the integral $\int_A z L(t,dz)\in L^1_G(\Omega_t)$.
\end{cor}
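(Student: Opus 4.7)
The plan is to reduce the statement to Corollary~\ref{cor_int_qc_L1c_d} by verifying that $\phi \cdot \I_A \in \L^1_c(\r0, \v)$. By Proposition~\ref{prop_characterization_spaces_v_d}, this amounts to checking two things: (a) $|\phi\I_A|$ belongs to $\L^1(\r0,\v)$ and is both $\v$-tight and $\v$-uniformly integrable; (b) $\phi\I_A$ admits a $\v$-q.c. version.

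For (a), the crucial observation is that $0\notin\bar A$, so there exists $r>0$ with $A\subset\{|z|\geq r\}$. Together with $|\phi(z)|\leq C(1+|z|)$, I split $\sup_{v\in\v}\int_A|\phi(z)|v(dz)$ across $\{r\leq|z|<1\}$ and $\{|z|\geq 1\}$: the first region is handled by \eqref{eq_property_of_u_d} (which gives uniform bounds on $\int|z|v(dz)$ away from the origin and on $v(A)$), while on the second region $|z|\leq|z|^p$ since $p>1$, so Assumption~\ref{ass_uniform_integ_d} yields the bound. For $\v$-tightness, I take the compact set $K_R:=\bar A\cap\{r\leq|z|\leq R\}\subset\bar A\cap\r0$ and estimate $\sup_v\int_{K_R^c}|\phi\I_A|\,dv\leq C\sup_v\int_{\{|z|>R\}}|z|v(dz)$; bounding $|z|\leq|z|^p/R^{p-1}$ there, the tail decays like $R^{1-p}\to 0$. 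For $\v$-uniform integrability, I observe that for $n$ large enough $\{|\phi\I_A|>n\}\cap A\subset\{|z|>n/C-1\}\cap A$, and the same tail estimate applies.

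For (b), since $\phi$ is continuous on $\R^d$, the set of discontinuity points of $\phi\I_A$ is contained in $\partial A$; by hypothesis $c^{\v}(\partial A)=0$ and $\partial A$ is already closed, so Proposition~\ref{prop_criterion_phi_v_qc_d} furnishes the desired $\v$-q.c. version. (The relative compactness of $\v$ required by that proposition follows from the fact that, restricted to any region bounded away from $0$, $\v$ is a tight family of finite measures, as noted for instance in Ren's paper \cite{Ren} and in the proof of Step~1 of Proposition~\ref{prop_characterization_spaces_v_d}.)

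Combining (a) and (b) gives $\phi\I_A\in\L^1_c(\r0,\v)$, and Corollary~\ref{cor_int_qc_L1c_d} immediately yields $\int_A\phi(z)\,L(t,dz)\in L^1_G(\Omega_t)$. The in particular assertion follows by choosing $\phi(z)=z$, which is continuous with linear growth. The only mildly delicate point is the combination of Assumption~\ref{ass_uniform_integ_d} with \eqref{eq_property_of_u_d} to get tightness and uniform integrability of $|\phi\I_A|^1$ on $A$; the rest is a straightforward chaining of the earlier structural results.
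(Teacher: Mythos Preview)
Your proposal is correct and follows essentially the same route as the paper: reduce to Corollary~\ref{cor_int_qc_L1c_d} by showing $\phi\I_A\in\L^1_c(\r0,\v)$ via the characterization in Proposition~\ref{prop_characterization_spaces_v_d}, obtaining $\v$-quasi-continuity from $c^{\v}(\partial A)=0$ through Proposition~\ref{prop_criterion_phi_v_qc_d} (after restricting $\v$ to a shell $B(0,\epsilon)^c$ to secure relative compactness), and deriving the tightness/uniform-integrability from the linear growth of $\phi$ together with Assumption~\ref{ass_uniform_integ_d}. The paper's proof is terser about the tail estimates but otherwise identical in structure.
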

\begin{proof}
	It is sufficient to prove that $\I_A\phi\in \L^1_c(A,\v)$. We use the characterization of functions in $\L^1_c(A,\v)$ (compare with Remark \ref{rem_cv_capacity_d} and Proposition \ref{prop_characterization_spaces_v_d}). By the assumption on $A$ we know that there exists $\epsilon>0$ such that $\bar A\subset B(0,\epsilon)^c$. We also know that $\v$ restricted to $B(0,\epsilon)^c$ is relatively compact (compare with \cite{Peng_levy}, p. 14), hence the $c^{\v|_{B(0,\epsilon)^c}}(\partial A)=0$ and we can prove the $\v|_{B(0,\epsilon)^c}$-q.c. of $\I_A\phi$, which then we easily extend to $\v$-quasi-continuity. The "uniform integrability condition" might be easily obtained via the the linear growth and the fact that $\sup_{v\in\v}\int_{\{|z|\geq1\}}|z|^pv(dz)<\infty$ for some $p>1$. 
\end{proof}
Very similarly we get the following theorem.
\begin{tw}\label{tw_decomposition_in_L1G_d}
	Let $X$ be a $G$-\levy process defined on the canonical sublinear expectation space $(\Omega,L^1_G(\Omega),\GE[.]),\ \Omega=\d0$, satisfying Assumption \ref{ass_uniform_integ_d}. 
	\begin{enumerate}
	\item Then the decomposition $X_t=X^c_t+X^d_t$ as in Point 4 of Definition \ref{def_levy_d} might be taken on the same sublinear expectation space with $X_t^c:=X_t-\int_{\r0}zL(t,dz)$ and $X^d_t:=\int_{\r0}zL(t,dz)$ and both processes  belong to $L^1_G(\Omega)$ for each $t$. 
	\item We may also require the discontinuous part to be a $G$-martingale without losing the property that it belongs to $L^1_G(\Omega)$. We simply take $X^d_t:=\int_{\r0}zL(t,dz)-t\sup_{v\in\v}\int_{\r0}zv(dz)$. However, such defined discontinuous part is not a symmetric $G$-martingale, unless $\v=\{v\}$. 
	\item If there exist disjoint sets $A_1,\ldots,A_n\subset \r0$ such that $0\notin \bar A_i$  and $c^{\v}(\partial A_i)=0$ for each $i=1,\ldots,n$ then the following process \[t\mapsto(X_t-\sum_{i=1}^n\int_{A_i}zL(t,dz),\int_{A_1}zL(t,dz),\ldots,\int_{A_n}zL(t,dz))\] is a $\tilde G^{A_1,\ldots,A_n}$-\levy process on sublinear expectation space $(\Omega,L^1_G(\Omega),\GE[.])$, with $\Omega=\d0$, where $\tilde G^{{A_1,\ldots,A_n}}$ is a non-local operator associated with set $\tilde\u$ via \levykhintchine formula in Theorem \ref{tw_levykhintchine_d} where
	\[
	 \tilde\u:=\{(v|_{\bigcup_{i=1}^n A_i^c}\otimes v|_{A_1}\otimes\ldots\otimes v|_{A_n}, (p,0,\ldots,0),(q,0,\ldots,0))\colon (v,p,q)\in\u\}.
	\]
	\item In a similar manner we may compensate each of the discontinuous components of the $\tilde G^{A_1,\ldots,A_n}$-\levy process defined in Point 3.
	\end{enumerate}
\end{tw}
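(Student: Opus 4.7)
My plan is to reduce every assertion to two tools already in place: the inclusion $\L^1_c(\r0,\v)\subset L^1_G$ encoded by Corollary \ref{cor_int_qc_L1c_d} together with the characterization of $\L^1_c$ from Proposition \ref{prop_characterization_spaces_v_d}, and the \itolevy representation of $\GE$ from Theorem \ref{tw_rep_sub_lin_d}.

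For Point 1, I would first show that the identity map $z\mapsto z$ lies in $\L^1_c(\r0,\v)$. Continuity gives $\v$-quasi-continuity for free, so only $\v$-tightness and $\v$-uniform integrability of $|z|$ need be checked. Splitting at the small- and large-jump scales, Assumption \ref{ass1_d} yields
\[
    \sup_{v\in\v}\int_{\{|z|<\delta\}}|z|\,v(dz)\leq \delta^{1-q}\sup_{v\in\v}\int_{\{|z|<1\}}|z|^q v(dz)\xrightarrow{\delta\downarrow 0}0,
\]
and Assumption \ref{ass_uniform_integ_d} yields
\[
    \sup_{v\in\v}\int_{\{|z|>M\}}|z|\,v(dz)\leq M^{1-p}\sup_{v\in\v}\int_{\{|z|\geq 1\}}|z|^p v(dz)\xrightarrow{M\uparrow\infty}0,
\]
the latter bound also delivering $\v$-uniform integrability. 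Corollary \ref{cor_int_qc_L1c_d} then puts $X^d_t=\int_{\r0}z\,L(t,dz)$ in $L^1_G(\Omega_t)$, whence $X^c_t=X_t-X^d_t\in L^1_G(\Omega_t)$ as well, the canonical coordinate $X_t$ being in $L^1_G(\Omega_t)$ via a standard truncation argument using the $p$-th moment bound from Assumption \ref{ass_uniform_integ_d}. The distributional content of the decomposition is exactly Corollary \ref{cor_decomposition_x_d}.

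For Point 2, I set $\tilde X^d_t:=X^d_t-t\sup_{v\in\v}\int_{\r0}z\,v(dz)$; since the subtracted term is deterministic, membership in $L^1_G$ is inherited from Point 1. The $G$-martingale property follows from the independent and stationary increments of $X^d$ (inherited from those of $X$) combined with the identity $\GE[\int_s^t\int_{\r0}z\,L(du,dz)]=(t-s)\sup_{v\in\v}\int_{\r0}z\,v(dz)$ of Remark \ref{rem_extenstion_of_integral_d}: plugging this into the tower procedure defining the conditional $G$-expectation gives $\GE[\tilde X^d_t-\tilde X^d_s\mid\F_s]=0$. For the failure of symmetry, the same representation yields
\[
    \GE[-\tilde X^d_t]=t\Bigl(\sup_{v\in\v}\int_{\r0}z\,v(dz)-\inf_{v\in\v}\int_{\r0}z\,v(dz)\Bigr),
\]
which is strictly positive as soon as $v\mapsto\int z\,v(dz)$ is non-constant on $\v$ — in particular whenever $\v$ is genuinely a family rather than a singleton.

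Points 3 and 4 are the joint-law versions of Points 1 and 2. For Point 3 I would mimic the proof sketched just before Corollary \ref{cor_decomposition_x_d}, but for the $(n+1)d$-dimensional vector process; the pairwise disjointness of the $A_i$ together with $c^{\v}(\partial A_i)=0$ guarantees, via Lemma \ref{lem_capacity_of_tau_d}, that the boundary events are polar under every $\P^\theta\in\mathfrak P$. Evaluating $\GE[\psi(\textrm{finitely many vector increments})]$ through Theorem \ref{tw_rep_sub_lin_d} then splits each jump of the underlying \itolevy integral according to which of $\bigcup_i A_i^c,A_1,\ldots,A_n$ it lies in, identifying the sublinear expectation of the vector process with the one characterized by $\tilde\u$; independence and stationarity of the increments follow by the same tower argument used in the single-set case. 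Point 4 is then obtained by applying the Point 2 compensation coordinatewise, using that each $\int_{A_i}z\,L(t,dz)$ lies in $L^1_G$ by Corollary \ref{cor_integral_qc_with_A_d}. The main technical obstacle I anticipate is neither Point 1 nor the symmetry calculation but rather the bookkeeping in Point 3 to match the $\mathfrak P$-supremum to the declared $\tilde\u$ — a patient passage through the \itolevy representation rather than a genuinely new idea.
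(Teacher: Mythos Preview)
Your approach matches the paper's: the theorem is stated immediately after Corollary \ref{cor_integral_qc_with_A_d} with only the remark ``Very similarly we get the following theorem,'' so the intended argument is precisely the reduction you outline --- verify $z\mapsto z$ (respectively $z\I_{A_i}$) lies in $\L^1_c(\r0,\v)$ via Proposition \ref{prop_characterization_spaces_v_d}, invoke Corollary \ref{cor_int_qc_L1c_d}, and read off the distributional identities from the \itolevy representation as in the proof preceding Corollary \ref{cor_decomposition_x_d}.

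There is one genuine slip, in Point 2. Your computation
\[
\GE[-\tilde X^d_t]=t\Bigl(\sup_{v\in\v}\int_{\r0}z\,v(dz)-\inf_{v\in\v}\int_{\r0}z\,v(dz)\Bigr)
\]
is correct, but the clause ``in particular whenever $\v$ is genuinely a family rather than a singleton'' does not follow: two distinct \levy measures can share the same first moment (take e.g.\ $\delta_1+\delta_{-1}$ and $\delta_2+\delta_{-2}$), in which case the right-hand side vanishes and your mean computation says nothing. What you have actually proved is the weaker statement that $-\tilde X^d$ fails to be a $G$-martingale whenever $v\mapsto\int_{\r0}z\,v(dz)$ is non-constant on $\v$. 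To reach the stated conclusion ``unless $\v=\{v\}$'' you would need to test against nonlinear functionals rather than just the identity --- or else the theorem as written overstates the case, which is plausible given that Section \ref{sec_compensation_d} makes the same claim with the same one-line justification.
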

Lastly, we show that we cannot compensate the discontinuous part of $X$ with a factor which would make it a symmetric $G$-martingale without extending the space.
\begin{tw}
	Let $X$ be a $G$-\levy process with finite activity defined on the canonical sublinear expectation space $(\Omega,L^1_G(\Omega),\GE[.]),\ \Omega=\d0$. Assume that set $\u$ is of the following form
\[
	\u=\v\times\{0\}\times\q.
\]
and that there exists a measure $\pi$ on $\B(\r0)$ such that each $v\in\v$ is equivalent to $\pi$ and we have the following bounds $0<\underline{c}\leq\bar c<\infty$ for all $B\in\B(\r0)$
	\[
		\underline{c} \pi(B)\leq v(B)\leq \bar c\pi(B).
	\]
Assume also that there exists $p>2$ s.t.
\begin{equation}\label{eq_moment_ass_p_d}
	\sup_{v\in\v}\int_{\r0}|z|^pv(dz)<\infty.
\end{equation}

 Define $X^d_t:=\int_{\r0}zL(t,dz)$. If there exists a process with finite variation $Y$ such that $Y_t\in L^q_G(\Omega_t)$ some $q>2$ and $\tilde X^d_t:= X^d_t-Y_t$ is a symmetric $G$-martingale then $\v=\{v\}$.
\end{tw}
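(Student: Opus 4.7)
The plan leverages the fact that a symmetric $G$-martingale is a classical $\P$-martingale under every $\P\in\mathfrak{P}$, and then uses Doob--Meyer uniqueness under the deterministic controls parametrized by $\v$ to pin down the structure of $Y$.

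First, from $\GE[\tilde X^d_t|\F_s]=\tilde X^d_s$ and $\GE[-\tilde X^d_t|\F_s]=-\tilde X^d_s$ combined with $\E^{\P}[\cdot|\F_s]\le\GE[\cdot|\F_s]$, I deduce that $\tilde X^d$ is a classical $\P$-martingale for every $\P\in\mathfrak{P}$. Via Theorem \ref{tw_rep_sub_lin_d}, I restrict to the deterministic controls $\theta^v$ associated to $g_v$: under $\P^{\theta^v}$ the canonical process $X^d$ is a classical compound-Poisson process with Lévy measure $v$, whose canonical $\P^{\theta^v}$-Doob--Meyer decomposition is $X^d_t=M^v_t+t c(v)$ with continuous deterministic compensator $c(v):=\int z\,v(dz)$ and $M^v$ a purely discontinuous martingale. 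Writing $Y=Y^c+Y^d$ with $Y^c$ continuous FV and $Y^d$ pure-jump FV, the identity $X^d=\tilde X^d+Y^c+Y^d$ paired with Doob--Meyer uniqueness under each $\P^{\theta^v}$ forces $Y^c_t+A^{Y^d,v}_t=tc(v)$ on a set of full $\P^{\theta^v}$-measure, where $A^{Y^d,v}$ denotes the $\P^{\theta^v}$-compensator of the pure-jump FV process $Y^d$.

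The bounds $\underline c\,\pi\le v\le \bar c\,\pi$ together with finite activity imply (via the classical Girsanov theorem for compound-Poisson laws with equivalent Lévy measures and bounded relative densities) that the laws $\P^{\theta^v}$ are mutually absolutely continuous across $v\in\v$, so the family of identities above must be simultaneously consistent on a common quasi-sure support. The \emph{main obstacle} is to exploit this simultaneous consistency, together with $Y\in L^q_G(\Omega_t)$ for $q>2$ (which by Proposition \ref{prop_characterization_of_Lpc+Lpg_d} produces a quasi-continuous version of $Y$), to force $Y^d\equiv 0$ quasi-surely and $c(v)$ independent of $v\in\v$. The intended route is to represent $Y^d$ as a compensated Poisson-type integral built from $L$ (Section \ref{sec_Poiss_random_d}) with some integrand $K(s,z)$, observe that the $\P^{\theta^v}$-compensators $\pi^{\theta^v}_s(dz)\,ds$ vary genuinely in $v$ whenever $|\v|>1$, and combine this with the regularity results for Poisson integrals from Section \ref{sec_quasi-cont_d} to force $K\equiv 0$.

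Once $Y_t=tc$ is deterministic, $\tilde X^d=X^d-tc$ is itself a $G$-Lévy symmetric martingale, and an induction via Itô's formula applied to $(\tilde X^d)^k$ for $k\ge 2$, using that $G$-stochastic integrals against a symmetric $G$-martingale remain symmetric $G$-martingales, pins down the moments $\int z^k v(dz)$ as $v$-independent for all $k$ up to the order permitted by the growth bound \eqref{eq_moment_ass_p_d}. The moment-determinacy of the measures in $\v$ (which share a common absolutely continuous structure with respect to $\pi$ and satisfy \eqref{eq_moment_ass_p_d}) then yields $\v=\{v\}$. The delicate technical point is the stability of the symmetric-$G$-martingale property under $G$-stochastic integration in the jump setting, which is what makes the induction close.
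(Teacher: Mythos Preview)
Your approach differs substantially from the paper's and, while the opening reduction (symmetric $G$-martingale $\Rightarrow$ classical $\P$-martingale for every $\P\in\mathfrak{P}$, then Doob--Meyer under each deterministic law $\P^{\theta^v}$) is sound, two steps do not close. The ``main obstacle'' of forcing $Y^d\equiv 0$ is left as a plan: nothing in the hypotheses ensures that the pure-jump part of an arbitrary finite-variation process $Y\in L^q_G$ admits a compensated Poisson-integral representation with some integrand $K$, and the results of Section~\ref{sec_quasi-cont_d} concern quasi-continuity of Poisson integrals, not vanishing of integrands. More fatally, the final moment argument cannot work as written: the bound \eqref{eq_moment_ass_p_d} only controls moments up to some \emph{finite} order $p>2$, so your induction pins down at most $\lfloor p\rfloor$ moments of each $v\in\v$. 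Finitely many moments never determine a measure, and the uniform equivalence to $\pi$ does not rescue this --- two distinct bounded densities against $\pi$ can share any prescribed finite list of moments. So even if you successfully reached ``$Y_t=tc$ deterministic'', the conclusion $\v=\{v\}$ would not follow. The stability of the symmetric-$G$-martingale property under stochastic integration, which you flag as the hinge of the induction, is also left unproven and is genuinely delicate already in the continuous setting.

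The paper avoids all of this by invoking the $G$-martingale representation theorem from \cite{moj2} (Theorem~25 and Proposition~26 there): since $\tilde X^d_T\in L^r_G(\Omega_T)$ for some $r>2$ (by Kunita's inequality on $X^d$ and the assumption $Y_t\in L^q_G$), it decomposes as a $G$-Brownian integral plus a compensated It\^o--L\'evy integral $\int_0^t\!\int_{\r0} K(s,z)\,L(ds,dz)-\int_0^t\sup_{v\in\v}\int_{\r0} K(s,z)\,v(dz)\,ds$ plus a non-increasing continuous $G$-martingale $K^c$. Finite variation of $\tilde X^d$ kills the Brownian term, and then the symmetry of $\tilde X^d$ forces, via the structure and uniqueness of that representation, both $K^c\equiv 0$ and $\v=\{v\}$ in one stroke. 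The external structural theorem carries the weight that your moment-matching attempt cannot.
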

\begin{proof}
	Assume that there exists such a process.  Fix $T>0$. Note that by the Kunita's inequality (see Corollary 4.4.24 in \cite{applebaum}) and the moment assumption in eq. \eqref{eq_moment_ass_p_d} we have that $ X^d_t\in L^s_G(\Omega_t)$ for $2<s<p$. Then $\tilde X^d_T \in L^r_G(\Omega_T)$ for some $r=\min\{s,q\}>2$. Note that and by Theorem 25 and Proposition 26 in \cite{moj2} we get that $\tilde X^d_t$ must be written as a sum of a stochastic integral w.r.t. a $G$-Brownian motion, a non-increasing continuous $G$-martingale $K^c$ and a \itolevy integral compensated by its mean. Since $\tilde X^d_t$ has a finite variation we deduce that the first summand is equal to $0$, hence $\tilde X^d_t=\int_0^t\int_{\r0}K(s,z)L(ds,dz)-\int_0^t\sup_{v\in\v}\int_{\r0}K(s,z)v(dz)ds+ K^c_t$. But the latter is a symmetric $G$-martingale iff $\v=\{v\}$ and $K^c\equiv 0$.
\end{proof}

\section{Appendix}
\subsection{Constraction of $g_v$}\label{ssec_g_v_d}
Take any measure $\mu\in\mathcal M(\R^{d})$ s.t. $\mu$ is a \levy measure absolutely continuous w.r.t. Lebesgue measure and $\mu(\r0)=\infty$. 

Fix a sequence $\{\epsilon_n\}_{n\geq 0}$ s.t. $\epsilon_0=\infty$ and $\epsilon\downarrow0$ as $n\to \infty$. 

Let $r_n:=\sup_{v\in\v}v(\{\epsilon_n<|z|\leq \epsilon_{n-1}\}),\ n=1,2,\ldots$. We know that $r_n<\infty$ (see p. 14 in \cite{Peng_levy}). By the assumptions on $\mu$ we may find a decreasing sequence $\{\eta_n\}_{n\geq 0}$ s.t. $\eta_0=\infty$ and $\mu(\{\eta_n<|z|\leq \eta_{n-1}\})=r_n,\ n=1,2,\ldots$. We introduce the notation
\[
	O_n:=\{z\in\r0\colon \epsilon_n<|z|\leq \epsilon_{n-1}\}\quad \textrm{and } 
	U_n:=\{z\in\r0\colon \eta_n<|z|\leq \eta_{n-1}\},\quad n=1,2,\ldots.
\]
Note that $v(O_n)\leq \mu(U_n),\ n=1,2,\ldots$ for every $v\in\v$.  Again by the properties of $\mu$ we may find a subset $U_n^v\subset U_n$ such that $v(O_n)=\mu(U_n^v)=:r_n^v$ for all $v\in\v$ and $n=1,2,\ldots$.

Since $v|_{O_n}/r^v_n$ and $\mu|_{U^v_n}/r^v_n$ are two probability measures and the second one is absolutely continuous w.r.t. Lebesgue measure, we may use the Knothe-Rosenblatt rearrangement (see for example \cite{Villiani}, p.8-9) to find a function $g_{v,n}\colon U_n^v\to O_n$ such that
\[
	\frac{v|_{O_n}(A)}{r^v_n}=\frac{\mu|_{U^v_n}(g^{-1}_{v,n}(A))}{r^v_n}\quad \forall\ A\in \B(O_n).
\]

It is now trivial that by putting $g_v:=g_{v,n}$ on every $U^v_n$ and $g_v\equiv 0$ outside $\bigcup_{n} U^v_n$ we get a function which transports measure $\mu$ onto $v$. Of course by the construction for each $\epsilon>0$ there exists an $\eta>0$ s.t.
\[
	\bigcup_{v\in\v}g^{-1}_v(B(0,\epsilon)^c)\subset B(0,\eta)^c.
\]
We may also take $\mu$ which integrates $|z|$.

\subsection{Proof of characterization of $L^1_G(\Omega)$}
Before we will go with the proof let us remind the properties of \cadlag modulus
\begin{lem}\label{lem_modulus_d}
	For any $\delta>0$ and a \cadlag function $x:[0,T]\to \R^d$ define the following \cadlag modulus
	\[
		\omega_x'(\delta):=\inf_{\pi}\max_{0<i\leq r} \sup_{s,t\in [t_{i-1},t_i[} |x(s)-x(t)|,
	\]
	where infinimum runs over all partitions $\pi=\{t_0,\ldots,t_r\}$ of the interval $[0,T]$ satisfying $0=t_0<t_1<\ldots<t_r=T$ and $t_i-t_{i-1}>\delta$ for all $i=1,2,\ldots,r$.
	Define also
		\begin{align*}
		w_x''(\delta):=&\sup_{\substack{t_1\leq t\leq t_2\\t_2-t_1\leq \delta}}\min\{ |x(s)-x(t_1)|,|x(t_2)-x(s)|\}.
	\end{align*}
	Then
	\begin{enumerate}
		\item 		$w_x''(\delta)\leq w_x'(\delta)$ for all $\delta>0$ and $x\in \d0$.
		\item For every $\epsilon>0$ and a subinterval $[\alpha,\beta[\subset [0,T]$ if  $x$ does not have any jumps of magnitude $>\epsilon$ in the interval $[\alpha,\beta[$ then
		\[
			\sup_{\substack{t_1,t_2\in[\alpha,\beta[\\|t_2-t_1|\leq\delta}} |x(t_1)-x(t_2)|\leq 2w''_x(\delta)+\epsilon\leq 2w'_x(\delta)+\epsilon.
		\]
		\item The function $x\mapsto w_x'(\delta)$ is upper semicontinuous for all $\delta>0$.
		\item  $\lim_{\delta\downarrow 0}\,w_x'(\delta)\downarrow 0$ for all $x\in \d0$.
	\end{enumerate}
\end{lem}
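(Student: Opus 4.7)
The four assertions are all classical facts about the Skorokhod \cadlag modulus (essentially Billingsley, Chapter 3), and my plan is to handle them in order, leveraging each as I go.

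For Part 1, I would argue directly from the definitions. Fix $\delta>0$ and a triple $t_1\leq s\leq t_2$ with $t_2-t_1\leq \delta$. Take any partition $\pi=\{0=t_0<\cdots<t_r=T\}$ satisfying $t_i-t_{i-1}>\delta$ for every $i$; since $t_2-t_1\leq\delta$ the open interval $(t_1,t_2]$ contains at most one partition point. Either all of $t_1,s,t_2$ lie in a common block $[t_{i-1},t_i[$, in which case both distances $|x(s)-x(t_1)|$ and $|x(t_2)-x(s)|$ are bounded by the oscillation on that block, or some $t_i\in(t_1,t_2]$ separates $s$ from exactly one of $t_1,t_2$, and again the oscillation on a single block of $\pi$ bounds the minimum. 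Taking the min and then the infimum over $\pi$ yields $w_x''(\delta)\leq w_x'(\delta)$.

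For Part 2, fix $t_1<t_2$ in $[\alpha,\beta[$ with $t_2-t_1\leq\delta$ and set
\[
s^*:=\sup\{s\in[t_1,t_2]\colon |x(s)-x(t_1)|\leq w_x''(\delta)\},
\]
with the convention $\sup\emptyset=t_1$. By the definition of $w_x''$, for every $s\in(t_1,t_2)$ with $|x(s)-x(t_1)|>w_x''(\delta)$ we must have $|x(t_2)-x(s)|\leq w_x''(\delta)$. Letting $s\downarrow s^*$ and using right-continuity gives $|x(t_2)-x(s^*)|\leq w_x''(\delta)$, while letting $s\uparrow s^*$ from inside the ``good'' set yields $|x(s^*-)-x(t_1)|\leq w_x''(\delta)$. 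Since the jump $|x(s^*)-x(s^*-)|$ belongs to $[\alpha,\beta[$ and is therefore bounded by $\epsilon$, the triangle inequality gives $|x(t_1)-x(t_2)|\leq 2w_x''(\delta)+\epsilon$; the boundary cases $s^*=t_1$ and $s^*=t_2$ only make the bound cleaner. Combining with Part 1 yields the second inequality.

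For Part 4, I would show that for every $\epsilon>0$ there exists a finite partition $\pi$ of $[0,T]$ with mesh $>0$ on which the oscillation of $x$ is everywhere $<\epsilon$. This is standard: iterate $\tau_0=0$, $\tau_{k+1}=\inf\{t>\tau_k\colon |x(t)-x(\tau_k)|\geq \epsilon\}$ (with the value $T$ if the set is empty). \cadlag implies this process stabilises at $T$ in finitely many steps; any $\delta$ strictly smaller than the minimal block length then forces $w_x'(\delta)\leq\epsilon$. Monotonicity of $\delta\mapsto w_x'(\delta)$ gives the limit.

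For Part 3, the Skorokhod topology has the feature that $y^n\to x$ iff there exist time-changes $\lambda_n\colon[0,T]\to[0,T]$ with $\|\lambda_n-\mathrm{id}\|_\infty\to 0$ and $\|y^n\circ\lambda_n-x\|_\infty\to 0$. Given $a>w_x'(\delta)$, pick a partition $\pi$ with mesh strictly greater than $\delta+\eta$ for some small $\eta$ whose blockwise oscillations are $<a-\rho$ for some $\rho>0$. For $n$ large, $\lambda_n^{-1}(\pi)$ is still a partition with mesh $>\delta$, and the blockwise oscillations of $y^n$ differ from those of $x$ by at most $2\|y^n\circ\lambda_n-x\|_\infty<\rho$, so $w_{y^n}'(\delta)<a$. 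This gives upper semicontinuity. The main technical nuisance here is checking that the adjusted partition $\lambda_n^{-1}(\pi)$ retains the mesh condition and that oscillation bounds transfer cleanly through the time-change; this is the only step that is not purely bookkeeping.
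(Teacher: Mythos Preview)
Your sketches are essentially correct and follow the standard arguments from Billingsley and Parthasarathy. The paper itself does not prove this lemma at all: it simply states that the four properties are standard and refers to Billingsley (Chapter~3, Lemma~1, equations (14.39) and (14.46)) for Parts~1, 3 and~4, and to Parthasarathy (Chapter~VII, Lemma~6.4) for Part~2. So you have supplied strictly more than the paper does.

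A few minor points worth tightening before you call this a proof rather than a proposal. In Part~2 the set $\{s\in[t_1,t_2]:|x(s)-x(t_1)|\le w_x''(\delta)\}$ always contains $t_1$, so your convention $\sup\emptyset=t_1$ is never invoked; more importantly, the case analysis at $s^*$ (whether $s^*$ lies in the good set, whether $s^*=t_1$ or $s^*=t_2$) deserves to be written out, since the left-limit argument only applies when $s^*>t_1$ is not itself in the good set. In Part~4 your stopping-time construction gives blockwise oscillation $<2\epsilon$ rather than $<\epsilon$, which is harmless but should be said. In Part~3 the existence of the margin $\eta>0$ comes from the fact that any witnessing partition has finitely many blocks, so its minimal block length is some $\delta'>\delta$; also, whether you transport the partition by $\lambda_n$ or by $\lambda_n^{-1}$ depends on which form of the Skorokhod convergence you start from, and the half-open interval structure has to be preserved by the (strictly increasing, continuous) time change---you flag this as the main nuisance, and it is indeed the only place where care is needed.
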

These properties are standard and might be found in \cite{billingsley} for properties 1, 3 and 4 (see Chapter 3, Lemma 1, eq. (14.39) and (14.46)) and \cite{Parthasarathy} for property 2 (see Lemma 6.4 in Chapter VII).
\begin{proof}[Proof of Proposition \ref{prop_lip_in_Lg_d}]
	Fix a random variable $Y\in C_{b,lip}(\Omega_T)$. For any $n\in\N$ define the operator $T^n\colon\d0\to \d0$ as
	\[
		T^n(\omega)(t):=\left\{ \begin{array}{ll}
\omega_{\frac{kT}{n}} & \textrm{if }t\in[\frac{kT}{n},\frac{(k+1)T}{n}[,\ k=0,1,\ldots,n-1.\\
\omega_T & \textrm{if }t=T.
\end{array} \right.
	\]
	Define $Y^n:=Y\circ T^n$. Then $Y^n$ depend only on $\{\omega_{{kT}/{n}}\}_{k=0}^{n}$ thus there exists a function $\phi^n\colon \R^{(n+1)\times d}\to\R$ such that
	\[
		Y^n(\omega)=\phi^n(\omega_0,\omega_{\frac{T}{n}},\ldots,\omega_{T}).
	\]
	By the boundedness and Lipschitz continuity of $Y$ we can easily prove that also $\phi^n$ must be bounded and Lipschitz continuous (all we have to do is to consider the paths, which are constant on the intervals $[kT/n,(k+1)T/n[$). Note however that
	\begin{align*}
		\GE[|Y-Y^n|]=\GE[|Y-Y\circ T^n|]\leq L\,\GE[d(X^T, X^T\circ T^n)\wedge 2K],
	\end{align*}
	where $L>0$ and $K\cdot L$ are respectively a Lipschitz constant and bound of $Y$, $X^T$ is a canonical process, i.e. our $G$-\levy process, stopped at time T and $d$ is the Skorohod metric.
	
Fix now $\epsilon>0$. Then for any $\omega\in \Omega_T$ we have that a number of jumps with the magnitude $>\epsilon$ is finite. Fix $\omega\in\Omega_T$ and let $0<r_1<\ldots<r_{m-1}< T$ be the times of jumps with magnitude $>\epsilon$. We can possibly have such a jump also at $r_m:=T$. We can choose $n$ big enough such that $r_{i+1}-r_i\geq {T}/{n}$ for $i=0,\ldots, m-1$. Define $A_T^{n,\epsilon}$ as a set of all $\omega\in \Omega_T$ for which the minimal distance between jumps of magnitude $>\epsilon$ is larger or equal to ${T}/{n}$. We want to have an estimate of the Skorohod metric for $\omega\in A^{n,\epsilon}_T$. To obtain it we construct the piecewise linear function $\lambda^n$ as follows $\lambda^n(0)=0$, $\lambda^n(T)=T$, for each $k=1,\ldots,n-1$ define
	\[
		\lambda^n\left(\frac{kT}{n}\right) :=\left\{ \begin{array}{ll}
 \frac{kT}{n} & \textrm{if } r_i\notin \left] \frac{(k-1)T}{n},\frac{kT}{n}\right],\ i=1,\ldots,m ,\\
r_i & r_i\in \left] \frac{(k-1)T}{n},\frac{kT}{n}\right],\ i=1,\ldots,m.
\end{array} \right.
	\]
	Moreover, let $\lambda^n$ be linear between these nods. By the construction $\|\lambda^n-Id\|_{\infty}\leq 2T/n$. Define $t_k:=\lambda^n({kT}/{n})$ for $k=0,\ldots,n$. Note that $\omega$ does not have any jump of magnitude $>\epsilon$ on $[t_k,t_{k+1}[$.
	Then by definition of the Skorohod metric and property 2 in Lemma \ref{lem_modulus_d} we have	
	\begin{align*}
		d(\omega,T^n(\omega))\wedge 2K&=\left(\inf_{\lambda\in\Lambda}\max\{\|\lambda-Id\|_{\infty},\|T^n(\omega)-\omega\circ\lambda\|_{\infty}\}\right)\wedge2K\\
		&\leq \left(\|\lambda^n-Id\|_{\infty}+\|T^n(\omega)-\omega\circ\lambda^n\|_{\infty}\}\right)\wedge2K\\
		&\leq \left(\frac{2T}{n}+\max_{k=0,\ldots,n-1} \sup_{s,t\in[t_k,t_{k+1}[} |\omega(s)-\omega(t)|\right)\wedge 2K\\
		&\leq \left[\frac{2T}{n}+2w'_{\omega}\left(\frac{2T}{n}\right)+\epsilon\right]\wedge 2K.
	\end{align*}
	Thus we can define yet another bound $K^{n,\epsilon}$ as
    \[
        K^{n,\epsilon}(\omega):=\left\{
                       \begin{array}{ll}
                         \left(\frac{2T}{n}+2w'_{\omega}\left(\frac{2T}{n}\right)+\epsilon\right)\wedge 2K, & \textrm{if }\omega\in A_T^{n,\epsilon},\\
                         2K, & \textrm{if }\omega\in \Omega_T\setminus A_T^{n,\epsilon},\\
                       \end{array}
                     \right.
    \]
    Then $d(X^T, X^T\circ T^n)\wedge 2K\leq K^{n,\epsilon} $ and thus $\GE[d(X^T, X^T\circ T^n)\wedge 2K]\leq \GE[K^{n,\epsilon}]$. We also have $K^{n,\epsilon}\downarrow \epsilon$ as $n\to\infty$ on every $A^{m,\epsilon}_T$, $m$ and $\epsilon$ are fixed. This follows from property 4 in Lemma \ref{lem_modulus_d}. Moreover we claim than $K^{n,\epsilon}$ is upper semi-continuous on every set $A_T^{m,\epsilon}$ for $m\leq n$ and a fixed $\epsilon$. Firstly, note that the set $A^{m,\epsilon}_T$ is closed under the Skorohod topology. This is clear from the definition of the set: if $\{\omega^k\}_k\subset A^{m,\epsilon}_T$ then the distance between the jumps of magnitude $>\epsilon$ is $\geq {T}/{m}$ for each $k$. But if $\omega^k\to\omega$ then also $\omega$ must satisfy this property\footnote{Note that it is not a problem for us that a jump of magnitude $>\epsilon$ for $\omega^k$ may have jump of magniture exactly $\epsilon$ in the limit, as then the distance between "large" jumps would only increase.} and hence it belong to $A^{m,\epsilon}_T\subset A^{n,\epsilon}_T$. Now note that by Lemma \ref{lem_modulus_d}, property 3, we have that $\omega\mapsto\left({2T}/{n}+2w'_{\omega}\left({2T}/{n}+\epsilon\right)\right)\wedge 2K$ is upper semi-continuous as a minimum of two upper semi-continuous functions and thus
        \begin{align*}
            \limsup_{k\to\infty} K^{n,\epsilon}(\omega^k)&= \limsup_{k\to\infty}  \left(\frac{2T}{n}+2w'_{\omega^k}\left(\frac{2T}{n}\right)+\epsilon\right)\wedge 2K\\&\leq \left(\frac{2T}{n}+2w'_{\omega}\left(\frac{2T}{n}\right)+\epsilon\right)\wedge 2K= K^{n,\epsilon}(\omega).
        \end{align*}
Thus $K^{n,\epsilon}$ is upper semi-continuous on each closed set $A^{m,\epsilon}_T,\ m\leq n$. Then we have that

We also claim that the sets $A_T^{m,\epsilon}$ are 'big' in the sense, that the capacity of the complement is decreasing to $0$. Note that
\[
	(A_T^{m,\epsilon})^c=\{\omega\in\Omega_T\colon  \exists\, t,s\leq T,\ |t-s|<\frac{T}{m} \textrm{ and }|\Delta \omega_t|>\epsilon,\ |\Delta \omega_s|>\epsilon\}.
\]
For any $\theta \in \a^{\u}_{0,T}$ define the set
\[
	(A_T^{m,\epsilon,\theta})^c=\{\omega\in\Omega_T\colon  \exists\, t,s\leq T,\ |t-s|<\frac{T}{m} \textrm{ and }|\Delta B^{0,\theta}_t(\omega)|>\epsilon,\ |\Delta B^{0,\theta}_s(\omega)|>\epsilon\}.
\]
We want to use the definition of $c$ and the fact that $\P^{\theta}$ is the law of $B.^{0,\theta}$. We remind that $N_t=\int_{\r0}zN(]0,t],dz)$. If $B.^{0,\theta}$ has a jump of magnitude greater than $\epsilon$ then $N$ must have also a jump at $t$ of magnitude greater then $\eta^{\epsilon}>0$ (compare with Remark \ref{rem_g_v_d} and Subsection \ref{ssec_g_v_d}). 
\begin{align*}
	c\left[(A^{m,\epsilon}_T)^c\right]&=\sup_{\theta\in\a^{\u}_{0,T}} \P^{\theta}\left[(A^m_T)^c\right]=\sup_{\theta\in\a^{\u}_{0,T}} \P_0\left[(A^{m,\theta}_T)^c\right]\\
	&\leq \P_0( \exists\, t,s\leq T,\ |t-s|<\frac{T}{m} \textrm{ and }|\Delta N_t|>\eta^{\epsilon} \textrm{ and } |\Delta N_s|>\eta^{\epsilon})=:\P(B^{m,\epsilon}).
\end{align*}
$N$ has ($\P_0-a.a.$) paths \cadlag  and hence  $B^{m,\epsilon}\supset B^{m+1,\epsilon}$ and $\P_0(\bigcap_{m=1}^{\infty}B^{m,\epsilon})=0$. Hence, by continuity of probability we have $\P_0(B^{m,\epsilon})\to 0$ and consequently for every $\epsilon>0$ one has 	$c\left[(A^{m,\epsilon}_T)^c\right]\to 0$ as $m\to\infty$.

Note that we will prove the assertion of our proposition if we use the following lemma (proof below).
\
\begin{lem}\label{lem_convergence_usc_d}
    For every $\epsilon>0$ let $\{X_{n,\epsilon}\}_n$ be a sequence of non-negative uniformly bounded random variables on $\Omega_T$ such that there exists a sequence of closed sets $(F_{m,\epsilon})_m$ having the following properties
    \begin{enumerate}
    \item $c(F_{m,\epsilon}^c)\to 0$ as $m\to\infty$.
    \item $X_{n,\epsilon}\downarrow \epsilon$ on every $F_{m,\epsilon}$.
    \item  $X_{n,\epsilon}$ is upper semi-continuous on every $F_{m,\epsilon}\ m\leq n$. 
\end{enumerate}     
Then for all $\epsilon>0$ there exists $N(\epsilon)$ such that for all $n>N(\epsilon)$ we have $\GE[X_{n,\epsilon}]<2\epsilon$.
\end{lem}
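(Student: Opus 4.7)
Let $M$ be a uniform bound for $\{X_{n,\epsilon}\}_n$. By sub-additivity of $\GE$ one has
$$\GE[X_{n,\epsilon}]\leq \GE[X_{n,\epsilon}\I_{F_{m,\epsilon}}]+\GE[X_{n,\epsilon}\I_{F_{m,\epsilon}^c}] \leq \GE[X_{n,\epsilon}\I_{F_{m,\epsilon}}]+ Mc(F_{m,\epsilon}^c)$$
for every $m\leq n$. Hypothesis~1 lets us fix $m_0=m_0(\epsilon)$ so that $Mc(F_{m_0,\epsilon}^c)<\epsilon/4$, and then the whole task reduces to showing $\limsup_n\GE[X_{n,\epsilon}\I_{F_{m_0,\epsilon}}]\leq \epsilon$.

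To attack this by a Dini/level-set argument, I would first observe that the cutoff $\tilde X_n:=X_{n,\epsilon}\I_{F_{m_0,\epsilon}}$ is upper semi-continuous on the whole of $\Omega_T$ for every $n\geq m_0$: on the open set $F_{m_0,\epsilon}^c$ it vanishes, while at any $\omega_0\in F_{m_0,\epsilon}$ the non-negativity of $X_{n,\epsilon}$ combined with the USC of $X_{n,\epsilon}|_{F_{m_0,\epsilon}}$ (hypothesis~3) gives $\limsup_{\omega_k\to\omega_0}\tilde X_n(\omega_k)\leq \tilde X_n(\omega_0)$, whether the approximating points lie inside or outside $F_{m_0,\epsilon}$. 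Consequently, for any $\delta>0$ the level sets
$$G_n^{\delta}:=\{\tilde X_n\geq \epsilon+\delta\}\subset F_{m_0,\epsilon}$$
are closed, and by hypothesis~2 (\,$\tilde X_n\downarrow \epsilon$ on $F_{m_0,\epsilon}$ and $\tilde X_n\equiv 0$ elsewhere\,) the sequence $(G_n^{\delta})_{n\geq m_0}$ is decreasing with empty intersection.

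Now the main input is the relative compactness of $\mathfrak{P}$ proved by Ren in \cite{Ren}, combined with Theorem~12 of \cite{Denis_function_spaces} (already invoked in the proof of Theorem~\ref{tw_integral_qc_on_R_d}): for decreasing closed sets with empty intersection, $c(G_n^{\delta})\downarrow 0$. Splitting once more
$$\GE[\tilde X_n]\leq \GE[\tilde X_n\I_{(G_n^{\delta})^c}]+\GE[\tilde X_n\I_{G_n^{\delta}}]\leq (\epsilon+\delta)+Mc(G_n^{\delta}),$$
choosing $\delta=\epsilon/4$ and $N(\epsilon)\geq m_0$ so that $Mc(G_n^{\delta})<\epsilon/4$ for $n>N(\epsilon)$, one collects the estimates to obtain
$$\GE[X_{n,\epsilon}]<(\epsilon+\epsilon/4)+\epsilon/4+\epsilon/4=7\epsilon/4<2\epsilon.$$
The main obstacle is the global upper semi-continuity of the cutoff $\tilde X_n$ (needed so that $G_n^\delta$ is closed and the Denis--Hu--Peng capacity theorem applies); this step crucially uses the non-negativity of $X_{n,\epsilon}$ at the boundary points of $F_{m_0,\epsilon}$. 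Everything else is routine bookkeeping with sub-additivity and the relative compactness of $\mathfrak{P}$.
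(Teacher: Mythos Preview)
Your argument is correct and uses the same key mechanism as the paper's proof: the superlevel sets of the truncated random variables are closed, decrease to $\emptyset$, and hence their $c$-capacity tends to zero by the relative compactness of $\mathfrak{P}$ together with the Denis--Hu--Peng result on Choquet capacities. The paper packages this slightly differently: instead of splitting via sub-additivity and working with a single threshold $\epsilon+\delta$, it writes $\GE[X_{n,\epsilon}]$ through the layer-cake formula $\sup_{\theta}\int_0^M \P^{\theta}(X_{n,\epsilon}\geq t)\,dt$, isolates the contribution from $[0,\epsilon]$, and then pushes the remaining tail integral to zero by monotone convergence applied to $t\mapsto c(\{X_{n,\epsilon}|_{F_{m,\epsilon}}\geq t\})$. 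Your direct splitting avoids the layer-cake step and the monotone-convergence argument, which makes it a little more transparent; the paper's route, on the other hand, does not need the observation that the cutoff $\tilde X_n$ is globally upper semi-continuous (it only needs closedness of the level sets within $F_{m,\epsilon}$, which follows immediately from hypothesis~3 and the closedness of $F_{m,\epsilon}$). Either way the substance is identical.
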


Applying this lemma to our sequence $\{K^{n,\epsilon}\}_n$ together with the closed sets $(A_T^{m,\epsilon})_m$ we get that for all $n$ big enough we have
\[
    \GE[|Y^n-Y|]\leq L\GE[ d(X^T,X^T\circ T^n)\wedge 2K]\leq L\GE[K^{n,\epsilon}]<2L\epsilon.\qedhere
\] 
\end{proof}

\begin{proof}[Proof of Lemma \ref{lem_convergence_usc_d}]
    Fix $\epsilon>0$. Let $M$ be the bound of all $X_{n,\epsilon}$. By the representation of the sublinear expectation we have
\begin{align*}
    \GE[X_{n,\epsilon}]&=\sup_{\theta\in\a^{\u}_{0,T}}\E^{\P^{\theta}}\left[X_{n,\epsilon}\right]=\sup_{\theta\in\a^{\u}_{0,T}}\int_0^M \P^{\theta}( X_{n,\epsilon}\geq t)d\leq\epsilon+\sup_{\theta\in\a^{\u}_{0,T}}\int_{\epsilon}^M \P^{\theta}( X_{n,\epsilon}\geq t)dt
     \\&=\epsilon+\sup_{\theta\in\a^{\u}_{0,T}}\int_{\epsilon}^M \P^{\theta}[ (\{X_{n,\epsilon}\geq t\}\cap F_{m,\epsilon})\cup (\{X_{n,\epsilon}\geq t\}\cap F_{m,\epsilon}^c)]dt\\
&\leq\epsilon+\sup_{\theta\in\a^{\u}_{0,T}}\int_{\epsilon}^M \P^{\theta}( \{ X_{n,\epsilon}|_{F_{m,\epsilon}}\geq t\}\cup F_{m,\epsilon}^c)dt
\leq\epsilon+\sup_{\theta\in\a^{\u}_{0,T}}\int_{\epsilon}^M \left[c(  X_{n,\epsilon}|_{F_{m,\epsilon}}\geq t)+c(F_{m,\epsilon}^c)\right]dt\\
&\leq\epsilon+\int_{\epsilon}^M c(  X_{n,\epsilon}|_{F_{m,\epsilon}}\geq t)dt+Mc(F_{m,\epsilon}^c).
\end{align*}
By the first property of sets $F_{m,\epsilon}$ we can choose $m$ big enough so that $c(F_{m,\epsilon}^c)\leq \frac{\epsilon}{2M}$.  Choose $n\geq m$. By the upper semi-continuity of $X_{n,\epsilon}$ on $F_{m,\epsilon}$ we get that each $\{X_{n,\epsilon}|_{F_{m,\epsilon}}\geq t\}$ is closed in the subspace topology on $F_{m,\epsilon}$. But $F_{m,\epsilon}$ is also a closed set in the Skorohod topology, thus $\{X_{n,\epsilon}|_{F_{m,\epsilon}}\geq t\}$ is also closed in it. Moreover, due to monotone convergence to $\epsilon$ on $F_{m,\epsilon}$ we have that $\{X_{n,\epsilon}|_{F_{m,\epsilon}}\geq t\}\downarrow \emptyset$ for every $t>\epsilon$ as $n\uparrow \infty$. Thus by Lemma 7 in \cite{Denis_function_spaces} we get that $c(X_{n,\epsilon}|_{F_m}\leq t)\downarrow0$ for every $t>\epsilon$ as $n\uparrow \infty$ and we get the assertion of the lemma by applying monotone convergence theorem for the Lebesgue integral and choosing $n\geq m$ big enough, so that the integral is less then $\frac{\epsilon}{2}$. Thus
\[
	0\leq \GE[X_{n,\epsilon}]\leq 2\epsilon\quad \textrm{for } n \textrm{ big enough}.\qedhere
\]
\end{proof}

\bibliographystyle{plain}
\bibliography{biblio_phd}
\end{document}